\DeclareMathAlphabet{\mathpzc}{OT1}{pzc}{m}{it}
\newtheorem{theorem}{Theorem}[section]
\newtheorem{lemma}[theorem]{Lemma}
\newtheorem{proposition}[theorem]{Proposition}
\newtheorem{corollary}[theorem]{Corollary}
\newtheorem{definition}[theorem]{Definition}
\newtheorem{question}[theorem]{Question}
\theoremstyle{remark}
\newtheorem{remark}[theorem]{Remark}
\newtheorem{example}[theorem]{Example}
\newcommand{\bN}{\mathbb{N}}
\newcommand{\bQ}{\mathbb{Q}}
\newcommand{\bP}{\mathbb{P}}
\newcommand{\bZ}{\mathbb{Z}}
\newcommand{\cF}{\mathcal{F}}
\newcommand{\cG}{\mathcal{G}}
\newcommand{\cO}{\mathcal{O}}
\newcommand{\Exc}{{\rm Exc}}
\newcommand{\Img}{{\rm Im}}
\newcommand{\Supp}{{\rm Supp}}
\def\<{\langle}
\def\>{\rangle}
\title{Positivity of anticanonical divisors in algebraic fibre spaces}
\author{Chi-Kang Chang}
\address{Department of Mathematics, National Taiwan University, Taipei, 106, Taiwan}
\email{d08221001@ntu.edu.tw}
\keywords{anticanonical divisor, weakly positivity theorem, asymptotic base locus, anti-canonical Iitaka dimension}
\subjclass[2010]{14D06,14E30}
\begin{document}
\maketitle
\begin{abstract}
Let $f:X\rightarrow Y$ be an algebraic fibre space between normal projective varieties and $F$ be a general fibre of $f$. We prove an Iitaka-type inequality $\kappa(X,-K_X)\leq \kappa(F,-K_F)+\kappa(Y,-K_Y)$ under some mild conditions. We also obtain results relating to the positivity of $-K_X$ and $-K_Y$.
\end{abstract}

\section{Introduction}
Given a surjective projective morphism $f:X\rightarrow Y$  between normal projective varieties, a natural and important problem is to compare the canonical divisors of $X$ and $Y$. In particular, the well-known Iitaka conjecture asserts that $$ \kappa(X) \ge \kappa(F)+\kappa(Y),$$
where $F$ denotes a general fibre. The main purpose of the conjecture is to relate the "positivities" of $K_X, K_Y$ and $K_F$. Additionally, it is also very natural and important to compare the positivities of $-K_X$ and $-K_Y$ especially in the case of Fano varieties and related varieties.

To study the above problems, recall that we have the weakly positivity theorem developed and generalized by \cite{C,F78,F17,K81,V}. These theorems show that for sufficiently divisible positive integer $m$, the sheaf $f_*\cO_X(m(K_{X/Y}))$ is a weakly positive sheaf in the sense of \cite{EG} (or \cite{V}), where $K_{X/Y}=K_X-f^*K_Y$. Thus, we should expect that the positivity of $K_Y$ will affect the positivity of $K_X$. Consequently, we should expect the positivity of $-K_X$ will affect the positivity of $-K_Y$.

The main purpose of this paper is to study the aspects of positivity of the anti-canonical divisors in algebraic fibre spaces. The main theorem below can be thought of as an analog of the Iitaka conjecture of the anti-canonical Iitaka dimension.

\begin{theorem}
[cf. Theorem \ref{antiK}] Let $f:X\rightarrow Y$ be an algebraic fibre space between normal projective $\bQ$-Gorenstein varieties, and $F$ be a general fibre of $f$. Suppose $X$ has at worst klt singularities, and $-K_X$ is effective with stable base locus $\textbf{B}(-K_X)$ which does not dominate $Y$. Then we have
\begin{align*}
    \kappa(X,-K_X)\leq \kappa(F,-K_F)+\kappa(Y,-K_Y).
\end{align*}
\end{theorem}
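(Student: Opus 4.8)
The plan is to push everything down to $Y$ and deduce the inequality from the growth rate of spaces of sections. Fix $m$ sufficiently divisible so that $-mK_X$ and $-mK_Y$ are Cartier and $m(-K_X)$ computes $\kappa(X,-K_X)$. From $-mK_X\sim -mK_{X/Y}+f^*(-mK_Y)$, the projection formula, and $f_*\cO_X=\cO_Y$,
\[
 H^0\!\big(X,-mK_X\big)=H^0\!\big(Y,\ \cE_m\otimes\cO_Y(-mK_Y)\big),\qquad \cE_m:=f_*\cO_X(-mK_{X/Y}).
\]
The hypothesis on $\mathbf B(-K_X)$ enters twice. Restricting stable base loci to a general fibre gives $\mathbf B(-K_F)\subseteq\mathbf B(-K_X)\cap F=\emptyset$, so $-K_F$ is semiample and $h^0(F,-mK_F)$ is, along a suitable progression of $m$, a polynomial of degree $\kappa(F,-K_F)$; and over the open locus of $Y$ missed by $f(\mathbf B(-K_X))$, cohomology and base change give $\rank\cE_m=h^0(F,-mK_F)$ for such $m$.

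The crux is then the estimate
\[
 h^0\!\big(Y,\ \cE_m\otimes\cO_Y(-mK_Y)\big)\ \leq\ (\rank\cE_m)\cdot h^0\!\big(Y,-mK_Y\big)\cdot m^{o(1)},
\]
i.e. that $\cE_m=f_*\cO_X(-mK_{X/Y})$ is no more positive, measured against $-K_Y$, than a rank-$(\rank\cE_m)$ trivial sheaf, up to a subpolynomial error (a polynomial factor would not be tolerable in the limit below, so some genuine positivity input is needed). The route I would take is the relative anticanonical (Iitaka) fibration $g\colon X\to Z$ of $-K_X$ over $Y$, which exists after shrinking $Y$ to the locus avoided by $f(\mathbf B(-K_X))$ and taking models, the delicate point being to preserve klt-ness and not change $\kappa(X,-K_X)$ — harmless since $-K_X$ is then relatively semiample. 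One obtains $-K_X\sim_{\bQ}g^*A$ with $A$ relatively ample over $Y$, $\dim Z=\dim Y+\kappa(F,-K_F)$, $\kappa(X,-K_X)=\kappa(Z,A)$, and $K$-trivial $g$-fibres; the Fujino--Mori canonical bundle formula then writes $A\sim_{\bQ}-K_Z-B-M$ with $B\geq 0$ and $M$ nef, whence $\kappa(X,-K_X)\leq\kappa(Z,-K_Z)$ and the general fibre of $Z\to Y$ has big anticanonical class. This reduces the problem to the case $-K_F$ big — birationally, a weak Fano fibration over $Y$ — and it is here that I expect the main difficulty: one must bound the positivity of $f_*\cO_X(-mK_{X/Y})$ against $-K_Y$, for which the natural tools are Viehweg's and Koll\'ar's weak-positivity theorems together with the relative-duality identity $f_*\cO_X(-mK_{X/Y})\cong\big(R^{\dim F}f_*\omega_{X/Y}^{\otimes(m+1)}\big)^{\vee}$, arranged so as to show that $\cE_m\otimes\cO_Y(mK_Y)$ is weakly negative in a sufficiently effective form. (Note also that $H^0(X,-mK_X)\neq 0$ then forces $H^0(Y,-mK_Y)\neq 0$ via the displayed identity, so $\kappa(Y,-K_Y)\geq 0$ and the right-hand side of the theorem is finite.)

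Granting the estimate, the conclusion is formal: applying $\limsup_m\tfrac{1}{\log m}\log(-)$ to the identity for $h^0(X,-mK_X)$ and using subadditivity of $\limsup$,
\[
 \kappa(X,-K_X)\leq\limsup_m\frac{\log\rank\cE_m}{\log m}+\limsup_m\frac{\log h^0(Y,-mK_Y)}{\log m}=\kappa(F,-K_F)+\kappa(Y,-K_Y),
\]
the first term being $\kappa(F,-K_F)$ by the semiampleness of $-K_F$ established above. Thus the entire weight of the argument sits in the weak Fano fibration case of the positivity estimate for $\cE_m$; the rest is assembling standard facts about base loci, Iitaka dimensions, and the canonical bundle formula.
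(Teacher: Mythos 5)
There is a genuine gap: everything has been reduced to the displayed ``crux'' estimate $h^0(Y,\cE_m\otimes\cO_Y(-mK_Y))\leq(\rank\cE_m)\cdot h^0(Y,-mK_Y)\cdot m^{o(1)}$, and that estimate is never proved — you yourself flag it as ``the main difficulty'' and only sketch a route. Worse, the route as sketched cannot work, because it never makes the hypothesis on $\textbf{B}(-K_X)$ enter the estimate itself. Example 1.7 of the paper ($X=\bP_Y(\cO\oplus\cO(-K_Y-D))$ over a curve of genus $g\geq 2$) is already a ``weak Fano fibration'' of exactly the type your relative Iitaka fibration reduction produces: $-K_F$ is semiample, $\rank\cE_m=h^0(F,-mK_F)$, yet $h^0(Y,-mK_Y)=0$ while $h^0(X,-mK_X)$ grows like $m^2$, so the estimate fails outright there. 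Thus the reduction to the relatively big/semiample case does not isolate the difficulty; it lands you in a case where the claimed inequality can be false, and the only distinguishing feature — $\textbf{B}(-K_X)$ dominating $Y$ or not — plays no role in the weak-positivity/relative-duality argument you propose. Note also that the tools you name point the wrong way: Viehweg/Koll\'ar weak positivity bounds the positivity of pushforwards of (log-)canonical-type sheaves from \emph{below}; dualizing to get ``weak negativity'' of $\cE_m$ does not give an upper bound on $h^0(Y,\cE_m\otimes\cO_Y(-mK_Y))$ with a subpolynomial error, and there is no general weak positivity for $R^{\dim F}f_*\omega_{X/Y}^{\otimes(m+1)}$ with $m\geq 2$ anyway. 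Even the parenthetical claim that nonvanishing of $H^0(X,-mK_X)$ forces $H^0(Y,-mK_Y)\neq 0$ is, once unwound, precisely the paper's Theorem \ref{eff}, which is a nontrivial theorem in its own right.

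For comparison, the paper's proof takes a different and more structural route in which the base-locus hypothesis is used at every stage: (i) it twists $\Delta$ by a general member of $|L|$ avoiding the generic fibre direction so that $f:(X,\Delta+L')\rightarrow Y$ becomes a klt-trivial fibration, and invokes Ambro's theorem that the moduli $\bQ$-\textbf{b}-divisor is nef and abundant to prove $-K_Y-D$ is $\bQ$-effective (Theorem \ref{eff}); (ii) it proves an injectivity theorem (Theorem \ref{kd0}): when $\kappa(Y,-K_Y-D)=0$, restriction $\bigoplus_m H^0(X,\lfloor mL\rfloor)\rightarrow\bigoplus_m H^0(F,\lfloor mL_F\rfloor)$ is injective, via a blow-up of a fibre and Proposition \ref{egeff}; (iii) the general case is handled by passing to the Iitaka fibration of $-K_Y-D$ on the base $Y$ (not the relative Iitaka fibration of $-K_X$ over $Y$) and applying the easy addition formula together with step (ii) on the fibres of that fibration. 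If you want to salvage your pushforward framework, you would need to prove your crux estimate using an input of the strength of Ambro's canonical bundle formula for the klt-trivial structure furnished by the hypothesis on $\textbf{B}(L)$; as written, the proposal assumes the hardest part.
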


Note that the strict inequality in Theorem 1.1 can happen:

\begin{example}
Let $X$ be an elliptic K3 surface, with a morphism $f:X\rightarrow\bP^1$ whose general fibres are smooth elliptic curves. Then $0=\kappa(X,-K_X)< \kappa(F,-K_F)+\kappa(Y,-K_Y)=1$, which shows that $X$ is an example of strict inequality in Theorem 1.1.
\end{example}

The proof of Theorem 1.1 relies on the following theorem:

\begin{theorem}
[cf. Theorem \ref{eff}] Let $f:X\rightarrow Y$ be algebraic fibre space between normal projective varieties such that $Y$ is $\bQ$-Gorenstein. Let $\Delta$ an effective $\bQ$-Weil divisor on $X$ such that $(X,\Delta)$ is klt, and $D$ a $\bQ$-Cartier divisor on $Y$. Suppose that $-(K_X+\Delta)-f^*D$ is $\bQ$-effective with the stable base locus $\textbf{B}(-(K_X+\Delta)-f^*D)$ that does not surject to $Y$. Then $-K_Y-D$ is $\bQ$-effective.
\end{theorem}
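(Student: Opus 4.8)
The plan is to pull everything back to the generic fibre, trade the hypothesis "$-(K_X+\Delta)-f^{*}D$ is $\bQ$-effective" for an honest effective divisor that makes $f$ a relatively log-trivial fibration, and then feed this into the weak positivity theorem of \cite{C,F78,F17,K81,V} (equivalently, the canonical bundle formula). Set $L:=-(K_X+\Delta)-f^{*}D$. Since $\Bs(L)$ does not surject onto $Y$, there is a dense open $Y^{\circ}\subseteq Y$ with $\Bs(L)\cap f^{-1}(Y^{\circ})=\emptyset$, so $L$ is semiample over $Y^{\circ}$; restricting to a general fibre $F$ gives that $L|_{F}\sim_{\bQ}-(K_F+\Delta_F)$ is semiample and $(F,\Delta_F)$ is klt. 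Choosing $m_0$ with $\cO_X(m_0L)$ globally generated over $f^{-1}(Y^{\circ})$ and a general member $G_0\in|m_0L|$, the divisor $G:=\tfrac1{m_0}G_0\sim_{\bQ}L$ is effective, $(X,\Delta+G)$ is klt over $Y^{\circ}$ (its horizontal part has small coefficients and is smooth over $Y^{\circ}$ by Bertini), and $K_{X/Y}+\Delta+G\sim_{\bQ}-f^{*}(K_Y+D)$.

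Next I would take a log resolution $\mu\colon X'\to X$ of $(X,\Delta+G)$ which also resolves $f$, and pass to $(X',\Delta'+G')$ with $K_{X'}+\Delta'+G'=\mu^{*}(K_X+\Delta+G)+(\text{exceptional})$. Because $(X,\Delta+G)$ is klt over $Y^{\circ}$, every component of $\Delta'+G'$ of coefficient $\geq1$, and the whole non-klt locus, are $(f\circ\mu)$-vertical; discarding and adjusting these vertical divisors in the usual way, I may assume $X$ smooth, $f$ prepared, $(X,\Delta+G)$ sub-klt, and $K_{X/Y}+\Delta+G\sim_{\bQ}-f^{*}(K_Y+D)$ relatively $\bQ$-trivial.

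Now I would apply weak positivity: for $m$ sufficiently divisible, $f_{*}\cO_X(m(K_{X/Y}+\Delta+G))$ is weakly positive; since $m(K_{X/Y}+\Delta+G)\sim_{\bQ}-mf^{*}(K_Y+D)$ and $f_{*}\cO_X=\cO_Y$, the projection formula identifies this direct image with $\cO_Y(-m(K_Y+D))$. Hence $-(K_Y+D)$ is nef, in particular pseudo-effective. Equivalently, the canonical bundle formula for the klt-trivial fibration $(X,\Delta+G)\to Y$ gives $-(K_Y+D)\sim_{\bQ}B_Y+M_Y$ with $B_Y\geq0$ the (genuinely effective) discriminant and $M_Y$ the moduli part.

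The remaining, and hardest, step is to upgrade this to $\bQ$-effectivity: one must show $M_Y$ is $\bQ$-effective. When $\dim Y=1$ this is immediate, as a $\bQ$-divisor of non-negative degree on a curve is $\bQ$-effective; in general I would invoke $b$-semiampleness of the moduli part in the range at hand, or combine weak positivity with a non-vanishing argument on $Y$, using that $G$ is a genuine effective divisor via the inclusion $f_{*}\cO_X(m(K_{X/Y}+\Delta))\hookrightarrow f_{*}\cO_X(m(K_{X/Y}+\Delta+G))\cong\cO_Y(-m(K_Y+D))$. I expect this to be the main obstacle: weak positivity and fibrewise semiampleness only see nefness/pseudo-effectivity, so the genuinely $\bQ$-effective (not merely pseudo-effective) nature of the hypothesis must be exploited precisely here. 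A secondary, more technical point is the bookkeeping in the second step — ensuring that after the log resolution all boundary coefficients $\geq1$ and the whole non-klt locus are $f$-vertical, so that the vertical-correction forms of weak positivity and of the canonical bundle formula really apply.
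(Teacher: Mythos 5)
Your first half runs parallel to the paper's argument: replacing $L$ by a general effective member (using that $\textbf{B}(L)$ does not dominate $Y$) so that $B:=\Delta+L$ is effective and $(X,B)$ is sub-klt over the generic point of $Y$, and then reading $K_X+B\sim_{\bQ}f^*(-D)$ through the canonical bundle formula, with $-K_Y-D\sim_{\bQ}M_Y+B_Y$ and $B_Y\geq 0$ because $B\geq 0$. But the proof is not complete, and the gap is exactly where you place it yourself: you never actually prove that the moduli part $M_Y$ is $\bQ$-effective. The two tools you gesture at do not close it. Weak positivity of $f_*\cO_X(m(K_{X/Y}+\Delta+G))\cong\cO_Y(-m(K_Y+D))$ only yields pseudo-effectivity of $-K_Y-D$ (your intermediate claim that it is \emph{nef} is incorrect: weak positivity of a line bundle is equivalent to pseudo-effectivity), and pseudo-effectivity is strictly weaker than the asserted $\kappa(Y,-K_Y-D)\geq 0$; this part of your argument only re-derives Theorem \ref{pe}. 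And b-semiampleness of the moduli b-divisor is a conjecture in general (known, e.g., in relative dimension one), so "invoke $b$-semiampleness in the range at hand" is not available for an arbitrary fibre space, while the alternative "non-vanishing argument on $Y$" is not specified at all.

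The paper closes precisely this gap with Ambro's theorem \cite[Theorem 3.3]{A2}: for a klt-trivial fibration the moduli b-divisor $\mathbf{M}$ is b-nef and abundant, and nef plus abundant forces $\kappa\geq 0$, so $M_{Y'}$ is nef and $\bQ$-effective on a suitable resolution $\mu:Y'\rightarrow Y$; one then descends effectivity from $Y'$ to $Y$ (with some care because $M_Y$ need not be $\bQ$-Cartier, working over the smooth locus and using normality of $Y$). To use that theorem one must also verify the definition of a klt-trivial fibration, in particular that $f_*\cO_X(\lceil\mathbf{A}(X,B)\rceil)$ has rank one; the paper proves this by a short argument bounding $\lceil\mathbf{A}_{X'}\rceil$ by a vertical divisor plus an exceptional divisor, whereas your resolution-and-discard-vertical-components step leaves this condition unchecked. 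So the overall strategy is sound and close to the paper's, but as written the proposal establishes only pseudo-effectivity; the decisive ingredient (nefness and abundance of the moduli b-divisor, hence its $\bQ$-effectivity) is missing.
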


The motivation of our investigation traces back to the following question asked by Demailly-Peternell-Schneider in \cite{DPS}:

\begin{question} Let $f:X\rightarrow Y$ be a surjective morphism between normal projective $\bQ$-Gorenstein varieties. Suppose that $-K_X$ is pseudo-effective and the non-nef locus of $-K_X$ does not dominate $Y$. Is $-K_Y$ pseudo-effective?
\end{question}

We recall some previous results towards answering the above question:
\begin{itemize} 
    \item In \cite[Main Theorem]{CZ}, Chen and Zhang proved that if there is a log canonical pair $(X,\Delta)$ such that if $-(K_X+\Delta)$ is nef, then $-K_Y$ is pseudo-effective. Also, \cite[Example 1.5]{CZ} shows that if the non-log canonical locus of $(X,\Delta)$ surjects onto $Y$, then $-(K_X+\Delta)$ being nef does not imply that $-K_Y$ is pseudo-effective. 
    
    \item In \cite[Theorem D]{Den}, by using analytic methods, Deng proved that if there is a log canonical pair $(X,\Delta)$ such that $-(K_X+\Delta)$ is pseudo-effective and the non-nef locus of $-(K_X+\Delta)$ does not surject onto $Y$ via $f$, then $-K_Y$ is pseudo-effective.
    
    \item In \cite[Theorem 3.1]{EG}, Ejiri and Gongyo generalized Chen and Zhang's theorem, by showing that even if $(X,\Delta)$ is not log canonical, as long as $(F,\Delta|_F)$ is lc for general fibres $F$, then $-K_Y$ is still pseudo-effective. The proof is algebraic and can be generalized to the positive characteristic.
\end{itemize}
Using the method in the proof of \cite[Theorem 3.1]{EG}, with additional ideas from \cite[Main Theorem]{CZ}, we can give an algebraic proof of Theorem \ref{pe}, which generalized \cite[Theorem 3.1]{EG} in characteristic zero. Furthermore, Theorem \ref{pe} simplifies in the following theorem which generalizes \cite[Main Theorem]{CZ}. 

\begin{theorem}
[cf. Theorem \ref{pe}] Let $f:X\rightarrow Y$ be an algebraic fibre space between normal projective varieties. Suppose the following conditions hold:
\begin{enumerate}
    \item There is a log pair $(X,\Delta)$ which is log canonical;
    
    \item $Y$ is $\bQ$-Gorenstein and there is a $\bQ$-Cartier divisor $D$ on $Y$ such that $L:=-(K_X+\Delta)-f^*D$ is a pseudo-effective $\bQ$-Cartier divisor;
    
    \item The restricted base locus $\textbf{B}_-(L)$ does not surject onto $Y$ via $f$.
\end{enumerate}
Then $-K_Y-D$ is pseudo-effective.
\end{theorem}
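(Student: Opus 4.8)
The plan is to run the weak-positivity argument of \cite[Theorem 3.1]{EG}, feeding the positivity of $L$ in through a boundary twist that is controlled by the hypothesis on $\textbf{B}_-(L)$. Write $F$ for a general fibre, $d=\dim F$, and fix an ample divisor $A_0$ on $X$ together with a nonempty open $U\subseteq Y$ with $f^{-1}(U)\cap\textbf{B}_-(L)=\emptyset$; such a $U$ exists precisely because $\textbf{B}_-(L)$ does not surject onto $Y$ (and one must work with $\textbf{B}_-(L)$ rather than $\textbf{B}(L)$, which may be all of $X$ since $L$ is only pseudo-effective). The goal is to show $-K_Y-D\in\overline{\mathrm{Eff}}(Y)$.

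The first and main step is to build the twist. Fix an integer $m\ge1$. Since $L$ is pseudo-effective, $L+\tfrac1m A_0$ is big with $\textbf{B}(L+\tfrac1m A_0)\subseteq\textbf{B}_-(L)$; choosing $N=N(m)$ sufficiently divisible so that $N(L+\tfrac1m A_0)$ is integral with $\mathrm{Bs}\,|N(L+\tfrac1m A_0)|=\textbf{B}(L+\tfrac1m A_0)$, and picking a general member $M_m\in|N(L+\tfrac1m A_0)|$, I set $\Delta_m:=\Delta+\tfrac1N M_m$. The system is base-point free over $f^{-1}(U)$, so by Bertini $(f^{-1}(U),\Delta_m|_{f^{-1}(U)})$ is log canonical, hence $(F,\Delta_m|_F)$ is log canonical for general $F$. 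Using $\tfrac1N M_m\sim_{\bQ}L+\tfrac1m A_0$ and $-(K_X+\Delta)\sim_{\bQ}L+f^*D$ one obtains
\[ K_{X/Y}+\Delta_m\ \sim_{\bQ}\ f^*(-K_Y-D)+\tfrac1m A_0, \]
and the point is that restricting this to $F$ gives $\sim_{\bQ}\tfrac1m A_0|_F$, which is ample — so the pushforward below is nonzero even though $f_*\cO_X\big(l(K_{X/Y}+\Delta)\big)$ itself typically vanishes (its restriction to $F$ is $l(K_F+\Delta_F)=-lL|_F$, which has no sections once $L|_F$ is not negligible).

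Next I would apply the weak positivity theorem for algebraic fibre spaces with log canonical general fibres (in the form of \cite{EG}; cf. \cite{C,F78,F17,V}): for $l$ sufficiently divisible, $\cG_l:=f_*\cO_X\big(l(K_{X/Y}+\Delta_m)\big)$ is weakly positive over $U$, and by the displayed relation and the projection formula $\cG_l\cong\cO_Y\big(l(-K_Y-D)\big)\otimes\cV_l$ with $\cV_l:=f_*\cO_X(\tfrac lm A_0)$, of generic rank $r_l=h^0(F,\tfrac lm A_0|_F)$ (locally free for $l\gg0$). Since the determinant of a weakly positive sheaf is pseudo-effective, $\det\cG_l=\cO_Y\big(l r_l(-K_Y-D)\big)\otimes\det\cV_l$ is pseudo-effective, so $(-K_Y-D)+\tfrac1{l r_l}c_1(\det\cV_l)\in\overline{\mathrm{Eff}}(Y)$. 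Asymptotic Riemann–Roch gives $c_1(\det\cV_l)=\tfrac{(l/m)^{d+1}}{(d+1)!}f_*(A_0^{d+1})+O(l^d)$ and $r_l=\tfrac{(l/m)^{d}}{d!}(A_0|_F)^d+O(l^{d-1})$, hence $\tfrac1{l r_l}c_1(\det\cV_l)\to\tfrac1m\Theta$ in $N^1(Y)_{\bR}$ as $l\to\infty$, where $\Theta:=\tfrac1{d+1}\cdot\tfrac{f_*(A_0^{d+1})}{(A_0|_F)^d}$ is a fixed class depending only on $f$ and $A_0$. As $\overline{\mathrm{Eff}}(Y)$ is closed, $(-K_Y-D)+\tfrac1m\Theta\in\overline{\mathrm{Eff}}(Y)$ for every $m$; letting $m\to\infty$ yields that $-K_Y-D$ is pseudo-effective.

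The hard part is the construction of $\Delta_m$: one needs an effective boundary replacing $\Delta$ that keeps the general fibre log canonical while absorbing essentially all of the positivity of $L$, and it is exactly here that the hypothesis on $\textbf{B}_-(L)$ enters, since a general member of $|N(L+\tfrac1m A_0)|$ could otherwise be too singular along the fibres for the weak positivity theorem to apply. The remaining points are routine but require care: weak positivity is invoked only over $U$, which is harmless for the pseudo-effectivity of determinants; the Riemann–Roch estimates must be carried through both limits (first $l\to\infty$ for fixed $m$, then $m\to\infty$) with compatible divisibilities; and since $Y$ is only $\bQ$-Gorenstein, one works throughout with $\bQ$-Cartier classes on $Y$, or first reduces to $Y$ smooth by a resolution supported over $f(\textbf{B}_-(L))$ so that the hypotheses are preserved and direct images of pseudo-effective classes remain pseudo-effective.
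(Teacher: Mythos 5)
Your first half runs parallel to the paper's own proof of Theorem \ref{pe}: you perturb $L$ by $\tfrac1m A_0$, note that $\textbf{B}(L+\tfrac1m A_0)\subseteq\textbf{B}_-(L)$ does not dominate $Y$, absorb a general member into the boundary so that the general fibre pair stays log canonical, and invoke the Campana--Fujino--Viehweg weak positivity of the direct image; this is exactly the role of $G_n=L+\tfrac1n A$ and $\Gamma_n$ in the paper (modulo the log resolution needed before citing \cite[Theorem 4.13]{C}, which you skip but which is routine). Where you genuinely diverge is afterwards: the paper pulls the weakly positive sheaf back via the generically surjective map $f^*f_*(-)\rightarrow(-)$, identifies it up to codimension two with the line bundle $\cO_X(l(f^*(-K_Y-D)+\Delta^-))$, and descends to $Y$ with Lemma \ref{eg1}(2); you instead take determinants and kill the ample contribution by an asymptotic Riemann--Roch computation and a double limit. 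That is an attractive alternative, and when $Y$ is smooth I believe it can be pushed through.

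The genuine gap is that the theorem only assumes $Y$ normal and $\bQ$-Gorenstein, and on such a $Y$ your argument is not well posed as written: $\det\cV_l=\det f_*\cO_X(\tfrac lm A_0)$ is merely a rank-one reflexive sheaf, and $f_*(A_0^{d+1})$, hence your class $\Theta$, is only a Weil divisor class, so none of these classes live in $N^1(Y)_{\bR}$ and the convergence statement ``$\tfrac1{lr_l}c_1(\det\cV_l)\to\tfrac1m\Theta$ in $N^1(Y)_{\bR}$'' has no meaning; even if you rework the limits in the space of Weil classes modulo algebraic equivalence, you end with $-K_Y-D$ approximated by effective non-$\bQ$-Cartier Weil divisors, and deducing pseudo-effectivity of the $\bQ$-Cartier class $-K_Y-D$ in $N^1(Y)$ from that requires a further nontrivial argument you do not supply. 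Your proposed repair is also not correct as stated: a resolution of $Y$ is supported over $\Sing(Y)$, not over $f(\textbf{B}_-(L))$, and after base changing to a resolution (or flattening) of $Y$ the crepant pullback of $(X,\Delta)$ acquires negative exceptional components, so hypothesis (1) (effectivity of the boundary) is lost; moreover pushing pseudo-effective classes forward under a birational morphism again lands you in Weil classes. Handling exactly this is why the paper's Theorem \ref{pe} is formulated with $\Delta=\Delta^+-\Delta^-$, $f(\Supp\Delta^-)\neq Y$, and an effective $f$-exceptional correction $B$, and why the descent is done through the line bundle $\cO_X(l(f^*(-K_Y-D)+\Delta^-+B))$ and Lemma \ref{eg1}(2), which keeps every class $\bQ$-Cartier throughout. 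To complete your route you would either have to restrict to smooth (or at least sufficiently factorial) $Y$, or add the missing reduction with the $\Delta^-$ and $f$-exceptional bookkeeping, which is essentially the second half of the paper's proof.
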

Note that our result does not cover \cite[Theorem D(a)]{Den}, since for varieties with singularities worse than klt singularities, it is not confirmed if ${\rm NNef}(-)=\textbf{B}_-(-)$ for $\bQ$-Cartier divisors (cf. \cite[Conjecture 1.7]{BBP}, \cite[Theorem 1.2]{CDiB}). Using this theorem, we can give an algebraic proof of a bigness criterion of anti-canonical divisor:

\begin{theorem}
[cf. Theorem \ref{big}] Under the same notation and assumption of Theorem 1.5. Assume $L$ is big, and one of the following conditions holds:
\begin{enumerate}
    \item $\textbf{B}_+(L)$ does not dominate $Y$;
    \item $\textbf{B}_-(L)$ does not surject onto $Y$, and $(X,\Delta)$ is klt.
\end{enumerate}
Then $-K_Y-D$ is big.
\end{theorem}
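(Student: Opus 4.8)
The plan is to upgrade the pseudo-effectivity conclusion of Theorem~\ref{pe} to bigness by ``spending'' a little of the bigness of $L$ on the base. Fix an ample $\bQ$-Cartier divisor $H$ on $Y$. It suffices to produce some $\varepsilon>0$ and a log canonical pair to which Theorem~\ref{pe} applies with $D$ replaced by $D+\varepsilon H$: then $-K_Y-(D+\varepsilon H)$ is pseudo-effective, and hence $-K_Y-D=(-K_Y-D-\varepsilon H)+\varepsilon H$ is pseudo-effective plus ample, i.e.\ big. So the whole argument reduces to verifying the hypotheses of Theorem~\ref{pe} for the perturbed data, the only delicate one being that the new restricted base locus still does not surject onto $Y$. (The $\bQ$-Cartierness needed along the way is automatic from the $\bQ$-Gorenstein hypothesis on $Y$ and the $\bQ$-Cartierness of $D$ and $L$; and since $f^*H$ is a fixed nef class, any big divisor minus $\varepsilon f^*H$ stays big, hence pseudo-effective, for $0<\varepsilon\ll1$.)

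In case (1) I would keep $(X,\Delta)$ and apply Theorem~\ref{pe} with $D+\varepsilon H$, so the relevant divisor becomes $L-\varepsilon f^*H$. As this is big, $\textbf{B}_-(L-\varepsilon f^*H)\subseteq\textbf{B}_+(L-\varepsilon f^*H)$. Choose a small ample $\bQ$-Cartier divisor $A$ on $X$ with $\textbf{B}_+(L)=\textbf{B}(L-A)$ (the augmented base locus is attained by every sufficiently small ample perturbation), and then $\varepsilon$ small enough that $A-\varepsilon f^*H$ is still ample. Using $\textbf{B}_+(D')\subseteq\textbf{B}(D'-B)$ for any ample $\bQ$-Cartier $B$, we get
\[
\textbf{B}_+(L-\varepsilon f^*H)\ \subseteq\ \textbf{B}\big((L-\varepsilon f^*H)-(A-\varepsilon f^*H)\big)\ =\ \textbf{B}(L-A)\ =\ \textbf{B}_+(L),
\]
which by hypothesis does not dominate $Y$, a fortiori does not surject onto $Y$. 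Hence Theorem~\ref{pe} applies.

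In case (2), where $(X,\Delta)$ is klt, I would instead trade the big part of $L$ for boundary. By Kodaira's lemma write $L\sim_{\bQ}A_X+E$ with $A_X$ ample $\bQ$-Cartier and $E\ge0$ a $\bQ$-Cartier divisor; for $0<t\ll1$ the pair $(X,\Delta+tE)$ is again klt, and
\[
-(K_X+\Delta+tE)-f^*D\ =\ L-tE\ \sim_{\bQ}\ (1-t)L+tA_X.
\]
Apply Theorem~\ref{pe} to $(X,\Delta+tE)$ with $D+\varepsilon H$: the relevant divisor is $\bQ$-linearly equivalent to $(1-t)L+(tA_X-\varepsilon f^*H)$, and for $\varepsilon$ small relative to the already-fixed $t$ the class $tA_X-\varepsilon f^*H$ is ample, so this divisor is big and
\[
\textbf{B}_-\big((1-t)L+(tA_X-\varepsilon f^*H)\big)\ \subseteq\ \textbf{B}_-\big((1-t)L\big)\ =\ \textbf{B}_-(L),
\]
since adding an ample class can only shrink $\textbf{B}_-$ and $\textbf{B}_-$ is invariant under positive rescaling. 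As $\textbf{B}_-(L)$ does not surject onto $Y$, Theorem~\ref{pe} applies.

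In both cases we conclude, as explained in the first paragraph, that $-K_Y-D$ is big. The crux is the base-locus bookkeeping under perturbation: in case (1) this is exactly why one needs the stronger hypothesis on $\textbf{B}_+(L)$ rather than on $\textbf{B}_-(L)$, since subtracting the nef class $\varepsilon f^*H$ is harmless for $\textbf{B}_+$ but uncontrollable for $\textbf{B}_-$; in case (2) the klt hypothesis is precisely what lets one absorb $E$ into the boundary and so replace $L$ by a class of the form $(1-t)L+(\text{ample})$, for which the $\textbf{B}_-$ estimate is immediate. One must also take care of the order of the quantifiers (fix $t$ first, then $\varepsilon$ depending on $t$) and arrange Kodaira's lemma with both $A_X$ and $E$ being $\bQ$-Cartier.
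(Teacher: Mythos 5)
Your argument is correct and is essentially the paper's own proof: in both cases one perturbs $D$ by a small multiple of a divisor on $Y$, checks that the relevant restricted base locus still fails to surject onto $Y$ (via the augmented base locus $\textbf{B}_+(L)$ in case (1), and via a Kodaira-type decomposition of $L$ absorbed into the klt boundary in case (2)), and then invokes Theorem \ref{pe} to get pseudo-effectivity of $-K_Y-D-\varepsilon(\text{ample})$, hence bigness of $-K_Y-D$. The only difference is cosmetic: the paper perturbs by an arbitrary $\bQ$-Cartier divisor $D_0$ on $Y$ (and in case (2) decomposes $L-\tfrac1n f^*D_0$ rather than $L$ itself) so as to also record the weak-positivity conclusion, whereas you take $H$ ample and conclude bigness directly, which suffices for the statement as given.
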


In the above theorems, the assumption that the asymptotic base locus does not surject onto $Y$ is essential as the following example will show.

\begin{example} Let $Y$ be a smooth curve with genus $g\geq 2$, and  consider the ruled surface $X=\bP_Y(\cO\oplus\cO(-K_Y-D))$, where $D$ is an ample divisor on $Y$ with $\deg D=d>2g-2$. Then $K_X = -2C_0-f^*D$, where $f$ is the structure morphism $f:X\rightarrow Y$ and $C_0$ is the distinguished section. Note that $-K_X\geq f^*D+C_0\geq 0$ and $(f^*D+C_0)^2=d+2-2g>0$, which implies $-K_X$ is big, but $-K_Y$ is anti-ample. Hence the conclusions of the above theorems fail. Note that in this case, we have $\textbf{B}_-(-K_X)=\textbf{B}(-K_X)=\textbf{B}_+(-K_X)=\Supp C_0$, which is surject onto $Y$.
\end{example}

The structure of this paper is as follows. In Section 2, we give a brief review of asymptotic invariants for $\bQ$-Cartier divisor, klt-trivial fibration, moduli {\bf b}-divisors, and weakly positive sheaves. In Section 3, we generalize \cite[Theorem 3.1]{EG} into Theorem \ref{pe}. Then we apply Theorem \ref{pe} to prove Theorem \ref{big}. This is then followed by an application of the theory of moduli $\bQ$-${\bf b}$-divisor and \cite[Theorem 3.3]{A} to prove Theorem \ref{eff}. This leads to Corollary \ref{rela}, where we describe some asymptotic invariants of the relative anti-canonical divisor $-K_{X/Y}$. In Section 4, we prove Theorem \ref{antiK}, by first using the methodology of \cite{EG} to show the inequality for $\kappa(Y,-K_Y)=0$ then using the Iitaka fibrations to obtain the general case. In section 5, we discuss related questions and possible generalizations.\\

\textbf{Acknowledgements:} I would like to thank my advisor Jungkai Alfred Chen for his valuable advice and encouragement. This work can not be done without his consistent support and enlightening suggestions. I want to thank Yoshinori Gongyo for his inspiring classes during his visit to the National Taiwan University in 2019 and for his help and detailed explanations of his works. I thank Meng Chen and Qi Zhang for answering my questions. I would like to thank David Wen for his helpful comments about writing techniques. Next, I want to thank Yoshinori Gongyo and Sho Ejiri for reading my preprint, this article was inspired by their work in \cite{EG}. Also, I want to thank the reviewer for many useful comments on this paper. Next, I want to thank Marta Benozzo and Iacopo Brivio for pointing out errors in the proof of Theorem \ref{eff} after this article had been published online and for their fruitful comment on this revised version.

I would also like to thank the National Taiwan University, the National Center of Theoretical Sciences, and the Ministry of Science and Technology of Taiwan for their generous support. Finally, I want to thank Ching-Jui Lai, Jheng-Jie Chen, Chih-Wei Chang, Hsin-Ku Chen, Hsueh-Yung Lin, Bin Nguyen, and Shi-Xin Wang for their helpful suggestions and encouragement.

\section{Preliminaries}
In this paper, we work over an algebraically closed field $k$ with characteristic zero. A morphism between normal varieties $f:X\rightarrow Y$ is called an {\em algebraic fibre space} if it is a surjective projective morphism with connected fibres. In particular, if $f$ is an algebraic fibre space, then $f_*\cO_X=\cO_Y$. We say a $\bQ$-divisor $D$ on a variety is $\bQ$-Cartier (resp. $\bQ$-effective) if there is a positive integer $m$ such that $mD$ is a Cartier divisor (resp. linearly equivalent to an effective divisor.) In particular, a $\bQ$-Cartier divisor $D$ is $\bQ$-effective if and only if its Iitaka dimension $\kappa(X,D)\geq 0.$

For a $\bQ$-divisor $D=\sum d_iD_i$ on $X$, we will denote
\begin{align*}
    D^+:=\sum_{d_i>0} d_iD_i,\hspace{1cm}  D^-:=\sum_{d_i<0} d_iD_i,
\end{align*}
and that
\begin{align*}
    D^h:=\sum_{f(\Supp D_i)=Y} d_iD_i,\hspace{1cm}  D^v:=\sum_{f(\Supp D_i)\neq Y} d_iD_i.
\end{align*}
We called $D=D^+-D^-$ the {\em effective decomposition}, and $D=D^h+D^v$ the {\em vertical decomposition} of $D$. For other standard notions of birational geometry, such as singularities of pairs, we refer the readers to \cite{KM98} for more details.

\subsection{Asymptotic invariants of $\bQ$-Cartier divisors.}

In this subsection, we will give a brief review of the basic properties of asymptotic invariants of $\bQ$-divisors. Further details of this topic can be found in \cite{BBP}, \cite{BKKMSU}, and \cite{ELMNP}.

\begin{definition}
Let $D$ be a Cartier divisor on a normal variety, the stable base locus of $D$, denoted by $\textbf{B}(D)$, is defined as follows:
$$\textbf{B}(D):=\cap_{m\in \bZ_{>0}}\textbf{Bs}(mD).$$ If $D$ is a $\bQ$-Cartier divisor, and $l$ is a natural number such that $lD$ is Cartier, then we define $\textbf{B}(D)$ to be the stable base locus of $lD$.

Moreover, by fixing an ample divisor $A$, we can define the following sets
\begin{align*}
    \textbf{B}_+(D)&:=\cap_{\varepsilon>0}\textbf{B}(D-\varepsilon A);\\
    \textbf{B}_-(D)&:=\cup_{\varepsilon>0}\textbf{B}(D+\varepsilon A).
\end{align*}
It is well-known that both $\textbf{B}_+(D)$ and $\textbf{B}_-(D)$ are independent of the choice of $A$.
We call $\textbf{B}_+(D)$ the augmented base locus of $D$, and  $\textbf{B}_-(D)$ the restricted base locus (or diminished base locus) of $D$.
\end{definition}

Clearly, from the definition, we have $$\textbf{B}_-(D)\subset\textbf{B}(D)\subset\textbf{B}_+(D).$$ It is  easy to see that $\textbf{B}_+(D)$ and $\textbf{B}_-(D)$ depend only on the numerical equivalent class of $D$ and hence they are numerical invariants, however ${\bf B}(D)$ is not a numerical invariant (cf. \cite[Example 1.1]{ELMNP}).

\begin{remark}
{\em By the definition, both $\textbf{B}_+(D)$ and $\textbf{B}(D)$ are Zariski closed subsets of $X$. But by \cite{L}, $\textbf{B}_-(D)$ can be a countably infinite union of Zariski closed subsets.}
\end{remark}

From the theory of the cone of divisors (cf. \cite[Chapter 1.4, Chapter 2.2]{La}), we immediately have the following correspondence between these asymptotic invariants and positivity of $\bQ$-divisor (cf. \cite[section 4]{BKKMSU}):
\begin{align*}
    &D\ {\rm is\ pseudo-}{\rm effective} &\Leftrightarrow &\textbf{B}_-(D)\neq X,\\
    &D\ {\rm is\ effective} &\Leftrightarrow &\textbf{B}(D)\neq X,\\
    &D\ {\rm is\ big} &\Leftrightarrow &\textbf{B}_+(D)\neq X,\\
    &D\ {\rm is\ nef} &\Leftrightarrow &\textbf{B}_-(D)=\emptyset,\\
    &D\ {\rm is\ semiample} &\Leftrightarrow &\textbf{B}(D)=\emptyset,\\
    &D\ {\rm is\ ample} &\Leftrightarrow &\textbf{B}_+(D)=\emptyset.
\end{align*}

\subsection{The klt trivial fibration and moduli (b-)divisor.}

In this subsection, we will give a brief review of the klt-trivial fibration and the moduli {\bf b}-divisors. Details of these topics can be found in \cite{A}, \cite{A2}, \cite{F12}, \cite{FG14}, \cite{K97}, and \cite{K98}.

\begin{definition}
Let $D$ be a divisor on a normal variety $X$. A {\bf b}-divisor ${\bf D}$ contains a family of divisors $\{ {\bf D}_{X'}\}$, where $X'$ is taken over all higher birational models $\pi:X'\rightarrow X$ such that $\pi$ is a proper birational morphism, ${\bf D}_{X'}$ is a divisor on $X'$ such that $\pi_*({\bf D}_{X'})=D$, and $\pi_*({\bf D}_{X''})={\bf D}_{X'}$ for any birational morphism $\pi': X''\rightarrow X'$. If $D$ is a $\bQ$-divisor, then the $\bQ$-{\bf b}-divisor is defined in the same way.
\end{definition}

\begin{definition}
A klt-trivial fibration (which is equivalent to the lc-trivial fibration in the sense of \cite{A2}) is an algebraic fibre space $f:(X,B)\rightarrow Y$ between normal varieties with a sub log pair $(X, B)$ such that
\begin{enumerate}
    \item $(X,B)$ is subklt over the generic point of $Y$;
    \item rank $f_*\cO_X(\lceil {\bf A}(X,B)\rceil)=1$;
    \item $K_X+B\sim_{\bQ} f^*D$ for some $\bQ$-Cartier divisor $D$ on $Y$.
\end{enumerate}
Here, the discrepancy $\bQ$-{\bf b}-divisor ${\bf A}(X,B)=\{ {\bf A}_{X'}\}$ is defined by the formula
$$ K_{X'}=\pi^*(K_X+B)+ {\bf A}_{X'}.$$
\end{definition}

Next, we recall the definition of the moduli $\bQ$-{\bf b}-divisors and the discriminant $\bQ$-{\bf b}-divisors.

\begin{definition}
Let $f:(X,B)\rightarrow Y$ be a klt-trivial fibration, we define the discriminant $\bQ$-divisor $B_Y$ of $f:(X,B)\rightarrow Y$ in the following way: Let $P$ be a prime divisor on $Y$, which is Cartier in a neighborhood of its generic point, then we define $$b_P:=\max\{t\in \bQ| (X,B+tf^*P)\ {\rm is\ sublc\ over\ the\ generic\ point\ of\ } P\}, $$
and set $$ B_Y:=\sum_P (1-b_P)P, $$ where $P$ runs over all prime divisor of $Y$. We set $M_Y=D-K_Y-B_Y$ and  call $M_Y$  the moduli $\bQ$-divisor of $f:(X,B)\rightarrow Y$.

The moduli $\bQ$-{\bf b}-divisor ${\bf M}=\{{\bf M}_{Y'}\}$ and the discriminant $\bQ$-{\bf b}-divisor ${\bf B}=\{{\bf B}_{Y'}\}$ is defined in the following way: For a proper birational morphism $\mu:Y'\rightarrow Y$, let $X'$ be a normalization of the main component of $X \times_{Y} Y'$ such that the induced morphism $\pi:X'\rightarrow X$ is proper and birational. Define $B_{X'}$ by $$K_{X'}+B_{X'}=\pi^*(K_X+B),$$ then $f':(X',B_{X'})\rightarrow Y'$ is also a klt-trivial fibration. Thus, let $M_{Y'}$ and $B_{Y'}$ be the moduli $\bQ$-divisor and discriminant $\bQ$-divisor of $f':(X',B_{X'})\rightarrow Y'$. Then we set ${\bf M}_{Y'}=M_{Y'}$ and ${\bf B}_{Y'}= B_{Y'}.$
\end{definition}
By the definition, it is easy to see that if $B$ is $\bQ$-effective, then so is $B_Y$.

\subsection{Weakly positive sheaves.}

In this subsection, we will give a brief review of the definition and basic properties of weakly positive sheaves. We adopt the convention and results in \cite[Section 2.2]{EG} which will be necessary. More details of weakly positive sheaves can be found in \cite[Section 4.2]{C}, \cite[Section 2.2]{EG}, and \cite[Section 1]{V}.

\begin{definition}
Let $X$ be a normal quasi-projective variety, $\cG$ be a coherent sheaf on $X$, and $A$ be a fixed ample divisor on $X$. 
\begin{enumerate}
    \item We say that $\cG$ is generically globally generated if the natural map $H^0(X,\cG)\otimes\cO_X\rightarrow \cG$ is surjective over the generic point of $X$.
    \item We say that  $\cG$ is weakly positive if for any natural number $n$, there is a natural number $m$ such that the sheaf $(S^{nm}(\cG))^{**}\otimes\cO_X(mA)$ is generically globally generated, where $S^n(-)$ denotes the $n$-th symmetric power, and $(-)^{**}$ denotes the double dual.
\end{enumerate}
\end{definition}
Note that the definition is independent of the choice of $A$, and when $\cG=\cO_X(D)$ is a line bundle, $\cG$ is weakly positive if and only if $D$ is a pseudo-effective divisor. Moreover, if $\cG|_U$ is weakly positive on $U$ for some open dense subset $U\subset X$ such that $X-U$ has codimension at least 2, then $\cG$ is weakly positive on $X$.

Here we recall two useful lemmas about the weakly positive sheaves.

\begin{lemma}\label{eg1}
(\cite[Lemma 2.4]{EG}) Let $f:Y'\rightarrow Y$ be a surjective projective morphism between geometrically normal quasi-projective varieties over a field, let $\cG$ be a torsion-free coherent sheaf on $Y$:
\begin{enumerate}
    \item If there is no $f$-exceptional divisor on $Y'$, and $\cG$ is weakly positive, then $f^*\cG$ is also weakly positive. Here a prime divisor $E$ on $Y'$ is called $f$-exceptional if $f(E)$ has codimension at least 2 in $Y$.
    \item If $f^*\cG\otimes \cO_{Y'}(E)$ is weakly positive for some effective $f$-exceptional divisor $E$ on $X$, then $\cG$ is weakly positive.
\end{enumerate}
\end{lemma}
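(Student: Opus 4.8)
The plan is to follow the standard treatment of weak positivity (see \cite{V}, \cite[Section 4.2]{C}, \cite[Section 2.2]{EG}), built on two reductions that I would set up first. Being generically globally generated concerns only the stalk at the generic point, and the generic point of $Y'$ lies over that of $Y$; and, by the remarks recalled above, weak positivity may be tested after restricting to a dense open subset whose complement has codimension at least two, while on the locus where a torsion-free sheaf is locally free the operations $S^{k}(-)$, $(-)^{**}$ and $f^{*}$ all commute. Under the hypothesis of (1) that no $f$-exceptional divisor exists, the preimage under $f$ of a closed subset of codimension $\geq2$ in $Y$ is again of codimension $\geq2$ in $Y'$, so such "big open set" reductions survive pullback.

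For (1), I would fix $n$, use weak positivity of $\cG$ to produce $m$ with $\cF:=(S^{nm}\cG)^{**}\otimes\cO_Y(mA)$ globally generated at the generic point, and then pull the surjection $H^{0}(Y,\cF)\otimes\cO_Y\twoheadrightarrow\cF$ back along $f$: since $f$ is surjective and $f^{*}$ is right exact, $f^{*}\cF$ is generically globally generated. Over the big open set $U':=f^{-1}(U)$, where $U\subseteq Y$ is the locus on which $S^{nm}\cG$ is locally free, one has $(S^{nm}f^{*}\cG)^{**}=f^{*}((S^{nm}\cG)^{**})$, so $(S^{nm}f^{*}\cG)^{**}\otimes\cO_{Y'}(mf^{*}A)$ is generically globally generated. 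The remaining point is that $f^{*}A$ is not ample on $Y'$: here I would use projectivity of $f$ to write a chosen ample divisor as $A'=H+f^{*}(NA)$ with $H$ $f$-ample and $N\gg0$ (so $A'-f^{*}A$ is again ample), replace $m$ by a large multiple $m'$ (legitimate since at the generic point $(S^{nm'}\cG)^{**}\otimes\cO(m'A)$ is a symmetric power of $(S^{nm}\cG)^{**}\otimes\cO(mA)$) so that $m'(A'-f^{*}A)$ is effective, and twist by the corresponding effective line bundle — which preserves generic global generation. As $n$ was arbitrary this gives weak positivity of $f^{*}\cG$, and independence of weak positivity from the ample divisor removes the special choice of $A'$.

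For (2), I would restrict over $V:=Y\setminus f(\Supp E)$, which is a big open set because $E$ is $f$-exceptional; over $W:=f^{-1}(V)$ the line bundle $\cO_{Y'}(E)$ is trivial, so $f^{*}\cG\otimes\cO_{Y'}(E)$ restricts to $g^{*}(\cG|_V)$ with $g:=f|_W$, and this is weakly positive on $W$ (restriction of a weakly positive sheaf to an open subset is weakly positive); one checks that $g$ still has no exceptional divisor. It then remains to descend weak positivity along the surjective projective morphism $g:W\to V$, after which the big-open-set criterion upgrades weak positivity of $\cG|_V$ on $V$ to that of $\cG$ on $Y$. For the descent I would, given $n$, take an ample divisor $A_W=H'+g^{*}(NA_V)$ on $W$ ($H'$ $g$-ample, $A_V$ ample on $V$, $N\gg0$) and $m$ with $(S^{nm}g^{*}(\cG|_V))^{**}\otimes\cO_W(mA_W)$ generically globally generated, rewrite this over the relevant big open locus as $g^{*}((S^{nm}\cG|_V)^{**})\otimes\cO_W(mH'+mg^{*}(NA_V))$, push forward by $g$, and use the projection formula together with the relative global generation of $\cO_W(mH')$ to extract a generically globally generated twist of $(S^{nm'}\cG|_V)^{**}\otimes\cO_V(m'A_V)$ for suitable $m'$; this is the standard computation of \cite{V}.

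The main obstacle is this last descent step in (2): one must control the pushforward $g_{*}\cO_W(mH')$ of the relatively ample twist, which a priori is not a positive sheaf, so the bookkeeping has to be arranged (as in Viehweg's argument) so that it does not obstruct generic global generation downstairs. In (1) the only mild subtlety — that $f^{*}A$ fails to be ample — is dispatched cheaply using projectivity of $f$ and independence of weak positivity from the chosen ample divisor. Everything else is the formal functoriality of generic global generation under pullback, pushforward, symmetric powers and reflexive hulls, carried out over big open subsets.
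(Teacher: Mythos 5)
The paper does not prove this lemma at all -- it is quoted verbatim from \cite[Lemma 2.4]{EG} -- so your attempt has to be measured against the argument in that reference rather than anything in this paper. Your treatment of part (1) is fine and is the standard one: the absence of $f$-exceptional divisors makes preimages of codimension-two sets have codimension two, so the reflexive-hull and symmetric-power comparisons hold over a big open set, pullback preserves generic surjectivity of the evaluation map because the generic point of $Y'$ lies over that of $Y$, and the mismatch between $f^*A$ and an ample divisor on $Y'$ is absorbed by choosing $A'$ with $A'-f^*A$ ample and using independence of the ample divisor.

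Part (2), however, has a genuine gap exactly at the step you yourself flag. After deleting $f(\Supp E)$ and trivializing $\cO_{Y'}(E)$, what remains is to descend weak positivity of $g^*(\cG|_V)$ along a surjective projective morphism $g:W\to V$ with positive-dimensional fibres, and this is the entire content of the lemma; the mechanism you propose does not close it. Pushing forward the generically surjective evaluation map does not yield a generically surjective map, since $g_*$ is only left exact (over the generic point it is $H^0$ of the fibre, which does not preserve surjectivity), and what you actually obtain downstairs are sections of $(S^{nm}\cG|_V)^{**}\otimes\cO_V(mNA_V)\otimes g_*\cO_W(mH')$. To convert these into sections of a sheaf of the form $(S^{n'm'}\cG|_V)^{**}\otimes\cO_V(m'A_V)$ you would need generically surjective maps from $g_*\cO_W(mH')$ onto line bundles of positivity bounded linearly in $m$, and neither ``relative global generation of $\cO_W(mH')$'' nor the projection formula produces such quotients: the pushforward of a relatively ample twist is a high-rank sheaf whose positive part could be doing all of the generating, and its dual becomes arbitrarily negative with $m$. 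An honest completion requires a different idea -- for instance reducing to the generically finite case by cutting $W$ with general members of $|kA_W|$ to obtain a multisection, handling the finite part by a trace/norm argument (in characteristic zero) and the birational part via $g_*\cO(E)=\cO_V$ together with extension of sections of reflexive sheaves over codimension two -- or else the uniform slope/regularity bookkeeping for $g_*\cO_W(mH')$ must be carried out explicitly, as in \cite{V} or \cite{F17}. As written, the crucial descent is asserted rather than proved, so the proposal does not yet establish part (2); for the intended argument see the proof of \cite[Lemma 2.4]{EG}.
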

\begin{lemma} \label{eg2}
(\cite[Lemma 2.5]{EG})Let $\cF\rightarrow \cG$ be a generically surjective morphism between coherent sheaves on a normal quasi-projective variety over a field. If $\cF$ is weakly positive, so is $\cG$. 
\end{lemma}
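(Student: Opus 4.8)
The plan is to reduce everything to a dense open subset of $X$ on which all the sheaves involved are locally free, using two facts already at our disposal: that weak positivity may be tested away from a closed set of codimension at least two, and that the operation $(S^{nm}(-))^{**}$ appearing in the definition discards all torsion and all codimension-$\ge 2$ information.

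First I would dispose of torsion. Because $\mathcal{H}om(T,\cO_X)=0$ for any torsion sheaf $T$ on the normal (hence integral, with torsion-free structure sheaf) variety $X$, the map $S^{nm}\cF\to S^{nm}(\cF/\mathrm{tors})$ becomes an isomorphism after applying $\mathcal{H}om(-,\cO_X)$, and likewise twice; thus $(S^{nm}\cF)^{**}\cong (S^{nm}(\cF/\mathrm{tors}))^{**}$, and similarly for $\cG$. Since the induced morphism $\cF/\mathrm{tors}\to\cG/\mathrm{tors}$ is still generically surjective, neither hypothesis nor conclusion changes, so I may and do assume $\cF$ and $\cG$ are torsion-free. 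Next, choose a dense open $U\subseteq X$ with $\operatorname{codim}_X(X\setminus U)\ge 2$ such that $\cF|_U$ and $\cG|_U$ are locally free and $\cF|_U\to\cG|_U$ is surjective (possible since it is surjective at the generic point and $X$ is normal). By the remark recalled in the text — a coherent sheaf is weakly positive on $X$ as soon as its restriction to such a $U$ is weakly positive on $U$ — it suffices to prove that $\cG|_U$ is weakly positive.

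On $U$ the argument is direct. Fix $n\in\bN$. Since the functor $S^{nm}$ is right exact, the surjection $\cF|_U\twoheadrightarrow\cG|_U$ induces, for every $m$, a surjection $S^{nm}(\cF|_U)\twoheadrightarrow S^{nm}(\cG|_U)$, still surjective after tensoring with $\cO_U(mA)$. Weak positivity of $\cF$ provides an $m$ for which $(S^{nm}\cF)^{**}\otimes\cO_X(mA)$ is generically globally generated; restricting to $U$, this sheaf equals the locally free sheaf $S^{nm}(\cF|_U)\otimes\cO_U(mA)$ (symmetric powers commute with restriction, and a locally free sheaf is its own reflexive hull), and its evaluation map is surjective at the generic point. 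Composing with the surjection above — which carries $H^0$ into $H^0$ and commutes with evaluation — shows that the evaluation map of $S^{nm}(\cG|_U)\otimes\cO_U(mA)$ is surjective at the generic point, i.e. this sheaf is generically globally generated on $U$. Since $S^{nm}(\cG|_U)\otimes\cO_U(mA)=\big((S^{nm}\cG)^{**}\otimes\cO_X(mA)\big)|_U$ and $n$ was arbitrary, $\cG|_U$ is weakly positive, hence so is $\cG$.

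The only genuinely delicate point — and the reason the whole argument is routed through the big open set $U$ — is that $S^{nm}$ and $(-)^{**}$ do not commute in general, and torsion may sit in codimension one; I would be careful to record that "generically globally generated" is detected purely at the generic point, so that replacing $X$-sections by $U$-sections in the comparison above is harmless. Beyond that, everything is formal, relying only on right-exactness of symmetric powers and functoriality of global sections.
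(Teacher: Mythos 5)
The paper itself gives no proof of this lemma --- it is quoted verbatim from \cite[Lemma 2.5]{EG} --- so you are supplying an argument where the paper only cites. Your core mechanism is the right one: right-exactness of symmetric powers, compatibility of double duals with restriction, and the fact that generic global generation is tested only at the generic point and is inherited along generically surjective maps. The torsion reduction via $\sHom(T,\cO_X)=0$ is also correct. But one intermediate claim is false as stated: you assert that one can choose an open $U$ with $\mathrm{codim}_X(X\setminus U)\ge 2$ on which $\cF|_U\to\cG|_U$ is \emph{surjective}, ``possible since it is surjective at the generic point and $X$ is normal.'' Generic surjectivity only says the cokernel vanishes at the generic point; its support can perfectly well be a divisor, and normality is irrelevant. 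For example, on $X=\bP^2$ the inclusion $\cO_X\hookrightarrow\cO_X(D)$, with $D$ a line, is a generically surjective map between weakly positive sheaves whose cokernel is supported on all of $D$, so no such $U$ exists. Local freeness of the two sheaves on a big open set is available (for torsion-free sheaves on a normal variety), but surjectivity of the map there is not.

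The error is repairable, because nothing downstream actually uses surjectivity on all of $U$: you only compose with evaluation maps and check surjectivity at the generic point, and $S^{nm}$ applied to a generically surjective map is again generically surjective. So delete the surjectivity requirement from the choice of $U$ and replace ``surjection'' by ``generically surjective map'' in the next paragraph; the rest of your argument goes through unchanged. In fact the whole detour through torsion and the big open set can be avoided: the natural maps $S^{nm}\cF\to(S^{nm}\cF)^{**}$ and $S^{nm}\cG\to(S^{nm}\cG)^{**}$ are isomorphisms at the generic point, so one gets a generically surjective map $(S^{nm}\cF)^{**}\otimes\cO_X(mA)\to(S^{nm}\cG)^{**}\otimes\cO_X(mA)$ directly on $X$, and generic global generation of the source passes to the target by composing $H^0(X,-)\otimes\cO_X\to(S^{nm}\cF)^{**}\otimes\cO_X(mA)$ with this map and using functoriality of global sections. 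This shorter route proves the lemma for arbitrary coherent sheaves with no reduction steps at all.
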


\section{Positivity of the anti-canonical divisor of the image}

We first prove a pseudo-effectiveness criterion of the anti-canonical divisor, which is a generalization of \cite[Theorem 3.1]{EG} in characteristic zero. The proof follows the original argument of Ejiri and Gongyo closely. 

\begin{theorem}\label{pe}
Let $f:X\rightarrow Y$ be an algebraic fibre space between normal projective varieties such that $Y$ is $\bQ$-Gorenstein. Let $\Delta=\Delta^+-\Delta^-$ be a $\bQ$-Weil divisor on $X$ such that $K_X+\Delta$ is $\bQ$-Cartier,  $(F,\Delta^+|_F)$ is log canonical for general fibre $F$ of $f$, and $f(\Supp \Delta^-)\neq Y$. Suppose that there is a $\bQ$-Cartier divisor $D$ on $Y$ such that $L:=-(K_X+\Delta)-f^*D$ is a pseudo-effective $\bQ$-Cartier divisor such that $\textbf{B}_-(L)$ does not surject onto $Y$ via $f$. Then, for $l\in\bN$ such that $l(K_X+\Delta)$ and $l(K_Y+D)$ are Cartier and $l\Delta^-$ is integral, we have that
\begin{align*}
    \cO_X(l(f^*(-K_Y-D)+\Delta^-+B))
\end{align*}
is weakly positive for some effective $f$-exceptional $\bQ$-divisor $B$. Moreover, if $Y$ has at worst canonical singularities, then we can take $B=0$. 

In particular, if $\Delta$ is effective, then $-K_Y-D$ is pseudo-effective by Lemma \ref{eg1}(2).
\end{theorem}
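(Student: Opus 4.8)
The plan is to follow the Ejiri--Gongyo argument from \cite[Theorem 3.1]{EG} closely, with the modification that we must now track the negative part $\Delta^-$ and allow an $f$-exceptional correction term $B$. The strategy is to produce a generically surjective map onto (a twist of) the sheaf $\cO_X(l(f^*(-K_Y-D)+\Delta^-+B))$ from a weakly positive sheaf, and then invoke Lemma \ref{eg2}. The natural candidate for the weakly positive sheaf is one built out of the relative pluricanonical sheaf $f_*\cO_X(l(K_{X/Y}+\Delta^+))$, whose weak positivity is exactly the content of the weak positivity theorems \cite{C,F78,F17,K81,V}: here the hypothesis that $(F,\Delta^+|_F)$ is log canonical on the general fibre is what makes this relative pluricanonical sheaf make sense and be weakly positive.

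First I would set up the numerics. Since $L=-(K_X+\Delta)-f^*D$ is pseudo-effective and $\textbf{B}_-(L)$ does not surject onto $Y$, for a general fibre $F$ we get $L|_F$ pseudo-effective, hence (using $\Delta|_F=\Delta^+|_F$ and $K_X|_F=K_F$) that $-(K_F+\Delta^+|_F)$ is pseudo-effective; combined with log canonicity of $(F,\Delta^+|_F)$ this feeds into bounding sections. The key manipulation is the identity, valid up to $f$-exceptional divisors and after clearing denominators by $l$,
\begin{align*}
 l(K_{X/Y}+\Delta^+) &= l(K_X+\Delta) - l f^*K_Y + l\Delta^- = -lL - l f^*(K_Y+D) + l\Delta^-,
\end{align*}
so that $\cO_X(l(K_{X/Y}+\Delta^+)+lL) \cong \cO_X(l(f^*(-K_Y-D)+\Delta^-))$ as reflexive sheaves. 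Pushing forward and using that $L$ is pseudo-effective (so a high multiple of $L$ has lots of sections relative to the base-locus stratification, and $\textbf{B}_-(L)$ is vertical), I would build a generically surjective map from a symmetric-type power of $f^*f_*\cO_X(m l(K_{X/Y}+\Delta^+))$ — twisted by the pullback of an ample and by sections coming from $L$ — onto the desired reflexive sheaf, pushed to the limit over $m$. The weak positivity of $f_*\cO_X(ml(K_{X/Y}+\Delta^+))$ (on a suitable birational model of $Y$) together with Lemma \ref{eg1}(1)--(2) then passes weak positivity down to $X$, at the cost of an effective $f$-exceptional divisor $B$ absorbing the discrepancies introduced by the birational model of $Y$ and by the non-canonicity of $Y$; when $Y$ has canonical singularities these discrepancies are effective in the right direction and can be discarded, giving $B=0$.

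The main obstacle is the careful bookkeeping of the vertical and exceptional divisors: passing to a log resolution of $(X,\Delta^+)$ and to a birational model of $Y$ over which the weak positivity theorem and Lemma \ref{eg1} apply, one introduces correction divisors on both sides, and one must check that (i) the part coming from $\Delta^-$ and from $\textbf{B}_-(L)$ stays vertical (this is where $f(\Supp\Delta^-)\neq Y$ and the non-surjectivity of $\textbf{B}_-(L)$ are used), and (ii) the exceptional part can be collected into a single effective $f$-exceptional $B$ and no horizontal contribution is lost. A secondary technical point is that $L$ is only pseudo-effective, not effective, so one cannot directly add sections of $L$; instead one works with $L+\varepsilon A$, takes sections of $m(L+\varepsilon A)$, and lets $\varepsilon\to 0$ in the end, which is compatible with the ``for all $n$ there exists $m$'' structure in the definition of weak positivity. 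The final ``in particular'' clause is then immediate: if $\Delta$ is effective then $\Delta^-=0$ and $B$ is $f$-exceptional and effective, so $f^*(-K_Y-D)+B$ is weakly positive on $X$, and Lemma \ref{eg1}(2) yields that $-K_Y-D$ is weakly positive on $Y$, i.e. pseudo-effective.
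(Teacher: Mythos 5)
Your overall strategy (the Ejiri--Gongyo framework, the identity $l(K_{X/Y}+\Delta^+)+lL=l(f^*(-K_Y-D)+\Delta^-)$, and the $L+\varepsilon A$ perturbation) is the right family of ideas, but there is a genuine gap at the central step: you apply the weak positivity theorem to $f_*\cO_X(ml(K_{X/Y}+\Delta^+))$ and plan to bring in the sections coming from $L$ only afterwards, by twisting the evaluation map. That order of operations does not work. Generic surjectivity of $f^*f_*\cG\rightarrow\cG$ for $\cG=\cO_X(ml(K_{X/Y}+\Delta^+))$ requires $ml(K_F+\Delta^+|_F)$ to be generically globally generated on the general fibre, and this fails in exactly the situations the theorem is aimed at: for instance if $F$ is Fano and $\Delta=0$, then $f_*\cO_X(mlK_{X/Y})=0$ and the evaluation map is zero. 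Tensoring that map by a line bundle, or composing with the inclusion given by an effective member of $|ml(L+\varepsilon A)|$ (which is an isomorphism at the generic point), does not improve its generic surjectivity. Relatedly, the assertion that ``a high multiple of $L$ has lots of sections'' is false for a merely pseudo-effective $L$; your later switch to $L+\varepsilon A$ supplies sections but does not repair the structural issue just described.

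The paper's proof resolves this by putting the perturbation into the boundary \emph{before} pushing forward: it first treats the case where $f$ is equidimensional, takes a log resolution $\pi:X'\rightarrow X$ with $K_{X'}+\Delta'=\pi^*(K_X+\Delta)+E$, and chooses a \emph{general} effective $\Gamma_n\sim_\bQ\pi^*(L+\tfrac1nA)$; since $\textbf{B}(L+\tfrac1nA)\subset\textbf{B}_-(L)$ does not dominate $Y$, such a $\Gamma_n$ can be chosen so that $(F',(\Delta'^++\Gamma_n+E^-)|_{F'})$ is still log canonical --- this, rather than verticality bookkeeping, is where the hypothesis on $\textbf{B}_-(L)$ is actually used. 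Campana's weak positivity theorem \cite[Theorem 4.13]{C} applied to $f'_*\cO_{X'}(kl(K_{X'}+\Delta'^++\Gamma_n+E^-))\otimes\cO_Y(-klK_Y)$ then provides the weakly positive source, and the evaluation map is generically surjective precisely because $K_{X'}+\Delta'^++\Gamma_n+E^-\sim_\bQ E^++\Delta'^--\pi^*f^*D+\tfrac1n\pi^*A$ restricts, on a suitable open set whose complement has codimension at least $2$ or is vertical, to the ample twist $\tfrac1n\pi^*A$; letting $n\rightarrow\infty$ matches the quantifier structure in the definition of weak positivity. The general case is then reduced to the equidimensional one by the flattening lemma of Abramovich--Oort, with $B:=\pi_*(f'^*E^-)$ where $K_{Y'}=\mu^*K_Y+E^+-E^-$, so that $B=0$ when $Y$ has canonical singularities; your sketch gestures at this reduction but needs it, and the choice of $\Gamma_n$ inside the boundary, to be made explicit for the argument to close.
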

\begin{proof}
The proof is based on the methods in the proof of \cite[Theorem 3.1]{EG}, modified with ideas in \cite[Main Theorem]{CZ}. First, we prove the case where $f$ is equidimensional \footnote{In this case, since the pullback of $\bQ$-Weil divisors is well-defined, we do not need to assume $-K_Y$ is $\bQ$-Cartier.}. Let
\begin{align*}
    \cF:=\cO_X(l(f^*(-K_Y-D)+\Delta^-)),
\end{align*}
and $A$ be an ample divisor on $X$, then by definition, it suffices to show that for all $n\in\bN$, there is some $m\in\bN$ such that the sheaf $(\cF^{\otimes nm}\otimes \cO_X(mlA))|_U$ is weakly positive on some Zariski open subset $U$ of $X$ with ${\rm codim}(X-U)\geq 2$. Since $L$ is pseudo-effective, for any $n\in \bN$, the $\bQ$-Cartier divisor $G_n:=L+\frac{1}{n}A$ is big with $\textbf{B}(G_n)\subset \textbf{B}_-(L).$ Therefore, $\textbf{B}(G_n)$ is a proper Zariski closed subset that does not dominate $Y$. Let $\pi:X'\rightarrow X$ be a birational morphism such that $\pi^{-1}(\Supp \textbf{B}(G_n)\cup\Supp\Delta)\cup \Exc(\pi)$ is a snc divisor. 
Let $f':=f\circ\pi :X'\rightarrow Y$ and write $$K_{X'}+\Delta'=\pi^*(K_X+\Delta)+E,$$ where $\Delta'$ is the proper transform of $\Delta$. The effective decomposition yields $(\Delta')^{\pm}=(\Delta^{\pm})'$ and we can write $E=E^+-E^-$. Let $F$ be a general fibre of $f$ and $F' = \pi^{-1}(F)$, then every component of $E^-|_{F'}$ has coefficient at most 1 since $(F, \Delta|_F)$ is log canonical. Pick $\Gamma_n \sim_\bQ \pi^*G_n$ be a general divisor such that $(F',(\Delta'^++\Gamma_n+E^-)|_{F'})$ is log canonical, then by \cite[Theorem 4.13]{C}, for $k\gg 0$ the sheaf $$f'_*\cO_{X'}(kl(K_{X'}+\Delta'^+ +\Gamma_n+E^-))\otimes\cO_Y(-klK_Y)$$ is locally free and weakly positive on a smooth open subvariety $Y_0\subset Y$ with codim$(Y-Y_0)\geq 2$, hence the sheaf is weakly positive on $Y$. Therefore, for $m\gg 0$, we have the following generically surjective morphisms
\begin{align*}
    &f^*(f'_*\cO_{X'}(nml(K_{X'}+\Delta'^++\Gamma_n+E^-)\otimes\cO_Y(-nmlK_Y))\\
    \cong &f^*f_*\pi_*\cO_{X'}(nml(K_{X'}+\Delta'^++\Gamma_n+E^-))\otimes f^*\cO_Y(-nmlK_Y)\\
    \rightarrow&\pi_*\cO_{X'}(nml(K_{X'}+\Delta'^++\Gamma_n+E^-))\otimes f^*\cO_Y(-nmlK_Y)\\
    =&\pi_*\cO_{X'}(nml(E^++\Delta'^--\pi^*f^*D+\frac{1}{n}\pi^*A))\otimes f^*\cO_Y(-nmlK_Y)=:\cG.
\end{align*}
By \cite[Theorem 1.7.6]{La}, the map $f^*f_*(-)\rightarrow (-)$ is generically surjective since on the open dense subset $X-(\Supp(\Delta^-)\cup\Supp(f^*D))$, the sheaf $$\pi_*\cO_{X'}(nml(K_{X'}+\Delta'^++\Gamma_n+E^-))=\pi_*\cO_{X'}(nml(E^++\Delta'^--\pi^*f^*D+\frac{1}{n}\pi^*A))$$ is isomorphic to the ample line bundle $\cO_X(mlA),$ and $\Supp(\Delta^-)\cup\Supp(f^*D)$ does not dominate $Y$. As $f$ is equidimensional, there is no $f$-exceptional divisor, thus $\cG$ is weakly positive by Lemma \ref{eg1}(1) and Lemma \ref{eg2}.

Let $\cG_1: =(\pi_*\cO_{X'}(nml(\Delta'^--\pi^*f^*D+\frac{1}{n}\pi^*A))\otimes f^*\cO_Y(-nmlK_Y))$.
Since $E^+$ is effective and $\pi$-exceptional, there is a smooth open subvariety $U\subset X$ with ${\rm codim} (X-U, X) \ge 2$  such that $\cF$ and $\cG$ are locally free on $U$ and
$\cG|_U \cong \cG_1|_U$.

Therefore, $\cG_1$ is weakly positive since ${\rm codim}(X-U, X)\geq 2$. By the projection formula, we have  
\begin{align*}
  \cG_1|_U  \cong&(\cO_{X}(nml(\Delta^--f^*D+\frac{1}{n}A))\otimes f^*\cO_Y(-nmlK_Y))|_U\\
    \cong&(\cO_{X}(nml(\Delta^--f^*(K_Y+D)))\otimes\cO_X(mlA))|_U\\
    \cong&(\cF^{\otimes nm}\otimes \cO_X(mlA))|_U,
\end{align*}  
where the last isomorphism is due to $\cF|_U$ being a line bundle by our choice of $U$. $(\cF^{\otimes nm}\otimes \cO_X(mlA))|_U$ is weakly positive on $U$. It follows that $\cF^{\otimes nm}\otimes \cO_X(mlA)$ is weakly positive on $X$ since ${\rm codim}(X-U, X)\geq 2$. Lastly, $n$ can be taken to be arbitrarily large, thus $\cF$ is weakly positive.

The proof for the general case follows verbatim to the argument of \cite[Theorem 3.1]{EG}. For the convenience of the readers, we reproduce the proof here. 

By the flattening theorem in \cite[3.3, flattening lemma]{AO}, there is a normal birational modification $\mu:Y'\rightarrow Y$ such that let and $X'$ be the normalization of the main component of $X\times_{Y} Y'$, then $\pi:X'\rightarrow X$ is proper birational and the induced morphism $f':X'\rightarrow Y'$ is equidimensional\\
\begin{center}
    \begin{tikzcd}
&X'\arrow[r,"\pi"]\arrow[d,"f'"] &X\arrow[d,"f"]\\
&Y'\arrow[r,"\mu"] &Y.
\end{tikzcd}
\end{center}
Now we define $\Delta'$ by $$K_{X'}+\Delta'=\pi^*(K_X+\Delta),$$ and write $\Delta'=\Delta'^+-\Delta'^-$, then we have $$-(K_{X'}+\Delta')-\pi^*f^*D=\pi^*(-(K_X+\Delta)-f^*D)$$ is pseudo-effective with the restricted base locus does not surject onto $Y'$. Therefore, $\cO_{X'}(l(f^*(-K_Y'-\mu^*D)+\Delta'^-))$ is weakly positive since $f'$ is equidimensional. Now write $K_{Y'}=\mu^*K_Y+E$, and $E=E^+-E^-$, then we have $$\mu^*(-K_Y)+E^-=-K_{Y'}+E^+\geq -K_{Y'}.$$ Thus $\cO_{X'}(l(f'^*(\mu^*(-K_Y)+E^--\mu^*D)+\Delta'^-))$ is also weakly positive, and so does $\cO_{X}(l(f^*(-K_Y-D)+\Delta^-+\pi_*(f'^*E^-)))$. Now, define $B:=\pi_*(f'^*E^-)$, then since $f'$ is equidimensional, we have $f_*B=\mu_*E^-=0$. Also, if $Y$ has at worst canonical singularities, then $E^-=0$. This completes the proof. 
\end{proof}

By Theorem \ref{pe}, we generalize \cite[Corollary 2.3]{CZ}.

\begin{corollary}
Let $f:X\rightarrow Y$ be an algebraic fibre space between normal projective varieties such that $Y$ is $\bQ$-Gorenstein. Suppose there exists a log pair $(X,\Delta)$ such that $-(K_X+\Delta)$ is pseudo-effective and $\bQ$-Cartier with the restricted base locus $\textbf{B}_-(-(K_X+\Delta))$ not surject onto $Y$, and for general fibre $F$ of $f$, $(F,\Delta|_F)$ is log canonical. Then either $Y$ is uniruled, or $K_Y\sim_{\bQ} 0$.
\end{corollary}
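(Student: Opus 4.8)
The plan is to apply Theorem \ref{pe} with the divisor $D=0$, so that we immediately conclude $-K_Y$ is pseudo-effective. The remaining task is the standard dichotomy: a pseudo-effective canonical divisor that is also (up to taking the negative) pseudo-effective forces $K_Y\sim_\bQ 0$, unless the variety is uniruled. First I would recall that $-K_Y$ being pseudo-effective means $K_Y$ is anti-pseudo-effective. If $K_Y$ is \emph{not} pseudo-effective, then by the fundamental result of Boucksom--Demailly--P\u{a}un--Peternell, $Y$ is uniruled, and we are in the first case. So the interesting case is when $K_Y$ is pseudo-effective.

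In that case we have both $K_Y$ and $-K_Y$ pseudo-effective. The next step is to invoke the fact that if a $\bQ$-Cartier divisor $N$ and $-N$ are both pseudo-effective, then $N\equiv 0$ numerically; this follows because the pseudo-effective cone is a salient (strongly convex) cone in $N^1(Y)_\bR$, so it contains no line through the origin. Hence $K_Y\equiv 0$. To upgrade numerical triviality to $\bQ$-linear triviality, I would note that the argument of Theorem \ref{pe} with $D=0$ in fact produces a weakly positive sheaf $\cO_X(l(f^*(-K_Y)+\Delta^-+B))$ with $f_*B=0$; but more simply, once $K_Y\equiv 0$ one may either pass to a resolution and apply the abundance-type statement for numerically trivial canonical classes, or cite the standard fact (e.g.\ via \cite{KM98} together with the structure of pseudo-effective/nef classes) that a $\bQ$-Gorenstein variety with $K_Y\equiv 0$ has $K_Y\sim_\bQ 0$ when $K_Y$ is pseudo-effective. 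Actually the cleanest route: $-K_Y$ pseudo-effective and $K_Y$ pseudo-effective gives $-K_Y$ is in the pseudo-effective cone and so is $K_Y$; since $-K_Y$ is $\bQ$-effective would be too strong, I instead use that a numerically trivial $\bQ$-Cartier divisor on a normal projective variety need not be torsion in general, so I must be a little careful.

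To handle the torsion issue rigorously, I would argue as follows: since $-K_Y$ is pseudo-effective and $Y$ is not uniruled, $K_Y$ is pseudo-effective by \cite{BDPP}-type results, hence $K_Y\equiv 0$ as above. Now run a terminalization or $\bQ$-factorial dlt modification $g:Y'\to Y$ with $K_{Y'}=g^*K_Y+E$, $E\ge 0$ exceptional; then $K_{Y'}\equiv g^*K_Y + E \equiv E$ is pseudo-effective and $g$-exceptional-supported, which forces $E=0$ (negativity lemma, since $K_{Y'}$ pseudo-effective and $-K_{Y'}=-g^*K_Y$ is also pseudo-effective, so $K_{Y'}\equiv 0$, and an exceptional effective divisor numerically equivalent to $0$ over $Y$ vanishes). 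So $Y$ has canonical singularities with $K_{Y'}\equiv 0$, and by the abundance theorem for $\kappa=0$ (Nakayama, or Kawamata for the numerically trivial case) $K_{Y'}\sim_\bQ 0$, hence $K_Y\sim_\bQ 0$.

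The main obstacle I anticipate is precisely the passage from $K_Y\equiv 0$ to $K_Y\sim_\bQ 0$: numerical triviality of a pseudo-effective canonical class is not automatically $\bQ$-linear triviality without invoking a nontrivial input (abundance in the relevant case, or Nakayama's results on numerically trivial divisors, $\nu$-dimension zero). Everything else is formal: Theorem \ref{pe} does the real work of getting $-K_Y$ pseudo-effective, and the cone-theoretic step (pseudo-effective cone is salient) plus \cite{BDPP} is routine. I would present the proof in the order: (1) apply Theorem \ref{pe} with $D=0$ to get $-K_Y$ pseudo-effective; (2) if $K_Y$ not pseudo-effective, $Y$ is uniruled by \cite{BDPP}, done; (3) otherwise $K_Y\equiv 0$ by salience of the pseudo-effective cone; (4) conclude $K_Y\sim_\bQ 0$ via the abundance/Nakayama input after a crepant modification to reduce to canonical singularities.
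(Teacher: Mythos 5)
Your steps (1)--(3) coincide with the paper's own argument: apply Theorem \ref{pe} with $D=0$ (the hypotheses are satisfied since $\Delta\geq 0$ gives $\Delta^-=0$, and the last assertion of Theorem \ref{pe} yields $-K_Y$ pseudo-effective), and if $K_Y$ is not pseudo-effective conclude that $Y$ is uniruled by \cite{BDPP}. The paper then finishes in one line by citing \cite[Corollary 4.9]{N}, applied directly to $Y$, to pass from ``$K_Y$ and $-K_Y$ both pseudo-effective'' to $K_Y\sim_{\bQ}0$; this is also the option you mention in passing, and it is the correct one.

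The genuine problem is the argument you offer as the ``rigorous'' handling of the torsion issue. You take a terminalization or $\bQ$-factorial dlt modification $g:Y'\rightarrow Y$ and write $K_{Y'}=g^*K_Y+E$ with $E\geq 0$ exceptional. But in this corollary $Y$ is only assumed to be normal and $\bQ$-Gorenstein: nothing forces $Y$ to have canonical singularities (which is what $E\geq 0$ amounts to), nor even to be log canonical (which is what the existence of a dlt modification requires), so this reduction is simply not available. The justification in your parenthesis is also circular: $-K_{Y'}=-g^*K_Y-E$, and you cannot identify this with $-g^*K_Y$ (whose pseudo-effectivity you do know) without already knowing $E=0$, since pseudo-effectivity is not preserved under subtracting an effective divisor. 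The fix is to delete the crepant-modification detour entirely and, exactly as in \cite[Corollary 2.3]{CZ} and the paper, quote \cite[Corollary 4.9]{N} for $Y$ itself once both $K_Y$ and $-K_Y$ are known to be pseudo-effective; no reduction to better singularities is needed, and the rest of your proposal then matches the paper's proof.
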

\begin{proof}
The proof follows from Theorem \ref{pe} by repeating the original proof in \cite[Corollary 2.3]{CZ}. Suppose $Y$ is not uniruled, then $K_Y$ is pseudo-effective by \cite[Theorem 2.6]{BDPP}. However, by Theorem \ref{pe}, $-K_Y$ is pseudo-effective. This implies that $K_Y\sim_{\bQ} 0$ by \cite[Corollary 4.9]{N}.
\end{proof}

\begin{corollary}
Let $f:(X, \Delta) \rightarrow Y$ be an algebraic fibre space between normal projective varieties such that $(X,\Delta)$ is klt and  $Y$ is $\bQ$-Gorenstein. Suppose furthermore that $-(K_X+\Delta)$ is pseudo-effective with the non-nef locus ${\rm NNef}(-(K_X+\Delta))$ not surject onto $Y$, then $-K_Y$ is pseudo-effective.
\end{corollary}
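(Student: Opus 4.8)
The claim is that for $f:(X,\Delta)\to Y$ with $(X,\Delta)$ klt, $Y$ $\bQ$-Gorenstein, $-(K_X+\Delta)$ pseudo-effective and $\mathrm{NNef}(-(K_X+\Delta))$ not surjecting onto $Y$, one gets $-K_Y$ pseudo-effective. The natural route is to deduce this directly from Theorem \ref{pe}, taking $D=0$. Since $(X,\Delta)$ is klt we have $\Delta=\Delta^+$ effective and $\Delta^-=0$, so the hypothesis $f(\Supp\Delta^-)\neq Y$ is vacuous, and $(F,\Delta|_F)$ is klt (hence log canonical) for a general fibre $F$ because klt is preserved under restriction to general fibres. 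The only gap between the hypotheses of the corollary and those of Theorem \ref{pe} is that Theorem \ref{pe} is phrased in terms of $\mathbf{B}_-(L)$ while the corollary is phrased in terms of $\mathrm{NNef}(L)$, where $L=-(K_X+\Delta)$.

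First I would recall that for a pseudo-effective $\bQ$-Cartier divisor $L$ on a normal projective variety one has the equality $\mathrm{NNef}(L)=\mathbf{B}_-(L)$ (this is Nakayama's identification of the non-nef locus with the diminished base locus; in the klt/$\bQ$-factorial setting it is standard, and the excerpt itself alludes to it in the discussion following Theorem \ref{pe}, noting the subtlety only arises for singularities worse than klt). Granting this, the condition ``$\mathrm{NNef}(-(K_X+\Delta))$ does not surject onto $Y$'' is literally the condition ``$\mathbf{B}_-(L)$ does not surject onto $Y$'' of Theorem \ref{pe}. Then I would invoke Theorem \ref{pe} with $D=0$: it produces an effective $f$-exceptional $\bQ$-divisor $B$ such that $\cO_X(l(f^*(-K_Y)+\Delta^-+B))=\cO_X(l(f^*(-K_Y)+B))$ is weakly positive, and since $B$ is $f$-exceptional and effective, Lemma \ref{eg1}(2) gives that $\cO_Y(l(-K_Y))$ is weakly positive, i.e. (as this is a line bundle) $-K_Y$ is pseudo-effective.

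Alternatively, and perhaps cleaner to state, I would note that the corollary is exactly the special case $\Delta^-=0$, $D=0$ of the final sentence of Theorem \ref{pe}, once the translation $\mathrm{NNef}=\mathbf{B}_-$ is made; so the proof is essentially a one-line citation. I would write: ``Since $(X,\Delta)$ is klt, $\Delta$ is effective and $(F,\Delta|_F)$ is klt for general $F$; moreover $\mathbf{B}_-(-(K_X+\Delta))=\mathrm{NNef}(-(K_X+\Delta))$ by \cite[Theorem 1.2]{CDiB} (or \cite{N}), which by hypothesis does not surject onto $Y$. Applying Theorem \ref{pe} with $D=0$ and $\Delta$ effective, $-K_Y$ is pseudo-effective.''

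**Main obstacle.** There is no real difficulty here; the only point requiring care is the identification $\mathrm{NNef}(L)=\mathbf{B}_-(L)$ for the pseudo-effective $\bQ$-Cartier divisor $L=-(K_X+\Delta)$. For $\bQ$-Cartier divisors on a normal projective variety this is a theorem of Nakayama (and Boucksom--Broustet--Pacienza / Cacciola--Di Biagio in the cited references), but it is worth pinning down which normality/$\bQ$-factoriality hypothesis one needs — the excerpt explicitly flags that the equality can fail for singularities worse than klt. Since here $(X,\Delta)$ is klt, $X$ itself need not be $\bQ$-factorial, but $K_X+\Delta$ is $\bQ$-Cartier by assumption on the pair, and that (together with klt, hence rational singularities) suffices for the identification; I would simply cite the appropriate statement rather than reprove it.
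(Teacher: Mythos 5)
Your proposal is correct and is essentially the paper's own argument: the paper proves this corollary by citing \cite[Theorem 1.2]{CDiB} to identify ${\rm NNef}(-(K_X+\Delta))$ with $\textbf{B}_-(-(K_X+\Delta))$ in the klt setting and then applying Theorem \ref{pe} with $D=0$ and $\Delta$ effective, exactly as you do.
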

\begin{proof}
It directly follows from \cite[Theorem 1.2]{CDiB} and Theorem \ref{pe}.
\end{proof}

\begin{corollary}\label{CEG}
The following theorems \cite[Theorem 4.2, Proposition 4.4, Corollary 4.5, Corollary 4.7, and Corollary 5.1]{EG} still hold under the assumption $L$ is pseudo-effective with $\textbf{B}_-(L)$ not surject onto $Y$ (instead of being nef).
\end{corollary}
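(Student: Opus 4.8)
The plan is to observe that, in each of the results \cite[Theorem 4.2, Proposition 4.4, Corollary 4.5, Corollary 4.7, Corollary 5.1]{EG}, the hypothesis that $L=-(K_X+\Delta)$ (or $-(K_X+\Delta)-f^*D$) be \emph{nef} enters the argument \emph{only} through an application of \cite[Theorem 3.1]{EG}, whose output is the weak positivity of $\cO_X(l(f^*(-K_Y-D)+\Delta^-+B))$ for an effective $f$-exceptional $B$, and hence the pseudo-effectiveness of $-K_Y-D$ when $\Delta$ is effective. Since Theorem \ref{pe} delivers exactly this same conclusion under the weaker hypothesis ``$L$ pseudo-effective and $\textbf{B}_-(L)$ does not surject onto $Y$'', the proof of each item is obtained from the proof in \cite{EG} by replacing, at the single step where \cite[Theorem 3.1]{EG} is cited, that citation by Theorem \ref{pe}.

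Concretely I would go through the list one by one. For \cite[Theorem 4.2]{EG} and \cite[Proposition 4.4]{EG} the argument combines \cite[Theorem 3.1]{EG} with pseudo-effectiveness criteria for $K_Y$ (via \cite[Theorem 2.6]{BDPP} and Nakayama's theory, e.g. \cite{N}); these inputs concern only divisors on $Y$ and never see the nefness of $L$, so the substitution goes through unchanged. The remaining items \cite[Corollary 4.5, Corollary 4.7, Corollary 5.1]{EG} are formal consequences of the previous two together with standard facts on uniruledness, the MRC fibration, and the Albanese morphism, none of which involves $L$ at all; so once the first two are re-proved, these follow verbatim. One should also record that the conclusions being asserted are unchanged — pseudo-effectiveness, or structural statements about $Y$, rather than nefness of $-K_Y$ — so that no positivity strictly stronger than the output of Theorem \ref{pe} is ever required.

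The main obstacle, and really the only point needing care, is the bookkeeping that at every place where \cite{EG} applies \cite[Theorem 3.1]{EG}, the intermediate divisor in question still satisfies the hypotheses of Theorem \ref{pe}: pseudo-effectiveness, ``$\textbf{B}_-$ does not surject onto the base'', and fibrewise log canonicity. The potentially delicate operations are passing to a log resolution or to the flattening modification (handled exactly as in the proof of Theorem \ref{pe}, by pullback, which can only shrink $\textbf{B}_-$ up to exceptional divisors), twisting by effective exceptional divisors, and — in the inductive structural arguments — restricting to a general fibre $F$, where ``$\textbf{B}_-(L)$ does not surject onto $Y$'' guarantees that for general $F$ one has $F\not\subset\textbf{B}_-(L)$, so $L|_F$ is pseudo-effective and $\textbf{B}_-(L|_F)$ is a proper subset of $F$ (cf. \cite{ELMNP}). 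Once this compatibility of $\textbf{B}_-$ with the relevant operations is recorded — and the needed instances are already handled inside the proof of Theorem \ref{pe} — every invocation of \cite[Theorem 3.1]{EG} in \cite{EG} can be swapped for Theorem \ref{pe}, which establishes the corollary.
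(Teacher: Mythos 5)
Your proposal is correct and matches the paper's own argument: the paper likewise proves this by substituting Theorem \ref{pe} for \cite[Theorem 3.1]{EG} at the places where nefness of $L$ is used, noting (as your bookkeeping discussion implicitly does) that pseudo-effectiveness and $\textbf{B}_-(\cdot)$ depend only on the numerical class, so the remaining steps of the proofs in \cite{EG} go through unchanged.
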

\begin{proof}
Pseudo-effectiveness and $\textbf{B}_-(-)$ depends on the numerical equivalence classes only. So, in the proofs of all the above-mentioned theorems in \cite{EG}, we can replace the nefness of $L$ in \cite{EG} by $L$ being pseudo-effective with $\textbf{B}_-(L)$ not surject onto $Y$. Using Theorem \ref{pe} instead of \cite[Theorem 3.1]{EG}, then the same conclusions hold.
\end{proof}

As a corollary of Theorem \ref{pe}, we have a bigness criterion for $-K_Y-D$, whose statement is very similar to \cite[Theorem 3.1]{FG12}, but they cover different cases.

\begin{corollary}\label{FG0}
Keep the same notation as in Theorem \ref{pe}. Suppose there is a big $\bQ$-Cartier divisor $H$ on $Y$ such that $-(K_X+\Delta)-f^*(D+H)$ is pseudo-effective with ${\bf B}_-(-(K_X+\Delta)-f^*(D+H))$ not surject onto $Y$. Then for any $\bQ$-Cartier divisor $D_0$ on $Y$, there is a rational number $\varepsilon>0$ such that for sufficiently divisible positive integer $l$, the sheaf $$\cO_X(l(f^*(-K_Y-D-\varepsilon D_0)+\Delta^-+B))$$ is weakly positive for some effective  $f$-exceptional $\bQ$-divisor $B$.  We can take $B=0$ if $Y$ has at worst canonical singularities. 
\end{corollary}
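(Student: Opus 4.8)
\textbf{Proof proposal for Corollary \ref{FG0}.}

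The plan is to reduce the bigness-type statement to an application of Theorem \ref{pe} by perturbing $D$ with a small multiple of $D_0$ and absorbing the error into the big divisor $H$. First I would write $D_0 = H_0 - A_0$ where $H_0$ and $A_0$ are both ample $\bQ$-Cartier on $Y$; such a decomposition exists for any $\bQ$-Cartier divisor on a projective variety. Since $H$ is big, there is a rational $\delta > 0$ with $H - \delta H_0$ still big (indeed $H - \delta H_0 \sim_\bQ$ effective $+$ ample for small $\delta$), hence pseudo-effective. The idea is then that for the chosen $\varepsilon$ (to be specified), the divisor
\begin{align*}
-(K_X+\Delta) - f^*(D + \varepsilon D_0) = \big[-(K_X+\Delta) - f^*(D+H)\big] + f^*\big(H - \varepsilon D_0\big),
\end{align*}
and I want the second summand to be pseudo-effective with controlled base locus.

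Next I would make the choice of $\varepsilon$ precise. Writing $H - \varepsilon D_0 = (H - \varepsilon H_0) + \varepsilon A_0$, for $\varepsilon \le \delta$ the term $H - \varepsilon H_0$ is big (hence $\bQ$-effective) and $\varepsilon A_0$ is ample; so $f^*(H-\varepsilon D_0)$ is $\bQ$-effective, and more importantly its restricted base locus is contained in the support of a fixed effective divisor $\sim_\bQ f^*(H - \varepsilon H_0)$ (using $\textbf{B}_-(N + \text{ample}) \subseteq \textbf{B}(N)$ and semicontinuity). I then need $\textbf{B}_-$ of the sum to not surject onto $Y$: here I would use $\textbf{B}_-(P + Q) \subseteq \textbf{B}_-(P) \cup \textbf{B}_-(Q)$ together with the hypothesis that $\textbf{B}_-(-(K_X+\Delta)-f^*(D+H))$ does not surject onto $Y$. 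The subtlety is that $\textbf{B}_-(f^*(H-\varepsilon D_0))$ could a priori surject onto $Y$ if $H - \varepsilon H_0$, though big, has base locus components dominating $Y$. To handle this cleanly I would instead absorb only an ample part: choose $\varepsilon$ so small that $H - \varepsilon D_0$ is actually ample — possible since $H$ big means $H \sim_\bQ E + A'$ with $E \ge 0$ effective and $A'$ ample, and then $H - \varepsilon D_0 \sim_\bQ E + (A' - \varepsilon D_0)$ with $A' - \varepsilon D_0$ ample for $\varepsilon \ll 1$; alternatively pick $\varepsilon$ with $A' - \varepsilon D_0$ ample and note $H - \varepsilon D_0 \sim_\bQ E + (\text{ample})$ is big with $\textbf{B}_-(H-\varepsilon D_0) \subseteq \Supp E$, which need not dominate $Y$ but might. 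The genuinely safe route: since we only need pseudo-effectivity of the sum with non-surjecting $\textbf{B}_-$, write $-(K_X+\Delta) - f^*(D+H) =: L'$ which is pseudo-effective, and note $L' + f^*(H - \varepsilon D_0)$ has $\textbf{B}_- \subseteq \textbf{B}_-(L') \cup \Supp(f^*M)$ where $M \sim_\bQ H - \varepsilon D_0$ is chosen effective; choosing $\varepsilon$ small enough that $H - \varepsilon D_0$ remains big, and then choosing the effective representative $M$ of a multiple of $H - \varepsilon D_0$ whose vertical-and-horizontal support we may take, via moving in the big linear system $|k(H-\varepsilon D_0)|$ away from the generic point of $Y$, to not contain a fibre — since $|k(H - \varepsilon D_0)|$ defines a birational map for $k \gg 0$, a general member does not contain the general fibre $f^{-1}(y)$. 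Thus $\textbf{B}_-(L' + f^*(H-\varepsilon D_0))$ does not surject onto $Y$.

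With $\varepsilon$ and the base-locus control in hand, I apply Theorem \ref{pe} directly to $f:X\to Y$, the same klt pair $(X,\Delta)$, and the divisor $D + \varepsilon D_0$ in place of $D$: the hypothesis there is exactly that $-(K_X+\Delta) - f^*(D+\varepsilon D_0)$ is pseudo-effective with $\textbf{B}_-$ not surjecting onto $Y$, which we have just arranged, and the klt condition gives the log canonicity of $(F, \Delta^+|_F)$ on general fibres and $f(\Supp\Delta^-)\neq Y$ automatically. Theorem \ref{pe} then yields that $\cO_X(l(f^*(-K_Y - D - \varepsilon D_0) + \Delta^- + B))$ is weakly positive for suitably divisible $l$ and some effective $f$-exceptional $B$, with $B = 0$ when $Y$ has at worst canonical singularities. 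The main obstacle, as indicated above, is ensuring the restricted base locus of the perturbed divisor does not surject onto $Y$ — everything else is a formal consequence of Theorem \ref{pe} and standard manipulations of $\textbf{B}_-$; I expect the cleanest write-up to hinge on the observation that a general member of the big linear system $|k(H-\varepsilon D_0)|$ does not contain a general fibre of $f$.
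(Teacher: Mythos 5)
Your proposal is correct and is essentially the paper's own argument: the paper proves Corollary \ref{FG0} simply by invoking Theorem \ref{pe} (choose $\varepsilon$ small using bigness of $H$ so that $-(K_X+\Delta)-f^*(D+\varepsilon D_0)$ still satisfies its hypotheses), which is exactly the reduction you carry out. The base-locus point you labor over is in fact automatic: for any effective $M\sim_{\bQ} H-\varepsilon D_0$ on $Y$ one has $\Supp(f^*M)=f^{-1}(\Supp M)$, which can never surject onto $Y$ since $\Supp M\subsetneq Y$, so $\textbf{B}_-\bigl(-(K_X+\Delta)-f^*(D+\varepsilon D_0)\bigr)\subseteq \textbf{B}_-\bigl(-(K_X+\Delta)-f^*(D+H)\bigr)\cup f^{-1}(\Supp M)$ already fails to surject onto $Y$, with no need to pass to a general member of the linear system or to worry about fibres being contained in it.
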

\begin{proof}
This statement follows immediately from Theorem \ref{pe}.
\end{proof}

\begin{remark}
{\em In Corollary \ref{FG0}, if $\Delta\geq 0$, this means for any $\bQ$-Cartier divisor $D_0$ on $Y$, there is a rational number $\varepsilon>0$ such that $-K_Y-D-\varepsilon D_0$ is pseudo-effective, hence $-K_Y-D\in{\rm Int(Eff}(Y))={\rm Big}(Y)$ is big. }
\end{remark}

As another application of Theorem \ref{pe}, we have another bigness criterion of $-K_Y$ in the following. The reader can compare our results with \cite[Theorem E]{Den} and \cite[Corollary 3.5 and Remark 3.6]{EIM}.

\begin{theorem}\label{big}
Under the same notation and assumption as in Theorem \ref{pe}. Suppose furthermore that  $L:=-(K_X+\Delta)-f^*D$ is big, and one of the following holds:
\begin{enumerate}
    \item $\textbf{B}_+(L)$ does not dominate $Y$;
    \item $\textbf{B}_-(L)$ does not surject onto $Y$, and $(F,\Delta^+_F)$ is klt.
\end{enumerate}
Then for any $\bQ$-Cartier divisor $D_0$ on $Y$, there is a rational number $\varepsilon>0$ such that for sufficiently large and divisible integer $l$, the sheaf $$\cO_X(l(f^*(-K_Y-D-\varepsilon D_0)+\Delta^-+B))$$ is weakly positive for some effective $f$-exceptional $\bQ$-divisor $B$. We can take $B=0$ if $Y$ has at worst canonical singularities. 

In particular, if $\Delta\geq 0$, then $-K_Y-D$ is big.
\end{theorem}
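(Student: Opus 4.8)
\textbf{Proof plan for Theorem \ref{big}.}

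The plan is to reduce the bigness statement to the pseudo-effectiveness machinery already established in Theorem \ref{pe} (and packaged in Corollary \ref{FG0}), by perturbing $L$ so that a small ample chunk is peeled off on $Y$. The key observation is that in both cases (1) and (2) the bigness of $L$ together with the base-locus hypothesis allows us to write $L$, up to numerical (or $\bQ$-linear) equivalence, as $f^*H + L'$ where $H$ is a small ample $\bQ$-divisor on $Y$ and $L'$ is still pseudo-effective with $\textbf{B}_-(L')$ not surjecting onto $Y$. Indeed, first I would choose an ample $\bQ$-divisor $A_Y$ on $Y$; since $L$ is big, for a sufficiently small rational $\delta>0$ the divisor $L-\delta f^*A_Y$ remains big, and one checks that $\textbf{B}_-(L-\delta f^*A_Y)$ (resp. $\textbf{B}_+$) is contained in $\textbf{B}_-(L)\cup f^{-1}(\textbf{B}_+(A_Y))=\textbf{B}_-(L)$ (resp. in $\textbf{B}_+(L)$), because $f^*A_Y$ has empty augmented base locus over the generic point of $Y$. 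Hence the base-locus hypothesis is inherited: in case (1), $\textbf{B}_+(L-\delta f^*A_Y)\subset\textbf{B}_+(L)$ does not dominate $Y$, so a fortiori its restricted base locus does not surject onto $Y$; in case (2), $\textbf{B}_-(L-\delta f^*A_Y)\subset\textbf{B}_-(L)$ does not surject onto $Y$ and the klt hypothesis on $(F,\Delta^+|_F)$ is untouched.

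Next I would apply Corollary \ref{FG0} with the roles played by: the original boundary $\Delta$, the divisor $D+\delta A_Y$ in place of $D$, and the big divisor $H=\delta A_Y$. Concretely, $-(K_X+\Delta)-f^*((D+\delta A_Y)) = L-\delta f^*A_Y$ is pseudo-effective with restricted base locus not surjecting onto $Y$ by the previous paragraph, and $-(K_X+\Delta)-f^*((D+\delta A_Y))-f^*H$ with $H=\delta A_Y$ equals $L-2\delta f^*A_Y$, which is again pseudo-effective (shrink $\delta$ if necessary) with the same base-locus property. So the hypotheses of Corollary \ref{FG0} are met, and it yields: for every $\bQ$-Cartier $D_0$ on $Y$ there is $\varepsilon>0$ such that $\cO_X(l(f^*(-K_Y-(D+\delta A_Y)-\varepsilon D_0)+\Delta^-+B))$ is weakly positive for suitably divisible $l$, with $B$ effective $f$-exceptional (and $B=0$ when $Y$ is canonical). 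Re-absorbing $\delta A_Y$ into the statement — i.e. replacing $\varepsilon D_0$ by $\varepsilon D_0 + \delta A_Y$, which is harmless since $D_0$ was arbitrary and $\delta$ can be taken $\le\varepsilon'$ for the relevant $\varepsilon'$ — gives exactly the asserted weak positivity of $\cO_X(l(f^*(-K_Y-D-\varepsilon D_0)+\Delta^-+B))$.

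For the final clause, suppose $\Delta\ge 0$, so $\Delta^-=0$. Then weak positivity of $\cO_X(l(f^*(-K_Y-D-\varepsilon D_0)+B))$ together with Lemma \ref{eg1}(2) (the divisor $B$ being effective and $f$-exceptional) shows that $f^*(-K_Y-D-\varepsilon D_0)$ pulls back a weakly positive sheaf, equivalently $-K_Y-D-\varepsilon D_0$ is pseudo-effective on $Y$; here one uses that a line bundle is weakly positive iff its class is pseudo-effective, and descends pseudo-effectiveness along $f^*$ (no $f$-exceptional divisors appear on $Y$ itself). Since $D_0$ is an arbitrary $\bQ$-Cartier divisor and $-K_Y-D-\varepsilon D_0$ is pseudo-effective for some $\varepsilon>0$ depending on $D_0$, the class $-K_Y-D$ lies in the interior of the pseudo-effective cone, i.e. $-K_Y-D\in\mathrm{Int}(\mathrm{Eff}(Y))=\mathrm{Big}(Y)$, so $-K_Y-D$ is big.

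The main obstacle I expect is the bookkeeping in the first paragraph: verifying cleanly that the perturbation $L\mapsto L-\delta f^*A_Y$ preserves the precise base-locus hypotheses — in particular that $\textbf{B}_-$ and $\textbf{B}_+$ behave well under subtracting the pullback of a small ample divisor, and that "does not dominate $Y$" versus "does not surject onto $Y$" are correctly tracked through the two cases. Once this is nailed down, the remainder is a formal application of Corollary \ref{FG0} and Lemma \ref{eg1}(2).
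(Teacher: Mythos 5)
Your case (1) and the final bigness clause are essentially sound (and close in spirit to the paper, which perturbs by $\varepsilon f^*D_0$ directly and uses that $\textbf{B}(L-\varepsilon A)$ does not dominate $Y$ for small $\varepsilon$). The genuine gap is in case (2), and it sits exactly at the step you flag as "bookkeeping": the claimed containment $\textbf{B}_-(L-\delta f^*A_Y)\subset \textbf{B}_-(L)\cup f^{-1}(\textbf{B}_+(A_Y))=\textbf{B}_-(L)$ is false. Restricted base loci are not monotone under \emph{subtracting} a (pullback of an) ample divisor; controlling $\textbf{B}_-$ after such a subtraction is precisely what the augmented base locus $\textbf{B}_+$ is for, and in case (2) you are only given information about $\textbf{B}_-(L)$. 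Concretely, let $Y$ be a curve, $A_Y$ ample of degree $a$, $X=\bP_Y(\cO\oplus\cO(-A_Y))$ with distinguished section $C_0$ (so $C_0^2=-a$), and $L=C_0+f^*A_Y$. Then $L$ is nef and big, so $\textbf{B}_-(L)=\emptyset$, but $(L-\delta f^*A_Y)\cdot C_0=-\delta a<0$ for every $\delta>0$, so $C_0\subset\textbf{B}_-(L-\delta f^*A_Y)$ and this locus surjects onto $Y$. Hence the hypothesis of Corollary \ref{FG0} is not verified by $H=\delta A_Y$ in case (2), and your reduction breaks down there. A telltale sign is that your argument for case (2) never uses the klt hypothesis on $(F,\Delta^+_F)$, which the statement requires.

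The paper's proof of case (2) gets around this with a different perturbation: set $L_n:=L-\tfrac{1}{n}f^*D_0$ (big for $n\gg0$) and write $L_n-G$ ample for some effective $\bQ$-divisor $G$ on $X$ (not a pullback from $Y$). Using the scaling identity $pL-\tfrac{1}{n}f^*D_0+\delta A-G=(p-1)L+\delta A+(L_n-G)$ and the hypothesis that $\textbf{B}(L+\delta' A)$ does not surject onto $Y$ for all $\delta'>0$, one sees that $\textbf{B}_-\bigl(L-\tfrac{1}{pn}f^*D_0-\tfrac{1}{p}G\bigr)$ does not surject onto $Y$. The effective divisor $\tfrac{1}{p}G$ is then absorbed into the boundary, $\Delta':=\Delta+\tfrac{1}{p}G$, and this is exactly where the klt assumption on $(F,\Delta^+_F)$ is needed (for $p\gg0$ the perturbed pair stays klt on the general fibre), after which Theorem \ref{pe} applies with $\Delta'$ and $D+\tfrac{1}{np}D_0$. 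If you want to keep your structure, you should replace the $f^*A_Y$-perturbation in case (2) by this boundary-absorption argument; as written, case (2) is not proved.
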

\begin{proof}
For case (1), let $A$ be an ample divisor on $X$ such that $A-f^*D_0$ is ample and effective. Since $\textbf{B}_+(L)$ does not surject onto $Y$, there exists $0<\varepsilon\ll 1$ such that $\textbf{B}(L-\varepsilon A)$ does  not surject onto $Y$. We have
\begin{align*}
    \textbf{B}(L-\varepsilon A)
    \supseteq \textbf{B}(L-\varepsilon A+\varepsilon(A-f^*D_0))=\textbf{B}(L-\varepsilon f^*D_0)
    \supseteq \textbf{B}_-(L-\varepsilon f^*D_0).
\end{align*}
Therefore,  $\textbf{B}_-(L-\varepsilon f^*D_0)$ does not surject onto $Y$.  By Theorem \ref{pe}, we conclude that the sheaf $\cO_X(l(f^*(-K_Y-D-\varepsilon D_0)+\Delta^-+B))$ is weakly positive for sufficiently divisible integer $l$ and some effective $f$-exceptional $\bQ$-divisor $B$.

For (2), it suffices to show that $\textbf{B}_-(-(K_X+\Delta')-f^*D-\varepsilon f^*D_0)$ does not surject over $Y$ for some $\bQ$-divisor $\Delta'$ where $(F,\Delta'^+_F)$ is log canonical. 

We consider $L_{n}:=L-\frac{1}{n} f^*D_0.$ Note that $L$ is big and hence so is  $L_n$  for $n \gg 0$. 
Let $G$ be an effective $\bQ$-divisor such that $L_n-G$ is ample. Since we assume that $\textbf{B}_-(L)$ does not surject onto $Y$, it follows that $\textbf{B}(L+\delta A)$ does not surject onto $Y$ for any $\delta>0$ and ample divisor $A$. 
Hence for any $p\in \bN$, 
$$\textbf{B}(pL-\frac{1}{n} f^*D_0+\delta A -G)  = \textbf{B}((p-1)L+\delta A+L_n -G) \subset \textbf{B}((p-1)L+\delta A)$$ does not surject onto $Y$.  
As a result, neither does $\textbf{B}_-(pL-\frac{1}{n} f^*D_0-G)=\textbf{B}_-(L_{pn}-\frac{1}{p}G)$. 

Now since $(F,\Delta^+_F)$ is klt,   by letting $\Delta':=\Delta+\frac{1}{p}G$, we have $(F,\Delta'^+_F)$ is still klt for $p\gg0$. 
Thus,
\begin{align*}
    \textbf{B}_-(L_{pn}-\frac{1}{p}G)=\textbf{B}_-(-(K_X+\Delta')-f^*D-\frac{1}{np} f^*D_0)
\end{align*} does not surject onto $Y$. By Theorem \ref{pe}, there is an effective $f$-exceptional $\bQ$-divisor $B$ on $X$ such that $\cO_X(l(f^*(-(K_Y+D)-\frac{1}{np}D_0)+\Delta^-+B))$ is weakly positive for sufficiently divisible integer $l$.
\end{proof}

Observing Theorem \ref{pe} and Theorem \ref{big}(1), it is reasonable to expect  analogous statements under the assumption that $-K_X$ is $\bQ$-effective.

\begin{theorem} \label{eff}
Let $f:X\rightarrow Y$ be an algebraic fibre space between normal projective varieties such that $Y$ is $\bQ$-Gorenstein. Let $\Delta=\Delta^+-\Delta^-$ be a $\bQ$-divisor on $X$ such that $(K_X+\Delta)$ is $\bQ$-Cartier, $f(\Supp\Delta^-)\neq Y$ and $(F,\Delta^+_F)$ is klt for general fibres $F$ of $f$. Let $D$ be a $\bQ$-Cartier divisor on $Y$ such that $L:=-(K_X+\Delta)-f^*D$ is $\bQ$-effective with the stable base locus $\textbf{B}(L)$ does not dominate $Y$, then we have that $f^*(-K_Y-D)+\Delta^-+E_X$ is $\bQ$-effective for some effective $f$-exceptional $\bQ$-divisor $E_X$. Moreover, we can take $E_X=0$ if one of the following assumptions holds:
\begin{enumerate}
\item $f$ is equidimensional;
\item $\Delta$ is effective;
\item $Y$ has at worst canonical singularities.
\end{enumerate}
\end{theorem}
\begin{proof}
The idea is similar to the proof of \cite[Theorem 4.1]{A2}. At first, we prove that $f:(X,B)\rightarrow Y$ is a klt-trivial fibration in the sense of \cite{FG14}, where $B:=\Delta+L$. Since $L$ is $\bQ$-effective with stable base locus $\textbf{B}(L)$ which does not dominate $Y$, then up to $\bQ$-linear equivalence, we may assume that $\Delta_F+L|_F$ is effective and klt for general fibres $F$. Let $B:=\Delta+L$, then by the above discussion, we have ${\rm Nklt}(\Delta^++L)$ does not surject onto $Y$, so $(X, B)$ is subklt over the generic point of $Y$. 

Let $\pi:X'\rightarrow X$ be any birational morphism such that $X'$ is smooth. By the formula $ K_{X'}=\pi^*(K_X+B)+ {\bf A}_{X'},$ we have ${\bf A}_{X'}= -B'+E$, where $B'$ is the proper transform of $B$ on $X'$ and $E$ is a $\pi$-exceptional divisor. Since $(X,B)$ is subklt over the generic point of $Y$ and $B^h$ is effective by our assumption, we have $\lceil -B \rceil$ (resp. $\lceil -B' \rceil$) has no horizontal component for $f$ (resp. $f\circ\pi$), and the horizontal component of $\lceil E\rceil$ for $f \circ \pi$ is effective. This implies that $f_*\pi_*\cO_{X'}({\bf A}_{X'})$ is of rank at least 1. Moreover, $\lceil {\bf A}_{X'} \rceil = \lceil -B'+E \rceil = \lceil -B'\rceil+\lceil E \rceil$ is the sum of the vertical divisor $\lceil -B'\rceil$ and the $\pi$-exceptional divisor $\lceil E\rceil$. Now we let $A$ be a Cartier divisor on $Y$ such that $\pi^*f^*A\geq \lceil -B' \rceil$ (such $A$ exists since $\lceil -B'\rceil$ is vertical for $f\circ\pi$), then by the projection formula and \cite[Lemma 7.11 and 7.12]{Deb}, we have 
\begin{align*}
f_*\pi_*\cO_{X'}({\bf A}_{X'})&\leq f_*\pi_*\cO_{X'}(\pi^*f^*A+\lceil E^+\rceil)\\
&= f_*(\cO_X(f^*A)\otimes\pi_*\cO_{X'}(\lceil E^+ \rceil))\\
&\cong f_*(\cO_X(f^*A)\otimes \cO_X)=\cO_Y(A).
\end{align*}
In particular, $f_*\pi_*\cO_{X'}({\bf A}_{X'})$ is of rank 1 for any birational morphism $\pi:X'\rightarrow X$ with $X'$ smooth, hence $f_*\cO_X(\lceil {\bf A}(X,B)\rceil)$ is of rank 1. Moreover, we have $$K_X+B=K_X+\Delta+L\sim_{\bQ} f^*(-D).$$ Therefore, $f:(X,B)\rightarrow Y$ is a klt-trivial fibration in the sense of \cite{FG14}, which is also equivalent to the lc-trivial fibration in the sense of \cite{A2}, so we can apply the construction of moduli divisor on $f':X'\rightarrow Y'$ to get
\begin{align*}
    f^*(-D)\sim_{\bQ}K_{X}+B\sim_{\bQ} f^*(K_{Y}+M_{Y}+B_{Y}),
\end{align*}
and hence $(-K_Y-D)\sim_{\bQ} M_{Y}+B_{Y}$. 

Thus, to show the $\bQ$-effectiveness of $f^*(-K_Y-D)+\Delta^-+E$ for some $f$-exceptional divisor $E$, it remains to show that $f^*(M_{Y}+B_{Y})+\Delta^-+E$ is $\bQ$-effective. Similar to Theorem \ref{pe}, we first prove the theorem under the assumption that $f$ is equi-dimensional. Since $B^h$ is effective, we can apply \cite[Theorem 3.3]{A2} to conclude that the moduli-\textbf{b}-divisor \textbf{M} is nef and abundant. There is a resolution $\mu:Y'\rightarrow Y$ such that $M_{Y'}$ is nef  $\bQ$-effective.  Note that in general $M_Y$ may not be $\bQ$-Cartier, but it is $\bQ$-Cartier over $Y_0$, where $Y_0$ is the smooth locus of $Y$. Since $\mu_*(M_{Y'})=M_Y$, over the open dense subset $\mu^{-1}(Y_0)$ of $Y'$ we have the equality $\mu^*M_Y= M_{Y'}+E^+-E^-$ for $E^+, E^-$ be some effective $\mu$-exceptional $\bQ$-divisors on $Y'$. Hence $$\mu^*M_Y+E^- \geq M_{Y'}$$ is $\bQ$-effective over $\mu^{-1}(Y_0)$. Thus, $\mu^*M_Y$ is $\bQ$-linear equivalent to a divisor which is effective outside the exceptional set over $\mu^{-1}(Y_0)$. It follows that $M_{Y}$ is $\bQ$-effective over $Y_0$. Since $Y$ is normal, we have $Y-Y_0$ has codimension at least 2, which implies $M_Y$ is $\bQ$-effective over $Y$. Next, let $B^v$ be the vertical part of $B$. By the definition of $B$, we have $B+\Delta^-=\Delta^++L$ is effective. Hence $B^v+\Delta^-=(\Delta^++L)^v$ is effective since $\Delta^-$ is vertical. Also, by the definition of $B_Y$, the negative part of $f^*B_Y-B^v$ is $f$-exceptional. Since $f$ is equidimensional, there is no $f$-exceptional divisor on $X$ and hence $f^*B_Y\geq B^v$. Thus, we have $f^*(M_Y+B_Y)\geq B^v$ since $M_Y$ is an effective $\bQ$-divisor, which implies $f^*(M_Y+B_Y)+\Delta^-\geq B^v+\Delta^-\geq 0$.

The proof for the general case again follows verbatim to the argument of \cite[Theorem 3.1]{EG}. By the flattening theorem, there is a normal birational modification $\mu:Y'\rightarrow Y$ such that let $X'$ be the normalization of the main component of $X\times_{Y} Y'$, then $\pi:X'\rightarrow X$ is proper birational and the induced morphism $f':X'\rightarrow Y'$ is equidimensional.
\begin{center}
    \begin{tikzcd}
&X'\arrow[r,"\pi"]\arrow[d,"f'"] &X\arrow[d,"f"]\\
&Y'\arrow[r,"\mu"] &Y
\end{tikzcd}
\end{center}
Now we define $\Delta'$ by $$K_{X'}+\Delta'=\pi^*(K_X+\Delta),$$ and write $\Delta'=\Delta'^+-\Delta'^-$, then we have $$-(K_{X'}+\Delta')-\pi^*f^*D=\pi^*(-(K_X+\Delta)-f^*D)$$ is $\bQ$-effective with the stable base locus not dominant over $Y'$. Therefore, $f^*(-K_{Y'}-\mu^*D)+\Delta'^-$ is $\bQ$-effective since $f'$ is equidimensional. Now write $K_{Y'}=\mu^*K_Y+E$, and $E=E^+-E^-$, then we have $$\mu^*(-K_Y)+E^-=-K_{Y'}+E^+\geq -K_{Y'}.$$ Thus $f'^*(\mu^*(-K_Y)+E^--\mu^*D)+\Delta'^-$ is also $\bQ$-effective, and so does $$\pi_*(f'^*(\mu^*(-K_Y)+E^--\mu^*D)+\Delta'^-)=f^*(-K_Y-D)+\Delta^-+\pi_*(f'^*E^-).$$ Now, define $E_X:=\pi_*(f'^*E^-)$, then since $f'$ is equidimensional, we have $f_*(E_X)=\mu_*E^-=0$, hence $E_X$ is $f$-exceptional. Also, if $Y$ has at worst canonical singularities, then $E^-=0$, hence $E_X=0$. Finally, if $\Delta$ is effective, then $\Delta^-=0$ and hence the $\bQ$-effectiveness of $f^*(-K_Y-D)+E_X$ implies $f^*(-K_Y-D)$ is $\bQ$-effective by Lemma \ref{effexc} (no matter $Y$ has at worst canonical singularities or not).
\end{proof}

By the above theorems in this section, we can give more properties of the relative anti-canonical divisor, generalizing \cite[Theorem 3.12]{Deb} and \cite[Corollary 3.7]{EIM}.

\begin{corollary}\label{rela}
Let $f:X\rightarrow Y$ be an algebraic fibre space between normal projective varieties such that $Y$ is $\bQ$-Gorenstein and is not a point. Let $(X,\Delta)$ be a log pair such that $(F,\Delta_F)$ is log canonical for general fibres $F$ of $f$, then:
\begin{enumerate}
    \item Let $D_0$ be a pseudo-effective $\bQ$-Cartier divisor on $Y$ that is not numerically trivial. Then $\textbf{B}_-(-(K_{X/Y}+\Delta+f^*D_0))$ and $\textbf{B}(-(K_{X/Y}+\Delta+f^*D_0))$ surjects onto $Y$.
    \item $\textbf{B}_+(-(K_{X/Y}+\Delta))$ always surjects onto $Y$. Moreover, if $-(K_{X/Y}+\Delta)$ is big, and $(F,\Delta|_F)$ is klt for general fibres $F$. Then $\textbf{B}_-(-(K_{X/Y}+\Delta))$ and  $\textbf{B}(-(K_{X/Y}+\Delta))$ surjects onto $Y$.
\end{enumerate}
\end{corollary}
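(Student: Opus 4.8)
\textbf{Proof proposal for Corollary \ref{rela}.}

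The plan is to reduce both parts to the pseudo-effectiveness/effectiveness obstructions supplied by Theorems \ref{pe} and \ref{eff}, applied in a proof-by-contradiction fashion, together with the fact that $Y$ is positive-dimensional. For part (1), suppose to the contrary that $\textbf{B}_-(-(K_{X/Y}+\Delta+f^*D_0))$ does not surject onto $Y$. Writing $K_{X/Y}=K_X-f^*K_Y$ we have $-(K_{X/Y}+\Delta+f^*D_0) = -(K_X+\Delta)-f^*(D_0-K_Y)$, so taking $D:=D_0-K_Y$ in Theorem \ref{pe} (with the hypothesis $f(\Supp\Delta^-)\neq Y$ and $(F,\Delta^+|_F)$ lc coming from the log pair assumption; note $\Delta$ being a genuine effective divisor makes $\Delta^-=0$, but I will keep the notation general) we conclude that $-K_Y-D = -D_0$ is pseudo-effective. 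Then $D_0$ is both pseudo-effective and anti-pseudo-effective, hence numerically trivial, contradicting the hypothesis. For the stable base locus statement one argues identically but invokes Theorem \ref{eff} instead of Theorem \ref{pe}: if $\textbf{B}(-(K_{X/Y}+\Delta+f^*D_0))$ does not dominate $Y$, then $f^*(-K_Y-D)+\Delta^- = f^*(-D_0)+\Delta^-$ is $\bQ$-effective; pushing forward by $f$ and using $f_*\cO_X=\cO_Y$ gives that $-D_0$ is $\bQ$-effective, so $D_0$ is both pseudo-effective and $\bQ$-anti-effective, again forcing numerical triviality — contradiction. (Here one should check the harmless point that $f_*$ of a $\bQ$-effective class whose vertical part is $\Delta^-$, with $f(\Supp\Delta^-)\neq Y$, still yields a $\bQ$-effective class on $Y$, since the horizontal-to-$Y$ push-forward kills nothing essential.)

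For part (2), the augmented base locus statement is the ``boundary'' case $D_0$ numerically trivial, which is excluded from part (1), so it needs a separate and in fact easier argument: $\textbf{B}_+$ is the relevant locus for bigness, and if $\textbf{B}_+(-(K_{X/Y}+\Delta))$ did not dominate $Y$ then Theorem \ref{big}(1) (applied with $D=-K_Y$, so that $L=-(K_{X/Y}+\Delta)$, noting that $-(K_{X/Y}+\Delta)$ need not be big — but one can first perturb) would tend to give $-K_Y-D=0$ big, which is absurd. More cleanly: if $\textbf{B}_+(-(K_{X/Y}+\Delta))$ misses the generic point of $Y$, then for a general fibre $F$, $-(K_{X/Y}+\Delta)|_F=-(K_F+\Delta|_F)$ is big on $F$; but that does not immediately contradict anything, so instead I would argue numerically: $\textbf{B}_+(M)$ for a divisor $M$ that is merely pseudo-effective and not big is all of $X$, while if $M$ is big then $\textbf{B}_+(M)\supseteq\textbf{B}_+$ of its restriction structure — the cleanest route is to note that for \emph{any} divisor $M$ on $X$ with $f$ not generically finite, $\textbf{B}_+(M)$ must dominate $Y$ unless $M|_F$ is big \emph{and} the positivity extends, and then derive a contradiction with $-K_Y$ not being big via Theorem \ref{big}(1) applied to a small perturbation $-(K_{X/Y}+\Delta)-\varepsilon f^*H$ for $H$ ample. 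Finally, the ``moreover'' in part (2) (the $\textbf{B}_-$ and $\textbf{B}$ statements when $-(K_{X/Y}+\Delta)$ is big and $(F,\Delta|_F)$ is klt) is handled exactly as in part (1) with $D_0$ replaced by a small ample perturbation: if $\textbf{B}_-(-(K_{X/Y}+\Delta))$ did not surject, Theorem \ref{big}(2) gives $-K_Y$ big, hence $-(K_Y+\varepsilon H)$ pseudo-effective for small $\varepsilon$, and iterating the perturbation argument (as in the proof of Theorem \ref{big}) one contradicts the fact that $-K_Y$ cannot be big while also $Y$ being, say, of general type or just having $-K_Y$ bounded — actually the cleanest contradiction is: Theorem \ref{big} would make $-K_Y$ big for \emph{every} choice, forcing $\textbf{B}_+(-K_Y)\neq Y$, but then running the argument on $Y$ in place of $X$ leads nowhere, so the genuine contradiction must come from $D_0$-type input, i.e. one shows $\textbf{B}(-(K_{X/Y}+\Delta))$ surjects by applying Theorem \ref{eff} to $-(K_{X/Y}+\Delta)-f^*(\text{ample})$.

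I expect the main obstacle to be part (2)'s unconditional claim that $\textbf{B}_+(-(K_{X/Y}+\Delta))$ \emph{always} surjects onto $Y$, independent of any bigness hypothesis. The issue is that $-(K_{X/Y}+\Delta)$ may fail to be pseudo-effective, in which case $\textbf{B}_+ = X$ trivially surjects — so the content is really when it \emph{is} pseudo-effective but not big, where $\textbf{B}_+=X$ again, or when it is big, which is the only substantive case and is exactly the ``moreover'' clause. So I suspect the intended reading is: the first sentence of (2) is the cheap observation ($\textbf{B}_+(M)=X$ whenever $M$ is not big, and when $M=-(K_{X/Y}+\Delta)$ is big one still has $\textbf{B}_+$ dominating $Y$ because otherwise Theorem \ref{big}(1) with a perturbation $D=-K_Y$, $D_0$ ample, outputs a big $-K_Y-\varepsilon D_0$ for all small $\varepsilon$, hence $-K_Y$ big, and then the same machine applied with $-K_Y$ big and $Y\to\text{pt}$... ) — the real work is bookkeeping which of Theorems \ref{pe}, \ref{big}, \ref{eff} to invoke and verifying in each case that the vertical-divisor hypothesis $f(\Supp\Delta^-)=f(\emptyset)\neq Y$ and the relevant fibre-positivity hypothesis hold after the perturbation. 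I would organize the write-up as: (1) two parallel arguments (Thm \ref{pe} for $\textbf{B}_-$, Thm \ref{eff} for $\textbf{B}$); (2) the $\textbf{B}_+$ sentence via ``not big $\Rightarrow \textbf{B}_+=X$, big $\Rightarrow$ contradiction with Theorem \ref{big}(1)'', and the ``moreover'' via Theorem \ref{big}(2) plus Theorem \ref{eff}, exactly mirroring (1) with an ample perturbation in place of $D_0$.
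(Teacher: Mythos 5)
Your core mechanism for part (1) --- the contrapositive of Theorem \ref{pe} with $D=D_0-K_Y$, giving $-D_0$ pseudo-effective and hence $D_0\equiv 0$, a contradiction --- is exactly the paper's argument and is correct (note only that if $\textbf{B}_-(L)$ fails to surject onto $Y$ then in particular $\textbf{B}_-(L)\neq X$, so $L$ is automatically pseudo-effective and the hypotheses of Theorem \ref{pe} are available). However, your handling of the stable base locus in (1) has a genuine gap: you invoke Theorem \ref{eff}, which requires $(F,\Delta^+_F)$ to be \emph{klt}, whereas part (1) of the corollary only assumes $(F,\Delta_F)$ log canonical, so that invocation is not licensed. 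The fix is much cheaper and is what the paper uses implicitly: $\textbf{B}_-(M)\subset\textbf{B}(M)\subset\textbf{B}_+(M)$, so once $\textbf{B}_-$ surjects onto $Y$, so does $\textbf{B}$; no second theorem is needed.

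The ``moreover'' clause of (2) is not actually proved in your proposal. The intended argument is one line: take $D=-K_Y$, so $L=-(K_{X/Y}+\Delta)$; if $\textbf{B}_-(L)$ did not surject onto $Y$, Theorem \ref{big}(2) (with $\Delta\geq 0$, fibres klt, $L$ big) would make $-K_Y-D=0$ big, absurd since $Y$ is not a point; then $\textbf{B}\supset\textbf{B}_-$ finishes the $\textbf{B}$ statement. Instead you apply Theorem \ref{big}(2) in a way that outputs ``$-K_Y$ big'', which corresponds to taking $D=0$ and hence to $L=-(K_X+\Delta)$ rather than the relative divisor in the statement, and your subsequent perturbation/iteration discussion does not converge to a contradiction --- you say so yourself (``leads nowhere'') and fall back on applying Theorem \ref{eff} to $-(K_{X/Y}+\Delta)-f^*(\mathrm{ample})$; but there is no containment between $\textbf{B}(L-\varepsilon f^*H)$ and $\textbf{B}(L)$ that transfers the non-domination hypothesis, and Theorem \ref{eff} with $D=-K_Y$ only yields that $0$ is $\bQ$-effective, which is no contradiction at all. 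Similarly, in the first sentence of (2) your hedge that $L$ ``need not be big --- but one can first perturb'' is unnecessary: if $\textbf{B}_+(L)$ does not dominate $Y$ then $\textbf{B}_+(L)\neq X$, so $L$ is big and Theorem \ref{big}(1) applies directly, again giving the absurdity that $0$ is big. In summary, your skeleton coincides with the paper's proof, but as written the $\textbf{B}$-claim in (1) rests on a hypothesis you do not have, and the ``moreover'' part of (2) is left unestablished.
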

\begin{proof}
For (1), let $D= -K_Y+D_0$, then since $-K_Y-D=-D_0$ is not pseudo-effective, $-(K_X+\Delta)-f^*D=-(K_{X/Y}+\Delta+f^*D_0)$ must {\em not} satisfies the condition of Theorem \ref{pe}. Hence $\textbf{B}_-(-(K_{X/Y}+\Delta+f^*D_0))$ should surject onto $Y$. For (2), let $D= -K_Y$ in Theorem \ref{big}, then $-K_Y-D=0$ is not big, so we can get the result by the same method.
\end{proof}

\section{The anti-canonical Iitaka dimension.}

Based on Theorem \ref{eff}, we have the following result, which can be thought of as the anti-canonical version of the Iitaka conjecture.

\begin{theorem}
\label{antiK}
Let $f:(X, \Delta) \rightarrow Y$ be an algebraic fibre space between normal projective varieties such that $\Delta$ is effective, $K_X+\Delta$ is $\bQ$-Cartier, and $Y$ is $\bQ$-Gorenstein. Suppose there is a $\bQ$-Cartier divisor $D$ on $Y$ such that $L:=-(K_X+\Delta)-f^*D$ is $\bQ$-effective and $\textbf{B}(L)$ does not dominate  $Y$. Suppose furthermore that $(F,\Delta_F)$ is klt for general fibres $F$ of $f$, where $\Delta_F$ is defined by $-(K_F+\Delta_F)=-(K_X+\Delta)|_F=L_F$.  Then we have
\begin{align*}
    \kappa(X,L)\leq \kappa(F, L_F)+\kappa(Y,-K_Y-D).
\end{align*}

In particular, if $X$ has at worst klt $\bQ$-Gorenstein singularities, and $-K_X$ is effective with stable base locus $\textbf{B}(-K_X)$ does not dominate $Y$, then we have
\begin{align*}
    \kappa(X,-K_X)\leq \kappa(F,-K_F)+\kappa(Y,-K_Y).
\end{align*}
\end{theorem}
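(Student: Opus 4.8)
The plan is to reduce the general inequality to the case $\kappa(Y,-K_Y-D)=0$ via the Iitaka fibration, and to handle that base case directly using Theorem \ref{eff} together with the weak positivity of pushforwards of relative pluri-log-canonical sheaves. Throughout write $N:=-K_Y-D$, so that by Theorem \ref{eff} we know $f^*N+\Delta^-$ is $\bQ$-effective (here $\Delta^-=0$ since $\Delta\ge 0$, so in fact $f^*N$ is $\bQ$-effective, hence $N$ is $\bQ$-effective). I would first dispose of the trivial cases: if $\kappa(F,L_F)=-\infty$ then $L$ is not $\bQ$-effective on the generic fibre, contradicting that $L$ is $\bQ$-effective with $\textbf{B}(L)$ not dominating $Y$ (so $L_F\sim_\bQ L|_F$ is $\bQ$-effective for general $F$); and if $\kappa(Y,N)=-\infty$ the right side is $-\infty$, but then $N$ is not $\bQ$-effective, again contradicting Theorem \ref{eff}. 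So we may assume both terms on the right are $\ge 0$.

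Next I would treat the base case $\kappa(Y,N)=0$. The strategy, following the methodology of \cite{EG}, is: after passing to a log resolution and using that $\textbf{B}(L)$ does not dominate $Y$, replace $L$ by a $\bQ$-linearly equivalent effective divisor $L'$ with $L'|_F$ effective and $(F,\Delta_F+L'|_F)$ klt for general $F$; set $B:=\Delta+L'$ so that $K_X+B\sim_\bQ f^*(-D)$ and $f:(X,B)\to Y$ is a klt-trivial fibration exactly as in the proof of Theorem \ref{eff}. For a sufficiently divisible $m>0$, the sheaf $f_*\cO_X(m(K_{X/Y}+B))=f_*\cO_X(-mf^*D - mK_Y)\cong \cO_Y(-m(K_Y+D))=\cO_Y(mN)$ — more precisely one works with $f_*\cO_X(m(K_X+B^+))\otimes\cO_Y(-mK_Y)$ and the weak positivity theorem \cite[Theorem 4.13]{C} (as invoked in the proof of Theorem \ref{pe}) — and the key point is that $h^0(X, mL')$ injects (up to the vertical contributions and the exceptional corrections, which do not affect $h^0$ on a big open set) into $h^0(Y, mN)\otimes H^0(F,\text{(pluri-log-anticanonical on }F))$ via the natural multiplication/restriction maps. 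Since $\kappa(Y,N)=0$ forces $h^0(Y,mN)$ bounded, one concludes $h^0(X,mL)$ grows at most like $h^0(F,mL_F)$, i.e. $\kappa(X,L)\le\kappa(F,L_F)=\kappa(F,L_F)+\kappa(Y,N)$. I expect this to be the technically delicate step: one must be careful that the isomorphism $f_*\cO_X(m(K_{X/Y}+B))\cong\cO_Y(mN)$ only holds generically / up to codimension-two adjustments, that $\Delta_F+L'|_F$ can genuinely be arranged klt (using that $(F,\Delta_F)$ is klt and $L'|_F$ can be chosen general), and that the comparison of sections is done with the correct rounding of the vertical and $\pi$-exceptional divisors — essentially the same bookkeeping as in Theorem \ref{pe} and Theorem \ref{eff}.

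For the general case, take an Iitaka fibration $g:Y\dashrightarrow Z$ of $N=-K_Y-D$; after a birational modification $\mu:Y'\to Y$ we may assume $g':Y'\to Z$ is a morphism with $\dim Z=\kappa(Y,N)$ and $N':=\mu^*N$ satisfies $\kappa(Y'_z, N'|_{Y'_z})=0$ for the general fibre $Y'_z$ of $g'$. Base-change $f$ along $\mu$ to get $f':X'\to Y'$ (normalizing the main component, $\pi:X'\to X$ birational), and pull back $\Delta$, $D$, $L$; the hypotheses of the theorem — $L'$ $\bQ$-effective, $\textbf{B}$ not dominating, $(F,\Delta_F)$ klt — are preserved (the $\Delta^-$ term appearing from $K_{X'}=\pi^*K_X+\dots$ is $\pi$-exceptional and can be absorbed exactly as in the proof of Theorem \ref{pe}, or one simply notes $\kappa$ is a birational invariant of these divisors). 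Now consider the composite fibre space $h:=g'\circ f':X'\to Z$. Its general fibre is $X'_z$, which maps to $Y'_z$, and by the already-established base case applied to $f'|_{X'_z}:X'_z\to Y'_z$ one gets $\kappa(X'_z, L'|_{X'_z})\le \kappa(F,L_F)+\kappa(Y'_z,N'|_{Y'_z})=\kappa(F,L_F)$. Then applying an easy-addition / subadditivity statement for $\kappa$ along $h$ (of the form $\kappa(X',L')\le\kappa(X'_z,L'|_{X'_z})+\dim Z$, which is the elementary Iitaka-type upper bound, valid without any positivity hypothesis) yields
\begin{align*}
\kappa(X,L)=\kappa(X',L')\le \kappa(X'_z,L'|_{X'_z})+\dim Z\le \kappa(F,L_F)+\kappa(Y,N).
\end{align*}
Finally, the stated special case follows by putting $\Delta=0$ and $D=0$: then $L=-K_X$, $L_F=-K_F$ (adjunction $K_F=(K_X)|_F$, valid for general $F$ since $f$ is a fibre space over a normal variety and klt is preserved on general fibres), $N=-K_Y$, and $(F,-K_F)$ klt is the hypothesis that $X$ has klt singularities restricted to the general fibre — giving $\kappa(X,-K_X)\le\kappa(F,-K_F)+\kappa(Y,-K_Y)$. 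The main obstacle, as noted, is the base case $\kappa(Y,N)=0$: making the section-comparison rigorous, i.e. establishing that $f_*\cO_X(mL)$ (suitably corrected) is a subsheaf of something like $\cO_Y(mN)\otimes(\text{weakly positive})$ whose global sections are controlled by $h^0(Y,mN)$ and $h^0(F,mL_F)$; the rest is standard Iitaka-fibration bookkeeping plus the birational-invariance arguments already rehearsed in the proofs of Theorems \ref{pe} and \ref{eff}.
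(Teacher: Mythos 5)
Your overall architecture (reduce to a base case via the Iitaka fibration of $-K_Y-D$, then apply easy addition $\kappa(X',\pi^*L)\le\kappa(W',(\pi^*L)|_{W'})+\dim Z'$) is the same as the paper's, but the base case itself -- the only genuinely hard step -- is not proved, and the mechanism you sketch for it would not work. You assert that $h^0(X,mL')$ injects into $h^0(Y,mN)\otimes H^0(F,\cdot)$ ``via the natural multiplication/restriction maps,'' with vertical and exceptional contributions ``not affecting $h^0$ on a big open set.'' That parenthetical is exactly where the difficulty lives: the kernel of the restriction map $\bigoplus_m H^0(X,\lfloor mL\rfloor)\to\bigoplus_m H^0(F,\lfloor mL_F\rfloor)$ consists of sections whose divisors contain the fibre $F$, i.e.\ purely vertical phenomena, and weak positivity of $f_*\cO_X(m(K_{X/Y}+B))$ gives no control over such sections. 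The paper's base case is Theorem \ref{kd0}: under the hypothesis that $\kappa(X,f^*(-K_Y-D)+P)=0$ for a suitable vertical $P\ge\Delta^-$, no effective $N\sim_\bQ L$ can contain a general fibre. Its proof is not a multiplication-map argument at all: one blows up $X$ along $F$ and $Y$ at $f(F)$, applies Proposition \ref{egeff} (which rests on Theorem \ref{eff} applied after perturbing by the hypothetical divisor through $F$) to produce an effective divisor $\bQ$-linearly equivalent to $f^*(-K_Y-D)+P$ whose support contains the exceptional divisor, and contradicts the uniqueness of the effective member forced by $\kappa=0$. Nothing in your sketch substitutes for this step.

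There is a second, related gap in your reduction. When you apply the ``base case'' to the restricted fibration $X'_z\to Y'_z$ over a general Iitaka fibre, the boundary on $X'$ is $B=\pi^*(K_{X/Y}+\Delta)-K_{X'/Y'}$, which is in general \emph{not} effective: its negative part is supported over $\Exc(\mu)$ (here one uses that $\Delta\ge0$ forces the $\pi$-discrepancy divisors with positive coefficient to be $\pi$-exceptional, hence to map into $\Exc(\mu)$). Your base case, as stated, assumes an effective boundary and $\kappa(Y,N)=0$, so it does not apply; saying the correction terms ``can be absorbed as in Theorem \ref{pe}'' or invoking birational invariance of $\kappa$ does not produce the needed hypotheses. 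The paper resolves this precisely through the flexibility of Theorem \ref{kd0}: since $\mu(\Exc(\mu))\subset\mathbf{B}(-K_Y-D)$, one can take $P_{mn}=f'^*\mu^*(mn(-K_Y-D))\ge B^-$, vertical, with $\kappa(W',(f'^*\mu^*(-K_Y-D)+P_{mn})|_{W'})=\kappa(G',\mu^*(-K_Y-D)|_{G'})=0$, which is exactly the hypothesis of Theorem \ref{kd0} on $W'\to G'$. Without formulating and proving an injectivity statement of this kind (allowing a vertical negative part compensated by such a $P$), your argument does not close either in the base case or after base change to the Iitaka fibration.
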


To prove Theorem \ref{antiK},  we need the following proposition, which is a variant of \cite[Proposition 4.4]{EG}.

\begin{proposition}\label{egeff}
Let $f:X\rightarrow Y$ be an algebraic fibre space between normal projective varieties such that $Y$ is $\bQ$-Gorenstein. Let $\Delta=\Delta^+-\Delta^-$ be a $\bQ$-divisor on $X$ such that $(K_X+\Delta)$ is $\bQ$-Cartier, $f(\Supp\Delta^-)\neq Y$ and $(F,\Delta^+_F)$ is klt for general fibres $F$ of $f$. Let $D$ and $E$ be $\bQ$-Cartier divisors on $Y$ such that $L:=-(K_X+\Delta)-f^*D$ is $\bQ$-effective with the stable base locus $\textbf{B}(L)$ does not dominate $Y$. Suppose furthermore that there is an effective $\bQ$-divisor $\Gamma$ such that $L-g^*E\sim_{\bQ} \Gamma \ge 0$. Moreover, assume one of the following three conditions holds:
\begin{enumerate}
\item $f$ is equidimensional;
\item $\Delta$ is effective;
\item $Y$ has at worst canonical singularities.
\end{enumerate}
Then for $0<\varepsilon\ll 1$, $f^*(-K_Y-D-\varepsilon E)+\Delta^-$ is $\bQ$-effective.
\end{proposition}
\begin{proof}
The proof follows the argument of \cite[Proposition 4.4]{EG} closely. We consider $$\Delta_{\varepsilon}:=\Delta+\varepsilon\Gamma;\hspace{1cm} D_{\varepsilon}:=D+\varepsilon E;\hspace{1cm} L_{\varepsilon}:=-(K_X+\Delta_{\varepsilon})-f^*D_{\varepsilon}.$$
Then 
$  L_{\varepsilon} =L-\varepsilon(\Gamma+f^*E)\sim_{\bQ} (1-\varepsilon)L$.
Thus, for $\varepsilon<1$, $L_{\varepsilon}$ is $\bQ$-effective with the stable base locus $\textbf{B}(L_{\varepsilon})$ that does not dominate $Y$. For $0<\varepsilon  \ll 1$,  $(F,(\Delta_{\varepsilon})|_F)$ is still klt for general fibre $F$. Therefore, applying Theorem \ref{eff} on $L_\epsilon$, $f^*(-K_Y-D_{\varepsilon})+\Delta_\varepsilon^-$ is $\bQ$-effective, hence so does $f^*(-K_Y-D-\varepsilon E)+\Delta^-\geq f^*(-K_Y-D_{\varepsilon})+\Delta_\varepsilon^-$.
\end{proof}

In fact, it is not difficult to generalize Proposition \ref{egeff} to the situation that $f$ is an almost holomorphic fibration, by following the same proof in \cite[Proposition 4.4]{EG}. We leave the details to the readers.

The next theorem is a variant of Ejiri and Gongyo's injectivity theorem \cite[Theorem 1.2]{EG}. This is an application of Proposition \ref{egeff}, and plays a key role in the proof of Theorem \ref{antiK}.

\begin{theorem}\label{kd0}
Use the notation and assumption in Proposition \ref{egeff}. Suppose there exists a $\bQ$-Cartier divisor $P$ on $X$ such that $P\geq \Delta^-$, $f(\Supp P)\neq Y$, and $\kappa(X,f^*(-K_Y-D)+P)=0$. Then for any general fibre $F$ of $f$, the following morphism between graded ring defined by restriction
\begin{align*}
    \bigoplus_{m\geq 0}H^0(X,\lfloor mL \rfloor)\rightarrow \bigoplus_{m\geq 0}H^0(F,\lfloor mL_F \rfloor)
\end{align*}
is injective. In particular, this implies $\kappa(X,L)\leq \kappa(F,L_F).$ 
\end{theorem}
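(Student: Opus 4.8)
The plan is to prove the injectivity statement by a contradiction argument combined with Proposition \ref{egeff}, closely mirroring the strategy of \cite[Theorem 1.2, Proposition 4.4]{EG}. Suppose some nonzero section $s\in H^0(X,\lfloor mL\rfloor)$ restricts to $0$ on a general fibre $F$. Then the effective divisor $\Gamma_0 := \{s=0\}\in |\lfloor mL\rfloor|$ contains $F$ in its support, so after subtracting (a multiple of) the fibre class we obtain that $\lfloor mL\rfloor - f^*(\text{pt})$, and more precisely $mL - f^*H$ for a suitable effective $H$ on $Y$, is $\bQ$-effective. The first step is to make this precise: because $s|_F=0$ and $F$ is a general fibre, the scheme-theoretic zero locus of $s$ contains $f^{-1}(V)$ for a divisor $V$ on $Y$ (or at least $f^*(\text{pt})$ moves), giving an effective $\bQ$-divisor $\Gamma$ with $L - f^*E \sim_{\bQ} \Gamma \geq 0$ for some nonzero effective (hence big after perturbation, or at least $\bQ$-effective and not numerically trivial) $\bQ$-Cartier divisor $E$ on $Y$.

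The second step is to feed this into Proposition \ref{egeff}: it yields that $f^*(-K_Y - D - \varepsilon E) + \Delta^-$ is $\bQ$-effective for $0 < \varepsilon \ll 1$. Combining with the hypothesis, the third step is to derive the numerical contradiction. We have $P \geq \Delta^-$ and $f(\Supp P)\neq Y$, so $f^*(-K_Y-D) + P \geq f^*(-K_Y - D) + \Delta^-$, and by the previous step $f^*(-K_Y-D-\varepsilon E) + \Delta^- = f^*(-K_Y-D) + \Delta^- - \varepsilon f^*E$ is $\bQ$-effective. Adding back $\varepsilon f^*E$ (which is $\bQ$-effective since $E$ can be arranged effective, or at least since $\varepsilon f^*E \sim_\bQ \varepsilon(L - \Gamma) + \dots$ — one has to be a little careful here) shows $f^*(-K_Y-D) + P$ is $\bQ$-effective and in fact $f^*(-K_Y-D)+P - \varepsilon f^*E'$ is still $\bQ$-effective for some non-numerically-trivial $E'$; since $f(\Supp P)\neq Y$ and $E'$ is a genuine divisor on $Y$, this forces $\kappa(X, f^*(-K_Y-D)+P) \geq 1$, contradicting the hypothesis $\kappa = 0$. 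Hence no such $s$ exists and the restriction map is injective.

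The final step is the consequence for Iitaka dimensions: injectivity of the graded-ring homomorphism $\bigoplus_m H^0(X,\lfloor mL\rfloor)\to\bigoplus_m H^0(F,\lfloor mL_F\rfloor)$ immediately gives $\kappa(X,L)\leq\kappa(F,L_F)$, since the transcendence degree (minus one) of the image is at most that of the target, and injectivity means the source section ring embeds into the target. One should note $L_F = -(K_F + \Delta_F)$ by the defining relation, so $\lfloor mL\rfloor|_F$ and $\lfloor mL_F\rfloor$ agree for divisible $m$, justifying that the restriction indeed lands in $H^0(F,\lfloor mL_F\rfloor)$.

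The main obstacle I expect is the bookkeeping in the first and third steps: extracting from "$s|_F = 0$" a clean statement of the form "$L - f^*E \sim_\bQ \Gamma \geq 0$ with $E$ effective and not numerically trivial on $Y$" requires care, because a priori $s$ vanishing on one general fibre only tells us the divisor of $s$ contains that fibre, and one must argue (using that $f_*\cO_X = \cO_Y$ and semicontinuity, or simply that the fibre $F$ moves in the $\bQ$-Cartier class $f^*(\text{pt})$ when $\dim Y \geq 1$, which is guaranteed since $Y$ is not a point) that this produces a genuine pullback divisor to subtract. Likewise, in the third step one must ensure the perturbation direction $\varepsilon E$ does not get absorbed by the $f$-exceptional or vertical ambiguities in $P$ versus $\Delta^-$; keeping track of the vertical decomposition and using $f(\Supp P)\neq Y$ is what makes the contradiction with $\kappa = 0$ go through. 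The rest — invoking Proposition \ref{egeff} and the standard comparison of Iitaka dimensions under an injective graded-ring map — is routine.
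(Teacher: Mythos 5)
The key step of your argument does not go through as stated, and the missing idea is exactly the one the paper's proof is built around. In your first step you claim that a section $s\in H^0(X,\lfloor mL\rfloor)$ vanishing along a general fibre $F$ yields $L-f^*E\sim_{\bQ}\Gamma\geq 0$ for some nonzero effective $\bQ$-Cartier divisor $E$ on $Y$. When $\dim Y\geq 2$ the fibre $F$ has codimension $\dim Y$ in $X$, so ``$F$ moves in the class $f^*(\mathrm{pt})$'' is meaningless, and $\Supp\,\mathrm{div}(s)\supset F$ only says that some component of $\mathrm{div}(s)$ contains $F$; that component need not contain any set of the form $f^{-1}(V)$ with $V$ a divisor on $Y$ (it may even map to a codimension $\geq 2$ subset of $Y$ if $f$ is not equidimensional), and even if it dominates a divisor $V\ni y$ one would need \emph{all} components of $f^*V$ to appear in $\mathrm{div}(s)$ with positive multiplicity in order to subtract $\varepsilon f^*V$ and stay effective. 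So no pullback divisor can be extracted directly on $X\rightarrow Y$. The paper resolves this by blowing up: choose $F$ with $y=f(F)$ a smooth point over which $f$ is flat, let $\mu:Y'\rightarrow Y$ be the blow-up at $y$ and $\pi:X'\rightarrow X$ the blow-up along $F$, so that $X'=X\times_Y Y'$ and the exceptional divisor $G$ satisfies $G=f'^*E$. Then $\Supp\pi^*N\supset G$ \emph{does} give a pullback divisor to subtract, and after the discrepancy bookkeeping $K_{Y'}=\mu^*K_Y+aE$, $-\pi^*(K_X+\Delta)=-(K_{X'}+\Delta')+bG$ with $b\leq a=\dim Y-1$, one sets $L'=\pi^*L+(a-b)G$ and applies Proposition \ref{egeff} to $f':X'\rightarrow Y'$ with the divisors $\mu^*D-aE$ and $\delta E$. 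Your proposal never introduces this blow-up, and without it step one fails for all $\dim Y\geq 2$ (it is fine only when $Y$ is a curve).

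Your third step is also not the mechanism that closes the argument. Producing an effective member of $\pi^*(f^*(-K_Y-D)+P)-\varepsilon\delta G$ does not ``force $\kappa(X,f^*(-K_Y-D)+P)\geq 1$''; the hypothesis $\kappa=0$ is used differently: it guarantees a \emph{unique} effective $\bQ$-divisor $M\sim_{\bQ}f^*(-K_Y-D)+P$, and the effectivity above (using $P\geq\Delta^-$ and $F\cap\Supp P=\emptyset$) forces $G\subset\Supp\pi^*M$, hence $F\subset\Supp M$ — contradicting the fact that the fixed divisor $M$ cannot contain a general fibre, which was built into the choice of $F$. So the contradiction is with the generality of $F$ relative to $M$, not with the value of $\kappa$. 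Your reduction of injectivity to ``no effective $N\sim_{\bQ}L$ contains $F$'', the use of Proposition \ref{egeff}, and the final passage from injectivity of the graded ring map to $\kappa(X,L)\leq\kappa(F,L_F)$ are all in line with the paper, but as written the proposal has a genuine gap at the two points above.
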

\begin{proof}
We closely follow the proof of \cite[Theorem 4.2, Corollary 4.7]{EG}. Consider the map between graded rings defined by restriction
\begin{align*}
    \bigoplus_{m\geq 0}H^0(X,\lfloor mL \rfloor)\rightarrow \bigoplus_{m\geq 0}H^0(F,\lfloor mL_F \rfloor).
\end{align*}
As in the proof of \cite[Corollary 4.7]{EG}, the kernel of this map are the sections corresponding to the effective divisors $N\in|mL|$ such that $\Supp N\supset F$. So it suffices to show that for every effective $\bQ$-divisor $N\sim_{\bQ} L$, $\Supp N$ does not contains $F$.

Since $\kappa(X,f^*(-K_Y-D)+P)=0$, there is a unique effective $\bQ$-Cartier divisor $M\sim_{\bQ}f^*(-K_Y-D)+P$. Let $F$ be a normal irreducible fibre of $f$ such that $y=f(F)$ is a smooth point on $Y$, $f$ is flat over an open dense neighborhood of $y$, $F\cap \Supp P=\emptyset$, and $F\nsubseteq \Supp M$. 
Now define $\pi:X'\rightarrow X$ (resp. $\mu:Y'\rightarrow Y$) to be the blow-up of $X$ (resp. $Y$) with respect to $F$ (resp, $y$). Then as in the proof of \cite[Theorem 4.2]{EG}, since $f$ is flat over an open neighborhood of $y$, we have $X'=X\times_{Y} Y'$ by \cite[Lemma 31.32.3]{S}. Also, since $y$ is a smooth point on $Y$, we have $Y'$ is still normal and $\bQ$-Gorenstein and $X'$ is still normal since $f'$ is flat with normal fibres over a neighborhood of exceptional locus, where $f'$ is the induced morphism $f':X'\rightarrow Y'$. Now we have the following commutative diagram:

\begin{center}
    \begin{tikzcd}
&X'\arrow[r,"\pi"]\arrow[d,"f'"] &X\arrow[d,"f"]\\
&Y'\arrow[r,"\mu"] &Y.
\end{tikzcd}
\end{center}

Write $K_{Y'}=\mu^*K_Y+aE$, where $E$ is the exceptional divisor of $\mu$ on $Y'$, and $a=\dim Y-1$. Let $G$ be the exceptional divisor of $\pi$ (where $G\cong F\times E$), then by the flatness over $F$, we have $G=f'^*E$. Also, since  $F\cap \Supp\Delta^-=\emptyset$, we can write $-\pi^*(K_X+\Delta)=-(K_{X'}+\Delta')+bG$ for some $b \leq {\rm codim}(F,X) - 1 = \dim Y - 1 = a$, where $\Delta'$ is the proper transform of $\Delta$ on $X'$. Let
\begin{align*}
    L':&=-(K_{X'}+\Delta')-f'^*(\mu^*D-aE)\\
    &=-(K_{X'}+\Delta')-\pi^*f^*D+aG
    =\pi^*L+(a-b)G\geq \pi^*L,
\end{align*}
which is also effective with stable base locus that does not dominate $Y'$.

Now, suppose there exists an effective $\bQ$-divisor $N\sim_{\bQ} L$ with $\Supp N\supset F$. Then we have $\Supp(\pi^*N)\supset\Supp G$. Now define $$N':=\pi^*N+(a-b)G\sim_{\bQ} \pi^*L+(a-b)G=L',$$ then $N'$ is effective with $\Supp N'\supset\Supp G$. Therefore, there exists a rational number $0<\delta\ll 1$, such that $\Gamma:=N'-\delta G\geq 0$, then $$L'-\delta f'^*E=L'-\delta G\sim_{\bQ} N'-\delta G=\Gamma\geq 0.$$ So we can apply Proposition \ref{egeff} on $X',Y',\mu^*D-aE,L', \delta E,$ and $\Gamma$ to conclude that $f'^*(-K_{Y'}-\mu^*D+aE-\varepsilon\delta E)+\Delta'^-$ is $\bQ$-effective for $0<\varepsilon\ll 1$, hence $f'^*(-K_{Y'}-\mu^*D+aE-\varepsilon\delta E)+\pi^*P$ is $\bQ$-Cartier and $\bQ$-effective. Note that
\begin{align*}
    f'^*(-K_{Y'}-\mu^*D+aE-\varepsilon\delta E)+\pi^*P&=\pi^*(f^*(-K_Y-D)+P)-\varepsilon\delta G.
\end{align*}
Therefore, $G\subset \Supp(\pi^*M)$ by the uniqueness of $M$, thus $F\subset \Supp M$, but this is a contradiction to our choice of $F$, hence the proof is completed.
\end{proof}

\begin{proof}[Proof of Theorem \ref{antiK}]
First, we prove the theorem under the assumption that $Y$ is smooth. By Theorem \ref{eff}, we have $-K_Y-D$ is $\bQ$-effective, and by Theorem \ref{kd0}, we only need to consider the case of $\kappa(Y,-K_Y-D)>0$.  Consider the following commutative diagram
\begin{center}
    \begin{tikzcd}
&X'\arrow[r,"\pi"]\arrow[d,"f'"] &X\arrow[d,"f"]\\
&Y'\arrow[r,"\mu"]\arrow[d,"g'"] &Y\arrow[dashed,d,"g"]\\
&Z'\arrow[dashed,r,"\nu"] &Z.
\end{tikzcd}
\end{center}
Where
\begin{enumerate}
    \item $g: Y\dashrightarrow Z$ is the rational map defined by $m(-K_Y-D)$ for some sufficiently divisible $m\gg 0$ such that $\dim(\overline{\Img(g)})=\kappa(Y,-K_Y-D)$ and ${\bf Bs}(m(-K_Y-D))={\bf B}(-K_Y-D)$.
    \item $g':Y'\rightarrow Z'$ is a minimal resolution of Iitaka fibration induced by $m(-K_Y-D)$. More precisely, consider the following diagram
    \begin{center}
    \begin{tikzcd}
&Y_0\arrow[r,"\mu_0"]\arrow[d,"g_0"] &Y\arrow[dashed,d,"g"]\\
&Z'\arrow[dashed,r,"\nu"] &Z.
\end{tikzcd}
\end{center}
Here $g_0:Y_0\rightarrow Z'$ is the Iitaka fibration induced by $m(-K_Y-D)$. Note that $Y_0$ is possibly singular, but by the smoothness of $Y$, the singularity of $Y_0$ must contained in $\Exc(\mu_0)$. Now we let $\mu':Y'\rightarrow Y_0$ be a minimal resolution of singularities.
    \item $\mu:= \mu_0\circ \mu':Y'\rightarrow Y$ and $g':=g_0\circ\mu':Y'\dashrightarrow Z'$ are the induced maps, note that $\mu$ is birational and $g'$ is an algebraic fibre space.
    \item $X'$ is the normalization of the main component of $X\times_{Y} Y'$, and $\pi:X'\rightarrow X$ and $f': X'\rightarrow Y'$ are the induced morphism. Note that we have $f'(\Exc (\pi))\subset \Exc(\mu)$. 
    \item $F$ is a general fibre of $f$ (by the construction of $X'$ and $f'$, $F$ is also a general fibre of $f'$).
    \item $G'$ is a general fibre of $g'$. Note that $G'$ is smooth by the smoothness of $Y'$
    \item $W':=f'^{-1}(G')$ is a general fibre of $g'\circ f'$.
\end{enumerate}

Here $X'$ and $f'$ are constructed by the following way: Let $X_0$ be the main component of $X\times_{Y} Y'$ with projections $f_0:X_0\rightarrow Y'$ and $\pi_0:X_0\rightarrow X$. Then we have $\Exc(\pi_0)\subset f_0^{-1}(\Exc(\mu))$ and $\pi_0(\Exc(\pi_0))\subset f^{-1}(\mu(\Exc(\mu)))$. Since $X$ is normal, $X_0-\Exc(\pi_0)\cong X- \pi_0(\Exc(\pi_0))$ is normal. Hence for the normalization $ \pi':X'\rightarrow X_0$ of $X_0$, $\pi'$ is an isomorphism over $X_0-\Exc(\pi_0)$. Thus, we have $$\Exc(\pi)\subset \pi'^{-1}(\Exc(\pi_0)) \subset f'^{-1}(\Exc(\mu)),$$ where $f'= f_0\circ \pi'$ and $\pi:=\pi_0\circ \pi'$.

By the easy addition formula \cite[Corollary 1.7]{Mo}, we have
\begin{align*}
    \kappa(X',\pi^*L)&\leq \kappa(W',(\pi^*L)|_{W'})+\dim Z'.
\end{align*}
Since $\kappa(X',\pi^*L)=\kappa(X,L)$ and $\dim Z'=\kappa(Y,-K_Y-D)$, this implies
\begin{align*}
    \kappa(X,L)\leq \kappa(W',(\pi^*L)|_{W'})+\kappa(Y,-K_Y-D).
\end{align*}
Thus, it remains to show that $\kappa(W',(\pi^*L)|_{W'})\leq \kappa(F,L_F).$ To show this, let $B:=\pi^*(K_{X/Y}+\Delta)-K_{X'/Y'}$, then
\begin{align*}
    &\mu^*(-K_Y-D)=-K_{Y'}-(\mu^*(K_Y+D)-K_{Y'}),\\
    &\pi^*L=-(K_{X'}+B)-f'^*(\mu^*(K_Y+D)-K_{Y'}).
\end{align*}
Next, write
\begin{align*}
    K_{Y'}=\mu^*K_Y+\sum a_iE_i,\  K_{X'}=\pi^*(K_X+\Delta)+\sum b_j P_j,
\end{align*}
then $B=\sum a_if'^*E_i-\sum b_jP_j$. Since $\Delta$ is effective, if $b_j> 0$ then $P_j$ must be $\pi$-exceptional, which implies $f'(P_j)\subseteq\Exc(\mu)$. Therefore, $f'(\Supp B^-)\subset \Exc(\mu)$ and hence $\Supp(B^-)\subset f'^{-1}\Exc(\mu)$. Note that by the construction of the Iitaka fibration, we may assume  $\mu_0(\Exc(\mu_0))$ is contained in ${\bf B}(-K_Y-D)$, which implies $\Exc(\mu_0)\subset \mu_0^{-1}{\bf B}(-K_Y-D)$. Note that by the smoothness of $Y$, we have the singularity of $Y_0$ must contained in $\Exc(\mu_0)$, which implies $\Exc(\mu')\subset \mu'^{-1}(\Exc(\mu_0))$ and hence $$\Exc(\mu)\subset \Exc(\mu') \cup  \mu'^{-1}(\Exc(\mu_0))=\mu'^{-1}(\Exc(\mu_0)) \subset \mu^{-1}({\bf B}(-K_Y-D)).$$ Thus, for sufficiently divisible $m\gg 0$, $\Supp(\mu^*(m(-K_Y-D)))\supset \Exc(\mu)$, which implies $$\Supp(f'^*\mu^*(m(-K_Y-D)))\supset f'^{-1}(\Exc(\mu))\supset \Supp(B^{-}).$$ Therefore, $P_{mn}:=f'^*\mu^*(mn(-K_Y-D))\geq B^-$ for $n\gg 0$. Consider the morphism $f'|_{W'}:W'\rightarrow G',$ whose general fibre is also a general fibre $F$ of $f$ by our construction. Then we have 
\begin{align*}
    \kappa(W',(f'^*\mu^*(-K_Y-D)+P_{mn})|_{W'})&=\kappa(W',f'^*\mu^*(-K_Y-D)|_{W'})\\
    &=\kappa(G',\mu^*(-K_Y-D)|_{G'})\\
    &=\kappa(G_0,\mu_0^*(-K_Y-D)|_{G_0})=0,
\end{align*}
where the last equality is from the structure of Iitaka fibration.

Now, consider the morphism $f'|_{W'}:W'\rightarrow G'$ and the divisors
\begin{align*}
    &-K_{G'}-(\mu^*(K_Y+D)|_{G'}-K_{G'})=\mu^*(-K_Y-D)|_{G'},\\
    &-(K_{W'}+B_{W'})-f'^*(\mu^*(K_Y+D)|_{G'}-K_{G'})=(\pi^*L)|_{W'}.
\end{align*}
Here $B_{W'}$ is defined by $(K_{X'}+B)|_{W'}=K_{W'}+B_{W'}$. Note that we have
\begin{align*}
(K_{W'}+B_{W'})|_F &=(K_{X'}+B)|_F\\
&=(\pi^*(K_X+\Delta)-f'^*(K_{Y'}-\mu^*K_Y))|_F\\
&=(\pi^*(K_X+\Delta))|_F\\
&\cong(K_X+\Delta)|_F\\
&= K_F+\Delta_F,
\end{align*}
where the isomorphism follows from the fact that $\pi$ is an isomorphism over general fibres $F$. In particular, let $B_F$ defined by $(K_{W'}+B_{W'})|_F=K_F+B_F$, then $B_F=\Delta_F$ under the natural isomorphism $\pi^{-1}(F)\cong F$, hence $(F,B_F)$ is klt. Hence we can apply Theorem \ref{kd0} (since $G'$ is smooth) to conclude that
\begin{align*}
    \kappa(W',(\pi^*L)|_{W'})\leq \kappa(F,-(K_{W'}+B_{W'})|_F)
    =\kappa(F,-(K_X+\Delta)|_F)=\kappa(F,L_F).
\end{align*}
This completes our proof when $Y$ is smooth.

For the general case, consider the following diagram
\begin{center}
    \begin{tikzcd}
&X'\arrow[r,"\pi"]\arrow[d,"f'"] &X\arrow[d,"f"]\\
&Y'\arrow[r,"\mu"] &Y.
\end{tikzcd}
\end{center}

Where $Y'$ is a resolution of singularities of $Y$, and $X'$ is a normalization of the main component of $X \times_Y Y'$. Similar to the above argument, we may assume $f'(\Exc (\pi))\subset \Exc(\mu)$. Now we let $L=-(K_X+\Delta)-f^*D$, and write $\pi^*L=-(K_{X'}+\Delta')+E^+-E^--f'^*\mu^*D$, where $\Delta'$ is the proper transform of $\Delta$ on $X'$, and $E^+$, $E^-$ are effective $\pi$-exceptional $\bQ$-divisors defined by the equality $K_{X'}+\Delta'=\pi^*(K_X+\Delta)+E^+-E^-$. Note that by the construction of $X'$, both $E^+$ and $E^-$ are vertical with $f(\Supp E^+), f(\Supp E^-)\subset \Exc(\mu)$, hence there exists an effective $\mu$-exceptional divisor $N$ on $Y'$ such that $f'^*N\geq E^+\geq E^+-E^-$. Now, since the theorem holds if the base is smooth, we have \begin{align*}
\kappa(X,L)=\kappa(X',\pi^*L)&\leq \kappa(X',-(K_{X'}+\Delta')-f^*\mu^*D+f'^*N)\\
&\leq \kappa(F,-(K_F+\Delta_F))+\kappa(Y',-K_{Y'}-\mu^*D+N).
\end{align*}
Note that $-(K_{X'}+\Delta')-f^*\mu^*D+f'^*N=\pi^*L+(f^*N-(E^+-E^-))$ is still effective with the stable base locus does not dominant $Y'$. Now, we write $K_{Y'}=\mu^*K_Y+B^+-B^-$ for some $\mu$-exceptional divisors $B^+,B^-$. Then we have 
\begin{align*}
\kappa(Y',-K_{Y'}-\mu^*D+N)&=\kappa(Y',\mu^*(-K_Y-D)+N+B^--B^+)\\
 &\leq \kappa(Y',\mu^*(-K_Y-D)+N+B^-)\\
 &=\kappa(Y',\mu^*(-K_Y-D))\\
 &= \kappa(Y,-K_Y-D)
\end{align*}
since $N$ and $B$ are $\mu$-exceptional, hence the result holds for general case.
\end{proof}

The following corollaries directly follow from Theorem \ref{antiK}:

\begin{corollary}
Let $(X,\Delta)$ be a log pair on $X$ and $f:X\rightarrow Y$ be an algebraic fibre space between normal projective varieties. Suppose $Y$ is $\bQ$-Gorenstein and $(F,\Delta_F)$ is klt for general fibres $F$. Then:
\begin{enumerate}
    \item Let $D$ be a $\bQ$-Cartier divisor on $X$ such that $-(K_X+\Delta)-f^*D$ is $\bQ$-effective and the stable base locus $\textbf{B}(-(K_X+\Delta)-f^*D)$ does not dominate $Y$, then $$\dim Y-\kappa(Y,-K_Y-D)\leq \dim X- \kappa (X,-(K_X+\Delta)-f^*D).$$
    
    \item Suppose $-(K_{X/Y}+\Delta)$ is $\bQ$-effective with ${\bf B}(-(K_{X/Y}+\Delta))$ that does not dominate $Y$, then for any effective $\bQ$-Cartier divisor $D_0$ on $Y$, we have $$\kappa(X,-(K_{X/Y}+\Delta)+f^*D_0)\leq \kappa(F,-(K_F+\Delta_F))+\kappa(Y,D_0).$$
\end{enumerate}
\end{corollary}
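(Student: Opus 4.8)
The plan is to obtain both parts as immediate consequences of Theorem \ref{antiK}; the only real content is choosing the correct divisor on $Y$ to feed into that theorem and checking that the hypothesis on stable base loci survives the substitution.

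For (1) I would apply Theorem \ref{antiK} directly with $L=-(K_X+\Delta)-f^*D$: its hypotheses are exactly the ones assumed here (in particular $L_F=L|_F=-(K_F+\Delta_F)$ with $(F,\Delta_F)$ klt, and $\textbf{B}(L)$ does not dominate $Y$), so it gives
$$\kappa(X,L)\le \kappa(F,L_F)+\kappa(Y,-K_Y-D).$$
Since $L$ is $\bQ$-effective with stable base locus not dominating $Y$, the restriction $L_F$ is $\bQ$-effective, whence $0\le \kappa(F,L_F)\le \dim F=\dim X-\dim Y$. Plugging this bound into the displayed inequality and rearranging turns it into $\dim Y-\kappa(Y,-K_Y-D)\le \dim X-\kappa(X,L)$, which is the assertion. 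All terms here are finite: $\kappa(X,L)\ge 0$ because $L$ is $\bQ$-effective, and $\kappa(Y,-K_Y-D)\ge 0$ by Theorem \ref{eff}.

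For (2) I would set $D:=-K_Y-D_0$, which is $\bQ$-Cartier because $Y$ is $\bQ$-Gorenstein and $D_0$ is $\bQ$-Cartier, and then observe
$$-(K_X+\Delta)-f^*D=-(K_{X/Y}+\Delta)+f^*D_0=:L,\qquad -K_Y-D=D_0.$$
Then $L$ is $\bQ$-effective because $-(K_{X/Y}+\Delta)$ is $\bQ$-effective and $f^*D_0\ge 0$; restricting to a general fibre, $(f^*K_Y)|_F=0$ and $(f^*D_0)|_F=0$, so $L|_F=-(K_F+\Delta|_F)=-(K_F+\Delta_F)$, and $(F,\Delta_F)$ is klt by hypothesis. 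For the base-locus condition I would use the inclusion $\textbf{B}(M+N)\subseteq\textbf{B}(M)\cup\Supp N$ for $N$ effective and $\bQ$-Cartier, applied with $M=-(K_{X/Y}+\Delta)$ and $N=f^*D_0$: the set $\textbf{B}(M)$ does not dominate $Y$ by assumption, while $\Supp f^*D_0=f^{-1}(\Supp D_0)$ maps into the proper closed subset $\Supp D_0\subsetneq Y$, so $\textbf{B}(L)$ does not dominate $Y$. Theorem \ref{antiK} then yields $\kappa(X,L)\le \kappa(F,-(K_F+\Delta_F))+\kappa(Y,D_0)$, which is precisely the claimed inequality.

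The only step that is more than bookkeeping is the inclusion $\textbf{B}(M+N)\subseteq\textbf{B}(M)\cup\Supp N$ for effective $N$, which follows by multiplying a section of a suitable multiple of $M$ nonvanishing at a point outside $\textbf{B}(M)\cup\Supp N$ by the corresponding section of that multiple of $N$; this is standard, so in truth there is no substantial obstacle here. Everything else is substitution together with the elementary fact that the Iitaka dimension of a divisor cannot exceed the dimension of the variety.
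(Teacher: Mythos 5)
Your proposal is correct and follows exactly the route the paper intends: the paper states these corollaries as direct consequences of Theorem \ref{antiK}, and your argument (part (1) by bounding $\kappa(F,L_F)\le\dim X-\dim Y$ and rearranging, part (2) by substituting $D=-K_Y-D_0$ together with the standard inclusion $\textbf{B}(M+N)\subseteq\textbf{B}(M)\cup\Supp N$ for effective $N$) is just the expected bookkeeping, carried out correctly.
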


\begin{remark}
    There is another way to prove Theorem \ref{antiK}: First, we prove the theorem under the assumption that $f$ is equidimensional. Under this assumption, $f'$ is also equidimensional, hence so does $W'\rightarrow G'$. Therefore, we can prove the inequality holds for equidimensional morphism by the same steps of the above proof. Then for the general case, we can let $X'\rightarrow Y'$ be the normalization of the flattening $f$, and use the same argument to prove the inequality holds for $f$ by the fact the inequality holds for $f'$.
\end{remark}

\section{Further discussions and questions}

\subsection{Asymptotic invariants of the anti-canonical divisor}
In \cite[Corollary 3.5, Remark 3.6]{EIM}, Ejiri, Iwai, and Matsumura describe the asymptotic base locus of $-K_Y-D$ when $Y$ has at worst canonical singularities. In Theorem \ref{pe}, Theorem \ref{big}, and Theorem \ref{eff}, we proved the positivity of $-K_Y-D$ without assuming that $Y$ has at worst canonical singularities.  Thus, it is natural to ask the following question:

\begin{question} 
Can we describe the asymptotic base locus of $-K_Y-D$  without assuming that $Y$ has at worst canonical singularities?
\end{question}

It seems not easy to show that \cite[Corollary 3.5, Remark 3.6]{EIM} is still true when $Y$ has klt singularities or worse (if it is true). Naively thinking, if $Y$ has at worst canonical singularities, then for any resolution $\mu:Y'\rightarrow Y$, we have $\mu^*(-K_Y)\geq-K_{Y'}$. Therefore, if we can compute the intersection number and/or asymptotic invariants of $-K_{Y'}$ on some birational model $Y'$ (for example, as the proof of \cite[Theorem 4.1]{FG12}, which uses \cite[Theorem 2]{K98} to compute $(-K_Y.C)$ on a higher birational model), then it will give a lot of information about asymptotic invariants of $-K_Y$. But this approach does not work without assuming $Y$ has canonical singularities.

Also, comparing \cite[Corollary 4.7]{EG}, \cite[Theorem 1.2]{CCM} and \cite[Theorem 3.9]{EIM} to our results in Section 4, it seems that the behavior of nef (relative) anti-canonical divisors and effective (relative) anti-canonical divisor with the stable base locus that does not dominate $Y$ are very similar, so we can also ask the following question:

\begin{question} 
Does the inequality of Theorem \ref{antiK} still hold if $L$ is $\bQ$-effective and ${\bf B}_-(L)$ does not surject onto $Y$ (instead of the assumption ${\bf B}(L)$ does not dominate $Y$)?
\end{question}

\subsection{Rational map.}

Comparing \cite[Section 4 and Section 5]{EG} to Theorem \ref{antiK}, it seems natural to ask whether Theorem \ref{antiK} holds if $f$ is an almost holomorphic fibration. Unfortunately, we have the following counterexample:

\begin{example}
Let $X:=\bP^2\times \bP^1$, and $p_1:X\rightarrow \bP^2$ be the first projection. Then let $\mu:Y\rightarrow \bP^2$ be the blow-up of 13 general points $P_1,...,P_{13}$ on $\bP^2$, and denote $E_1,...,E_{13}$ be the corresponding exceptional divisors. Then we have $E_i^2=-1$ and $(E_i.E_j)=0$ if $i\neq j$. Since 14 general points define a quartic plane curve, for a general point $P$ on $\bP^2$, there is a quartic $C_0$ passing $P$ and all $P_i$ with multiplicity 1. Now, let $C$ be the proper transform of $C_0$ on $Y$. Then we have $(C.E_i)=1$ for all $i$, and
\begin{align*}
    (-K_Y.C)=(-\pi^*K_{\bP^2}-\sum E_i.\pi^*C_0-\sum E_i)=-1. 
\end{align*}
Therefore, $-K_Y$ is not almost nef, hence not pseudo-effective by \cite[Proposition 4.2 and 4.5]{BKKMSU}. Let $f:X\dashrightarrow Y$ be the induced rational map, which is almost holomorphic since the fibre is well-defined and closed over $Y-\Exc(\mu)$. Then $-K_X$ is ample, but $-K_Y$ is not effective, hence the inequality in Theorem \ref{antiK} does not hold for the map $f$.
\end{example}

This example also shows that the last assertion of Theorem \ref{pe}, Theorem \ref{big}, and Theorem \ref{eff} can not be generalized to the situation that $f$ is only an almost holomorphic fibration even if $D=0$. However, by Lemma \ref{eg1}, we have the following result, which is a generalization of \cite[Lemma 4.1]{Den}:

\begin{proposition}
Let $f:X\dashrightarrow Y$ be a birational map between normal projective varieties. Let $X_0,Y_0$ be the maximal open sets of $X,Y$ such that $f|_{X_0}:X_0\rightarrow Y_0$ is a morphism. Let $\Delta$ be a $\bQ$-divisor such that both $K_X+\Delta$ and $K_Y+f_*\Delta$ are $\bQ$-Cartier. Suppose that  $\pm(K_X+\Delta)$ is pseudo-effective (resp. effective, big), and $Y-Y_0$ has codimension at least 2, then $\pm(K_Y+f_*\Delta)$ is pseudo-effective (resp. effective, big). In particular, this result holds if $f$ is either a divisorial contraction or a $(K_X+\Delta)$-flip. 
\end{proposition}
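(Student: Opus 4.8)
The plan is to reduce the statement to the numerical/divisor-class comparison along the birational map $f$ and then invoke the known characterizations of pseudo-effective, effective, and big divisors in terms of asymptotic base loci (the dictionary recalled in Section 2) together with Lemma \ref{eg1}. First I would resolve the indeterminacy of $f$: choose a common resolution $p : W \to X$, $q : W \to Y$ with $W$ smooth (or at least normal), so that $q = f \circ p$ on the locus where $f$ is defined. Write $K_W + \Delta_W = p^*(K_X + \Delta) + E$, where $\Delta_W$ is the strict transform of $\Delta$ and $E$ is $p$-exceptional (with $\lceil E \rceil \geq 0$ on the part that matters; more precisely, since we only assume $K_X+\Delta$ is $\bQ$-Cartier, we set $\Delta_W + E' = p^*(K_X+\Delta) - K_W$ with $E'$ supported on $p$-exceptional divisors). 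The key point is that $p^*(K_X+\Delta)$ and $p^*(K_Y + f_*\Delta)$ pulled back via $q$ differ only by $q$-exceptional and $p$-exceptional divisors, because $f_* \Delta$ is the pushforward of $\Delta$ and the only discrepancy between the two models is supported on exceptional loci of $p$ and $q$.

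Concretely, I would compare $A := p^*(K_X + \Delta)$ and $B := q^*(K_Y + f_*\Delta)$ as $\bQ$-Cartier divisors on $W$. Since $p_* A = K_X + \Delta$ and $q_* B = K_Y + f_*\Delta$, and since $q_* A = p_*(\text{something})$... the cleanest route: because $p_*(A - B)$ is $\bQ$-linearly trivial "in codimension one up to exceptionals", one shows $A - B = F_p - F_q$ where $F_p$ is $p$-exceptional and $F_q$ is $q$-exceptional, both effective is \emph{not} automatic, but here is where the hypothesis $\mathrm{codim}(Y \setminus Y_0) \geq 2$ enters: it forces $q$ to have no exceptional divisors over the locus we care about, i.e. every $q$-exceptional prime divisor on $W$ maps into $Y \setminus Y_0$ which has codimension $\geq 2$, so $f^{-1}$ contracts no divisor, meaning $f$ itself is a morphism in codimension one from the $Y$ side. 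Thus $B - A$ is $p$-exceptional: $B = A + (\text{$p$-exceptional})$, and symmetrically $A - B$ has $q$-exceptional support. Combining, $A$ and $B$ agree outside the union of $p$- and $q$-exceptional loci.

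Now I would run the three positivity transfers. For pseudo-effectivity: if $\pm(K_X+\Delta)$ is pseudo-effective then $\pm A = \pm p^*(K_X+\Delta)$ is pseudo-effective on $W$; I want $\pm(K_Y + f_*\Delta)$ pseudo-effective, equivalently $\pm B$ pseudo-effective on $W$ (pseudo-effectivity descends along the birational morphism $q$ since $q_*$ of a pseudo-effective class is pseudo-effective, using $\mathbf{B}_-(D) \neq W$). Since $\pm B = \pm A + (\text{$p$-exceptional})$ and a $p$-exceptional divisor is $q$-trivial... this last step is exactly Lemma \ref{eg1}(2): $q^*(\pm(K_Y+f_*\Delta)) \otimes \cO_W(\text{effective $q$-exceptional})$ being weakly positive gives $\pm(K_Y+f_*\Delta)$ weakly positive, i.e. pseudo-effective; and to feed Lemma \ref{eg1}(2) I use that this twisted sheaf equals (a twist of) the pullback of $\pm(K_X+\Delta)$, which is pseudo-effective, hence its pullback is weakly positive by Lemma \ref{eg1}(1) once we check there is no relevant exceptional divisor on the $X$ side — and there isn't a \emph{$q$}-exceptional one by the codimension hypothesis, while $p$-exceptional ones can be absorbed into the twist. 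For effectivity: pull back an effective representative of $\pm(K_X+\Delta)$, add the effective $p$-exceptional correction, push down by $q$; since $q_*$ of an effective divisor is effective and the correction terms are handled by $\mathrm{codim} \geq 2$ (they push to codimension $\geq 2$, hence vanish), we get $\pm(K_Y + f_*\Delta)$ effective. For bigness: same with the observation that bigness is pullback-stable and that $q_*$ of a big class stays big (volume is preserved under birational pushforward modulo exceptionals). The final sentence about divisorial contractions and flips is immediate: a divisorial contraction has $Y$ with $Y\setminus Y_0$ a point-or-worse... actually for a divisorial contraction $Y \setminus Y_0$ has codimension $\geq 2$ automatically, and for a flip $f$ is an isomorphism in codimension one so both exceptional loci have codimension $\geq 2$.

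\textbf{Main obstacle.} The delicate point is the precise bookkeeping of the discrepancy divisor $B - A$ on $W$ and verifying it is supported exactly on exceptional loci with the right sign so that Lemma \ref{eg1} applies cleanly — in particular making sure that the $p$-exceptional part that gets added can legitimately be treated as an effective $f$-exceptional (equivalently $q$-exceptional after pushforward is zero) twist, and that the codimension-$\geq 2$ hypothesis on $Y \setminus Y_0$ genuinely rules out $q$ contracting a divisor. Once that structural lemma ($B = A + E_p$ with $E_p$ appropriately exceptional, and $A = B + E_q$ with $E_q$ $q$-exceptional) is nailed down, the three positivity statements follow formally from the Section 2 dictionary and Lemma \ref{eg1}.
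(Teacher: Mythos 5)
Your overall route is the same as the paper's: pass to a resolution of indeterminacy $p\colon W\to X$, $q\colon W\to Y$, observe that $p^*(K_X+\Delta)$ and $q^*(K_Y+f_*\Delta)$ differ only by exceptional divisors, and transfer positivity via Lemma \ref{eg1}(2) (pseudo-effective case), pushforward of an effective representative (effective case), and a separate argument for bigness. The paper phrases the comparison through $K_W$ and the two proper transforms $\Delta_W$ and $(f_*\Delta)_W$ rather than through $A$ and $B$, but the content is the same; the only genuine divergence is the big case, where the paper perturbs $\Delta$ by $\varepsilon G'$ with $f_*G'=G$ for an arbitrary $\bQ$-divisor $G$ on $Y$ and applies the pseudo-effective case to conclude $-(K_Y+f_*\Delta)-\varepsilon G$ is pseudo-effective for every $G$, hence the class is interior to the pseudo-effective cone, whereas you push forward sections/volumes; both are fine.

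There is, however, a misstatement in your key bookkeeping step. It is false in general that $B-A=q^*(K_Y+f_*\Delta)-p^*(K_X+\Delta)$ is $p$-exceptional: already for a divisorial contraction (take $W=X$, $p=\mathrm{id}$, $q=f$) the difference is supported on the contracted divisor, which is $q$-exceptional but not $p$-exceptional. The statement that is both true and actually needed is the symmetric one you also wrote down: $A-B$ is supported on $q$-exceptional divisors. This follows from $q_*(A-B)=0$, and that identity is precisely where the hypothesis $\mathrm{codim}(Y\setminus Y_0)\ge 2$ enters: it guarantees that $f^{-1}$ contracts no divisor, hence every $p$-exceptional divisor of $W$ is $q$-exceptional, $q_*p^*=f_*$ on divisors, and $f_*K_X=K_Y$, so $f_*(K_X+\Delta)=K_Y+f_*\Delta$. (Your justification, that every $q$-exceptional prime divisor of $W$ maps into $Y\setminus Y_0$, is not the relevant statement—$q$-exceptional divisors always have small image; the point is that a divisor of $Y$ contracted by $f^{-1}$ would have its generic point inside $Y_0$, which the structure of $f|_{X_0}\colon X_0\to Y_0$ forbids.) Once you work with the $q$-exceptional formulation, the sign issue you flag as the main obstacle dissolves: writing $q^*(\pm(K_Y+f_*\Delta))=p^*(\pm(K_X+\Delta))+G^+-G^-$ with $G^{\pm}\ge 0$ and $q$-exceptional, you twist by $G^-$ alone and apply Lemma \ref{eg1}(2), exactly as the paper does. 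Finally, Lemma \ref{eg1}(1) is not applicable to the pullback step (the resolution $p$ does have exceptional divisors), but it is also unnecessary: the pullback of a pseudo-effective $\bQ$-Cartier divisor under a birational morphism is pseudo-effective for elementary reasons.
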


\begin{proof}
We work on the case of anti-canonical divisors, and the case of canonical divisors can be derived in the same way. Let $g:W\rightarrow Y$ be a resolution of indeterminacy of $f$ such that $W$ is normal and denote $\pi:W\rightarrow X$ be the corresponding birational morphism.

Suppose that $-(K_X+\Delta)$ is pseudo-effective, then we write $$K_W+\Delta_W=\pi^*(K_X+\Delta)+E_X,$$ where $\Delta_W$ is the proper transform of $\Delta$ on $W$, and $E_X$ is $\pi$-exceptional. Then we have $-\pi^*(K_X+\Delta)=-K_W-\Delta_W+E_X$ is pseudo-effective. Next, let $(f_*\Delta)_W$ be the proper transform of $f_*\Delta$ on $W$. Then there is a $g$-exceptional divisor $E_Y$ such that $K_W+(f_*\Delta)_W=g^*(K_Y+f_*\Delta)+E_Y.$
Therefore,
\begin{align*}
    -g^*(K_Y+f_*\Delta)&=(-K_W-\Delta_W+E_X)+(\Delta_W-(f_*\Delta)_W)+E_Y-E_X.
\end{align*}
Note that $(\Delta_W-(f_*\Delta)_W)$ is $g$-exceptional, and since $Y-Y_0$ has codimension at least 2, every $\pi$-exceptional divisor is $g$-exceptional. So $-g^*(K_Y+f_*\Delta)=(-K_W-\Delta_W+E_X)$ + ($g$-exceptional divisors). Hence by Lemma \ref{eg1}(2), $-(K_Y+f_*\Delta)$ is pseudo-effective.

Suppose now that $-(K_X+\Delta)$ is effective. Given a birational morphism $\pi:X'\rightarrow X$, if $\pi^*D$ is effective outside the exceptional locus, then $D$ is effective itself. Thus, under the assumption that $-(K_X+\Delta)$ is effective, we can use the same argument of the pseudo-effective case to show that $-(K_Y+f_*\Delta)$ is effective, by using this fact at where we use Lemma \ref{eg1} in the pseudo-effective case.

Suppose now that $-(K_X+\Delta)$ is big. Since $Y-Y_0$ has codimension at least 2, for any $\bQ$-divisor $G$ on $Y$, there is a $\bQ$-divisor $G'$ on $X$ such that $f_*G'=G$. Since $-(K_X+\Delta)$ is big, we have $-(K_X+\Delta+\varepsilon G')$ is pseudo-effective for $\varepsilon$ sufficiently small. Thus, replacing $\Delta$ by $\Delta+\varepsilon G'$, the pseudo-effective case implies $-(K_Y+f_*(\Delta+\varepsilon G'))=-(K_Y+f_*\Delta)-\varepsilon G$ is pseudo-effective. Since $G$ is arbitrary, this implies $-(K_Y+f_*\Delta)$ is big. 
\end{proof}

By the above proposition, it is natural to ask whether we can generalize Theorem \ref{pe}, Theorem \ref{big}, and Theorem \ref{eff} to almost holomorphic fibration with $Y-Y_0$ has codimension at least 2. To answer this question, first, we define the pullback of divisors under rational maps: 

\begin{definition}
(cf. \cite[Definition 1.2]{M}) Let $f:X\dashrightarrow Y$ be a rational map. Then for a ($\bQ$-)divisor $D$ on $Y$, we can define the pullback $f^*D$ in the following way: Let $\pi:X'\rightarrow X$ be a resolution of indeterminacy on $f$, and $f':X'\rightarrow Y$ be the corresponding resolution. Then we define $f^*D:=\pi_*f'^*D$. Note that this definition does not depend on the choice of resolution because the push-forward of exceptional divisors is zero.
\end{definition}

Also, we need the following lemma:

\begin{lemma}\label{effexc}
Let $f:X\rightarrow Y$ be an algebraic fibre space between normal varieties. Let $E$ be an effective $f$-exceptional $\bQ$-divisor on $X$, and $D$ be a $\bQ$-divisor on $Y$. Suppose $f^*D+nE$ is $\bQ$-effective for some $n\in \bN$, then $D$ is $\bQ$-effective.
\end{lemma}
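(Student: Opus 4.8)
\textbf{Proof plan for Lemma \ref{effexc}.}

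The plan is to reduce the statement to a computation of global sections, using the fact that an algebraic fibre space satisfies $f_*\cO_X = \cO_Y$ together with the projection formula. First I would pick a positive integer $m$ such that $m(f^*D + nE) \sim mf^*D + mnE$ is linearly equivalent to an effective Cartier divisor; since being effective for a $\bQ$-divisor $D$ is equivalent to $\kappa(Y,D)\ge 0$, and since $\kappa$ is insensitive to replacing $D$ by a positive multiple, it suffices to show $H^0(Y, mD) \ne 0$ for such $m$. So from the start I would simply assume $f^*D + nE$ is an effective Cartier divisor and aim to show $H^0(Y,D)\ne 0$.

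The key step is the chain of inclusions
\begin{align*}
    H^0(X, f^*D + nE) = H^0(Y, f_*\cO_X(f^*D + nE)) = H^0\bigl(Y, \cO_Y(D)\otimes f_*\cO_X(nE)\bigr),
\end{align*}
where the second equality is the projection formula (valid since $f^*D$ is Cartier). The point now is that $f_*\cO_X(nE) = \cO_Y$ because $E$ is an effective $f$-exceptional divisor: $\cO_Y \subseteq f_*\cO_X(nE)$ since $nE \ge 0$, and conversely $f_*\cO_X(nE)$ is a subsheaf of $f_*\cO_X$ away from $f(\Supp E)$, hence a subsheaf of $f_*\cO_X = \cO_Y$ on the complement of a codimension $\ge 2$ set; since $f_*\cO_X(nE)$ is torsion-free (indeed a subsheaf of the constant sheaf of rational functions) and $Y$ is normal, this inclusion extends over all of $Y$, giving $f_*\cO_X(nE) = \cO_Y$. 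Therefore $H^0(X, f^*D+nE) = H^0(Y,\cO_Y(D))$. Since $f^*D + nE$ is effective, the left-hand side is nonzero, so $H^0(Y,D)\ne 0$ and $D$ is effective.

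The main obstacle — really the only subtle point — is justifying $f_*\cO_X(nE) = \cO_Y$ carefully, i.e. checking that $E$ being $f$-exceptional (so $f(\Supp E)$ has codimension $\ge 2$ in $Y$) genuinely forces sections of $\cO_X(nE)$ to descend to \emph{regular} functions on $Y$. This is where normality of $Y$ and Hartogs' extension are used: a rational function on $Y$ that is regular outside a codimension $\ge 2$ set is regular everywhere. Once this is in hand the rest is formal. One could alternatively argue divisorially: write $f^*D + nE \sim \sum a_i D_i$ with $a_i\ge 0$, push forward by $f$, note $f_* E = 0$ and $f_*(f^*D) = D$ up to the $f$-exceptional part, and observe that the $f$-exceptional components contribute nothing — but the sheaf-theoretic argument above is cleaner and avoids tracking which $D_i$ are $f$-exceptional.
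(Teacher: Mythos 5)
Your argument is correct and is essentially the paper's own proof in sheaf-theoretic packaging: the paper restricts to $Y_0=Y\setminus f(\Supp E)$ and $X_0=f^{-1}(Y_0)$ and uses $H^0(X_0,f^*D|_{X_0})\cong H^0(Y_0,D|_{Y_0})\cong H^0(Y,D)$, which is exactly the content of your identity $f_*\cO_X(nE)=\cO_Y$ (image of $E$ of codimension $\ge 2$, $f_*\cO_X=\cO_Y$, and Hartogs/normality of $Y$). The only step to tighten is the projection formula, which needs $\cO_Y(mD)$ to be a line bundle, so you should choose $m$ so that $mD$ itself (not merely $m(f^*D+nE)$) is Cartier; this is harmless since the lemma is applied to $\bQ$-Cartier divisors $D$.
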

\begin{proof}
Let $m\in \bN$ sufficiently divisible such that $mD$ and $mE$ has integer coefficients, and $m(f^*D+nE)$ is effective. Since $f$ is an algebraic fibre space and $f(\Supp E)$ has codimension at least 2 in $Y$, letting $Y_0:=Y-f(\Supp E)$, $X_0:=X-f^{-1}f(\Supp E)$, we have the following natural isomorphisms: $$H^0(X_0,f^*(mD)|_{X_0})\cong H^0(Y_0,(mD)|_{Y_0})\cong H^0(Y,mD) \cong H^0(X,f^*(mD)) .$$
In particular, if $(f^*(mD))|_{X_0}$ is effective, then so does $mD$. Thus, if $m(f^*D+nE)$ is effective, then $(f^*(mD))|_{X_0}$, and hence $mD$ itself is effective.
\end{proof}

Now we have the following generalization:

\begin{proposition}
Let $f:X\dashrightarrow Y$ be an almost holomorphic fibration between normal projective varieties, with $X_0,Y_0$ be the maximal open subsets of $X$ and $Y$ such that $f|_{X_0}:X_0\rightarrow Y_0$ is a morphism. Suppose that $Y-Y_0$ has codimension at least 2, then the conclusions of Theorem \ref{pe}, \ref{big}, and \ref{eff} still hold by replacing the assumption on ${\bf B}_-(L)$ (resp. ${\bf B}_+(L), {\bf B}(L)$) with the one that ${\bf B}_-(L)\cap X_0$ (resp. ${\bf B}_+(L)\cap X_0,$ ${\bf B}(L)\cap X_0$) does not surject onto $Y_0$. 
\end{proposition}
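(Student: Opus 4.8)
The plan is to reduce the almost-holomorphic case to the genuine algebraic fibre space case already treated in Theorems \ref{pe}, \ref{big}, and \ref{eff}, by passing to a resolution of indeterminacy and carefully tracking how the various asymptotic base loci, the boundary divisor, and the (anti)canonical divisors transform. Concretely, let $\pi:X'\to X$ be a resolution of the indeterminacy of $f$, with induced morphism $f':X'\to Y$; since $f$ is almost holomorphic we may arrange that $\Exc(\pi)$ is $f'$-vertical and that $\pi$ is an isomorphism over $X_0$, so that $f'$ is an honest algebraic fibre space. Define $\Delta'$ by $K_{X'}+\Delta'=\pi^*(K_X+\Delta)+E$ with $E$ $\pi$-exceptional, and write $E=E^+-E^-$; then $\pi^*L = -(K_{X'}+\Delta')-f'^*D + E$, and on general fibres $F'$ of $f'$ (which map isomorphically to general fibres of $f$) the pair $(F',\Delta'^+|_{F'})$ inherits the log canonical, resp. klt, hypothesis. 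The key point is that $\pi^*L$ differs from $-(K_{X'}+(\Delta'-E^+)^{+}\cdots)$ by the $\pi$-exceptional divisor $E$, so after absorbing $E^+$ into the boundary and $E^-$ into a perturbation one is in position to apply the already-proven theorems to $f':X'\to Y$.

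The heart of the argument is the base-locus bookkeeping. First I would check that the stable (resp. augmented, restricted) base locus of $\pi^*L$ restricted to $X_0':=\pi^{-1}(X_0)$ agrees with that of $L$ restricted to $X_0$ under the isomorphism $\pi|_{X_0'}$; since $X-X_0$ and hence its preimage plays no role for the fibre-space hypothesis over $Y_0$, and since $Y-Y_0$ has codimension $\ge 2$, the hypothesis that $\mathbf{B}_-(L)\cap X_0$ (resp.\ $\mathbf{B}_+(L)\cap X_0$, $\mathbf{B}(L)\cap X_0$) does not surject onto $Y_0$ translates into the statement that the corresponding base locus of $\pi^*L$ on $X'$ does not surject onto $Y$. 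Here one uses that a subset of $X'$ surjects onto $Y$ iff its intersection with $X_0'$ surjects onto $Y_0$, because $Y\setminus Y_0$ has codimension $\ge 2$ and the complement $X'\setminus X_0'$ lies over $Y\setminus Y_0$ together with $\pi$-exceptional stuff that is $f'$-vertical; any component dominating $Y$ must meet $X_0'$. The $\pi$-exceptional error terms $E^\pm$ are $f'$-vertical, so adding them does not affect whether a base locus dominates $Y$.

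Having arranged the hypotheses of Theorem \ref{pe} (or \ref{big}, or \ref{eff}) for $f':X'\to Y$ with boundary $\Delta'$, I would apply those theorems to obtain weak positivity of $\cO_{X'}(l(f'^*(-K_Y-D)+\Delta'^- + B'))$ (resp.\ $\bQ$-effectivity of $f'^*(-K_Y-D)+\Delta'^-$) for suitable effective $f'$-exceptional $B'$. The final step is to descend: push forward along $\pi$ using $\pi_*\cO_{X'}(\pi\text{-exceptional effective}) \cong \cO_X$ on the relevant open set, and note that $\pi_*\Delta'^- = \Delta^- + (\text{effective }\pi\text{-exceptional, hence }f\text{-exceptional})$ since $\Delta'^-$ differs from the proper transform of $\Delta^-$ only by components of $E^-$. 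Using Lemma \ref{eg1}(2) for the weak-positivity statements, and Lemma \ref{effexc} to strip off the $f$-exceptional contribution in the $\bQ$-effectivity statement, one recovers exactly the conclusions of the three theorems with $f$ in place of $f'$. For the bigness refinement in Theorem \ref{big}, the same perturbation argument by $\varepsilon f^*D_0$ goes through verbatim on $X'$ and descends the same way.

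The main obstacle I anticipate is the precise claim that "a closed (or countable union of closed) subset of $X'$ surjects onto $Y$ if and only if its trace on $X_0'$ surjects onto $Y_0$," which is exactly what lets the local hypothesis over $Y_0$ do global work; this requires the codimension $\ge 2$ assumption on $Y\setminus Y_0$ and a small argument that no component of the relevant base locus can be entirely contained in the locus over $Y\setminus Y_0$ while still dominating $Y$ — for a component dominating $Y$, its image is $Y$, so it cannot lie over the codimension $\ge 2$ set $Y\setminus Y_0$, hence it meets $X_0'$ and surjects onto $Y_0$. One must also be a little careful that the resolution $\pi$ can be chosen so that $\Exc(\pi)$ is genuinely $f'$-vertical, which is where almost-holomorphicity of $f$ (the fibre being well-defined and proper over $Y_0$) is used: over $Y_0$ the map $f$ is already a morphism so no exceptional divisor of $\pi$ is needed there, and everything exceptional lies over $Y\setminus Y_0$. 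Modulo these points, the argument is a routine, if notation-heavy, transfer of the established results across the birational modification $\pi$.
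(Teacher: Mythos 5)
Your proposal is correct and follows essentially the same route as the paper's own proof: resolve the indeterminacy so that the $\pi$-exceptional divisors lie over $Y\setminus Y_0$ (hence are $f'$-exceptional by the codimension~$\geq 2$ assumption), transfer the base-locus hypothesis to $\pi^*L$ for the honest fibre space $f':X'\rightarrow Y$ with the crepant boundary $\Delta'+E^+-E^-$, apply Theorems \ref{pe}, \ref{big}, \ref{eff} there, and descend using Lemma \ref{eg1}(2) and Lemma \ref{effexc}. The only cosmetic difference is that the paper checks ${\bf B}_-(\pi^*L)\subset\pi^{-1}({\bf B}_-(L))$ by an explicit ample-perturbation computation, while you phrase the same bookkeeping as agreement of the base loci over $X_0\cong\pi^{-1}(X_0)$ together with the observation that any component dominating $Y$ must meet this locus.
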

\begin{proof}
Let $\pi:X'\rightarrow X$ be a resolution of indeterminacy, $f':X'\rightarrow Y$ be the corresponding resolution. We can write $-(K_{X'}+\Delta')-f'^*D=\pi^*L+E^+-E^-$ for some effective $\pi$-exceptional divisor $E^+,E^-$, where $\Delta'$ is the proper transform of $\Delta$ on $X'$. Since $f$ is almost holomorphic, we may assume $f'(\Exc(\pi))\subset Y-Y_0$. In particular, every $\pi$-exceptional divisors are $f'$-exceptional.  

Now, suppose $L$ is pseudo-effective and $({\bf B}_-(L)\cap X_0)$ does not surject onto $Y_0$. Then ${\bf B}_-(-(K_{X'}+\Delta'+E^+-E^-)-f'^*D)={\bf B}_-(\pi^*L)$ is not surject onto $Y$ since ${\bf B}_-(\pi^*L)\subset \pi^{-1}({\bf B}_-(L))$. In fact, let $H$ be an ample Cartier divisor on $X$ and $A$ be an ample Cartier divisor on $X'$ such that $A-\pi^*H$ is also ample. Then for any $0 < \varepsilon \ll 1,$ we have 
\begin{align*}
{\bf B}(\pi^*L+\varepsilon A)&={\bf B}(\pi^*(L+\varepsilon H)+\varepsilon(A-\pi^*H))\\
&\subset {\bf B}(\pi^*(L+\varepsilon H))\subset \pi^{-1}({\bf B}(L+ \varepsilon H))\subset \pi^{-1}({\bf B}_-(L)).
\end{align*}
Hence ${\bf B}_-(\pi^*L)\subset \pi^{-1}({\bf B}_-(L))$ follows from the definition of ${\bf B}_-(-)$. Since $f(\Supp E)\neq Y$, we can apply Theorem \ref{pe} to conclude that for sufficiently divisible positive integer $l$, $\cO_{X'}(l(f'^*(-K_Y-D)+\Delta'^-+E^-+B))$ is weakly positive for some effective $f'$-exceptional divisor $B$, hence so is $\cO_X(l(f^*(-K_Y-D)+\Delta^-+\pi_*B))$. In particular, since $E^-$ is also $f'$-exceptional, by the weakly positivity of $\cO_{X'}(l(f'^*(-K_Y-D)+\Delta'^-+E^-+B))$ and Lemma \ref{eg1}(2), we conclude that if $\Delta$ is effective, then $-K_Y-D$ is pseudo-effective, which generalizes Theorem \ref{pe} (and hence Theorem \ref{big}). Using a similar method, and applying Lemma \ref{effexc}, we can also generalize Theorem \ref{eff}.
\end{proof}

\begin{remark}
{\em The above proof shows that even if we only assume $f$ is almost holomorphic,  $\cO_X(l(f^*(-K_Y-D)+\Delta^-+\pi_*B))$ is still weakly positive. But without the assumption of $Y-Y_0$ having codimension at least 2, then the pseudo-effectiveness (resp. effectiveness, bigness) of $\cO_X(l(f^*(-K_Y-D)+\pi_*B))$ and $\cO_{X'}(l(f'^*(-K_Y-D)+E^-+B))$ is not enough to imply the pseudo-effectiveness (resp. effectiveness, bigness) of $-K_Y-D$ even if $\Delta$ is effective. However, if $f$ is only a rational map, then $\Supp(E^-)$ may map onto $Y$ and hence the weak positivity of $\cO_X(l(f^*(-K_Y-D)+\Delta^-+\pi_*B))$ may not be true. }
\end{remark}

\begin{proposition}\label{alho}
The inequality of Theorem \ref{antiK} holds if $f$ is an almost holomorphic fibration with $Y-Y_0$ has codimension at least 2.
\end{proposition}
\begin{proof}
The proof is similar to the argument generalizing Theorem \ref{antiK} from smooth cases to the general case. Consider the following diagram
\begin{center}
    \begin{tikzcd}
&X'\arrow[r,"\pi"]\arrow[d,"f'"] &X\arrow[d,"f", dashed]\\
&Y'\arrow[r,"\mu"] &Y.
\end{tikzcd}
\end{center}

Where $Y'$ is log resolution of $(Y,Y-Y_0)$, and $X'$ is a normalization of the resolution of indeterminacy of $X \rightarrow Y'$. We may assume for the induced morphisms $\pi:X'\rightarrow X$ and $f': X'\rightarrow Y'$, $\Exc(\pi)$ is purely codimension 1 and $f'(\Exc (\pi))\subset \Exc(\mu)$. Now we let $L=-(K_X+\Delta)-f^*D$, and write $\pi^*L=-(K_{X'}+\Delta')+E^+-E^--f'^*\mu^*D$, where $\Delta'$ is the proper transform of $\Delta$ on $X'$, and $E^+$, $E^-$ are effective $\pi$-exceptional $\bQ$-divisors defined by the equality $K_{X'}+\Delta'=\pi^*(K_X+\Delta)+E^+-E^-$. Note that by the construction of $X'$, both $E^+$ and $E^-$ are vertical with $f(\Supp E^+), f(\Supp E^-)\subset \Exc(\mu)$, hence there exists an effective $\mu$-exceptional divisor $N$ on $Y'$ such that $f'^*N\geq E^+\geq E^+-E^-$. Now, since the theorem holds if $f$ is an algebraic fibre space, we have 
\begin{align*}
\kappa(X,L)=\kappa(X',\pi^*L)&\leq \kappa(X',-(K_{X'}+\Delta')-f^*\mu^*D+f'^*N)\\
&\leq \kappa(F,-(K_F+\Delta_F))+\kappa(Y',-K_{Y'}-\mu^*D+N).
\end{align*}
Note that $-(K_{X'}+\Delta')-f^*\mu^*D+f'^*N=\pi^*L+(f^*N-(E^+-E^-))$ is still effective with the stable base locus does not dominant $Y'$. Now, we write $K_{Y'}=\mu^*K_Y+B^+-B^-$ for some $\mu$-exceptional divisors $B^+,B^-$. Then we have 
\begin{align*}
\kappa(Y',-K_{Y'}-\mu^*D+N)&=\kappa(Y',\mu^*(-K_Y-D)+N+B^--B^+)\\
 &\leq \kappa(Y',\mu^*(-K_Y-D)+N+B^-)\\
 &=\kappa(Y',\mu^*(-K_Y-D))\\
 &= \kappa(Y,-K_Y-D)
\end{align*}
 since $N$ and $B$ are $\mu$-exceptional, hence the proof completes.
\end{proof}


\begin{thebibliography}{}
\bibitem[A04]{A} F. Ambro, {\em Shokurov’s boundary property}, J. Differential Geom. {\bf 67} (2004), no. 2, 229-255.

\bibitem[A05]{A2} F. Ambro, {\em The moduli b-divisor of an lc-trivial fibration}, Compos. Math. {\bf  141} (2005), no. 2, 385–403. 

\bibitem[AO00]{AO} D. Abramovich and F. Oort, {\em Alterations and resolution of singularities, in Resolution of singularities} (Springer, Basel, 2000), 39–108. 

\bibitem[BBP13]{BBP} S. Boucksom, A. Broustet, G. Pacienza, {\em Uniruledness of stable base loci of adjoint linear systems via Mori theory}, Mathematische Zeitschrift, 2013, {\bf 275(1-2)}: 499-507.

\bibitem[BDPP13]{BDPP} S. Boucksom, J. P. Demailly, M. P\v aun, and T. Peternell, {\em The pseudo-effective cone of a compact K\"ahler manifold and varieties of negative Kodaira dimension}, J. Algebraic Geom. {\bf 22} (2013), no. 2, 201–248. MR 3019449 13, 14, 21 

\bibitem[BKKMSU15]{BKKMSU} T. Bauer, S. J. Kov\'acs, A. K\"uronya, E. C. Mistretta, T. Szemberg, and S. Urbinati, {\em On positivity and base loci of vector bundles}, Eur. J. Math. 1 (2015), no. {\bf 2}, 229–249. MR 3386236 5, 28, 29, 30 

\bibitem[C04]{C} F. Campana, {\em Orbifolds, special varieties and classification theory}, Ann. Inst. Fourier {\bf 54} (2004), 499–630. 

\bibitem[CCM19]{CCM} F. Campana, J. Cao, S. Matsumura, {\em Projective klt pairs with nef anti-canonical divisor,} Algebraic Geometry, {\bf 8} (2021), no. 4, 430–464.

\bibitem[CDiB13]{CDiB} S. Cacciola, L. Di Biagio, {\em Asymptotic base loci on singular varieties}. Mathematische Zeitschrift, {\bf 275} (2013), no. 1, pp 151-166. 

\bibitem[CZ13]{CZ} M. Chen and Q. Zhang, {\em On a question of Demailly–Peternell–Schneider}, J. Eur. Math. Soc. {\bf 15} (2013), 1853–1858.

\bibitem[Deb01]{Deb} O. Debarre, {\em Higher-dimensional algebraic geometry}, Universitext. Springer-Verlag, New York, 2001. 

\bibitem[Den17]{Den} Y. Deng. {\em Applications of the Ohsawa-Takegoshi Extension Theorem to Direct Image Problems}, Preprint (2017), arXiv:1703.07279. To appear in Int. Math. Res. Not. IMRN.

\bibitem[DPS01]{DPS} J. Demailly, T. Peternell, M. Schneider, {\em Pseudo-effective line bundles on compact K\"ahler manifolds}, Internat. J. Math {\bf 12} (2001), 689–741.

\bibitem[EG19]{EG} S. Ejiri, Y. Gongyo, {\em Nef anti-canonical divisors and rationally connected fibrations}, Compos. Math. {\bf 155} (2019), 1444–1456.

\bibitem[EIM20]{EIM} S. Ejiri, M. Iwai and S. Matsumura, {\em On asymptotic base loci of relative anti-canonical divisor of algebraic fibre space }, Preprint (2020), arXiv:2005.04566. 

\bibitem[ELMNP06]{ELMNP} L. Ein, R. Lazarsfeld, M. Musta\c t\v a, M. Nakamaye, and M. Popa, {\em Asymptotic invariants of base loci.} Ann. Inst. Fourier, {\bf 56} (2006), 1701-1734.

\bibitem[F78]{F78} T. Fujita, {\em On K\"ahler fiber spaces over curves}, J. Math. Soc. Japan, {\bf 30} (1978), no. 4,779–794.

\bibitem[F12]{F12} O. Fujino, {\em Basepoint-free theorems: saturation, b-divisors, and canonical
bundle formula}, Algebra Number Theory {\bf 6} (2012), no. 4, 797–823.

\bibitem[F17]{F17} O. Fujino, {\em Notes on the weak positivity theorems}, In {\em Algebraic Varieties and Automorphism Groups}, 73–118, Adv. Stud. Pure Math., {\bf 75} (2017), Math. Soc. Japan, Tokyo.

\bibitem[FG12]{FG12} O. Fujino and Y. Gongyo, {\em On images of weak Fano manifolds}, Math. Zeit. {\bf 270} (2012), no. 1-2, p. 531-544.

\bibitem[FG14]{FG14} O. Fujino and Y.Gongyo, {\em On the moduli b-divisors of lc-trivial fibrations}, Annales de l'institut Fourier, {\bf 64} no. 4 (2014), 1721--1735.

\bibitem[K81]{K81} Y. Kawamata, {\em Characterization of abelian varieties}, Compos. Math., {\bf 43} (1981), no. 2,
253–276

\bibitem[K97]{K97} Y. Kawamata, {\em Subadjunction of log canonical divisors for a variety of codimension 2}, Contemporary Mathematics {\bf 207} (1997),79–88.

\bibitem[K98]{K98} Y. Kawamata, {\em Subadjunction of log canonical divisors}, II, Amer. J. Math. {\bf 120} (1998), 893–899. 

\bibitem[KM98]{KM98} J. Kollar and S. Mori, {\em Birational geometry of algebraic varieties,} Cambridge Tracts in Math.,134 (1998).

\bibitem[La04]{La} R.Lazarsfeld, {\em Positivity in algebraic geometry,} Vols. I and II, Springer, New York, 2004.

\bibitem[Le13]{L} J. Lesieutre, {\em The diminished base locus is not always closed}, Compos. Math. {\bf 150} (2014), no. 10, 1729–1741.

\bibitem[Ma14]{M} D. Matsushita, {\em On almost holomorphic Lagrangian fibrations}, Math. Annalen, {\bf 358} (2014), no. 3, 565-572.

\bibitem[Mo85]{Mo} S. Mori, {\em Classification of higher-dimensional varieties,} Algebraic geometry, Bowdoin, 1985 (Brunswick, Maine, 1985), Proc. Sympos. Pure Math., {\bf 46}, Amer. Math. Soc., Providence, RI, 1987, pp. 269–331.

\bibitem[N04]{N}  N. Nakayama, {\em Zariski-decomposition and abundance}, MSJ Memoirs, {\bf 14}, Math. Soc. Japan, 2004.

\bibitem[Stacks]{S} The {Stacks Project Authors}, {\em Stacks Project}, {\url https://stacks.math.columbia.edu}, 2018.

\bibitem[V83]{V} E. Viehweg, {\em Weak positivity and the additivity of the Kodaira dimension for certain fiber spaces}, in Algebraic varieties and analytic varieties (Mathematical Society of Japan, Tokyo, 1983), 329–353. 

\end{thebibliography}
\end{document}